\documentclass{article}
\usepackage[utf8]{inputenc}
\usepackage[T1]{fontenc}
\usepackage{amsmath,amssymb,amsthm,bbm,float,graphicx,geometry,mathtools,parskip,setspace,subcaption, lmodern}
\usepackage{mathrsfs}
\usepackage{enumerate}
\usepackage{nicefrac}
\usepackage{todonotes}
\usepackage{comment}
\usepackage[colorlinks, linkcolor = blue!80!black, citecolor = blue!80!black, breaklinks, pdfauthor={Benedikt Jahnel, Lukas Luechtrath, Peter Gracar, Anh Duc Vu}]{hyperref}
\usepackage{orcidlink}
\usepackage{titling}
\usepackage{url}
\usepackage{tikz}
\usetikzlibrary{arrows.meta}
\usetikzlibrary{calc}
\usepackage{dsfont}

\usepackage{authblk}

\usepackage{tikz}
\usetikzlibrary{backgrounds}
\usetikzlibrary{patterns}
\usetikzlibrary{positioning, shapes.geometric}

\usepackage{caption}
\usepackage{graphicx}
\usepackage{subcaption}
\usepackage{graphicx}
\graphicspath{{Images/}}%put any images in this file
\allowdisplaybreaks%breaks long equations onto multiple pages is needed
\newcommand{\N}{\mathbb{N}}
\DeclareMathOperator{\argmax}{arg \ max}
\usepackage{nameref}

\usepackage{cleveref}

\newtheorem{theorem}{Theorem}[section]
\crefname{theorem}{theorem}{theorems}
\newtheorem{lemma}[theorem]{Lemma}
\crefname{lemma}{lemma}{lemmas}

\crefname{corollary}{corollary}{corollaries}
\newtheorem{prop}[theorem]{Proposition}
\crefname{prop}{proposition}{propositions}

\crefname{conjecture}{conjecture}{conjectures}

\crefname{assumption}{assumption}{assumptions}

\newtheorem{definition}[theorem]{Definition}
\crefname{definition}{definition}{definitions}
\newtheorem{remark}[theorem]{Remark}
\crefname{remark}{remark}{remarks}

\makeatletter
\let\orgdescriptionlabel\descriptionlabel
\renewcommand*{\descriptionlabel}[1]{%
  \let\orglabel\label
  \let\label\@gobble
  \phantomsection
  \edef\@currentlabel{#1}%
  \let\label\orglabel
  \orgdescriptionlabel{#1}%
}

\renewcommand{\P}{\mathbb{P}}
\newcommand{\x}{\mathbf{x}}

\newcommand{\1}{\mathbbm{1}}
\newcommand{\G}{\mathscr{G}}
\newcommand{\C}{\mathscr{C}}

\newcommand{\E}{\mathbb{E}}

\newcommand{\R}{\mathbb{R}}
\renewcommand{\d}{\mathrm{d}}

\renewcommand{\d}{\mathrm{d}}

\newcommand{\Ecal}{\mathcal{E}}

\newcommand{\Ccal}{\mathcal{C}}
\newcommand{\eps}{\varepsilon}
\newcommand{\cP}{\mathcal{P}}

\newcommand{\Hcal}{\mathcal{H}}

\newcommand{\perc}{\mathrm{perc}}

\renewcommand{\Cap}{\mathrm{Cap}}
\newcommand{\dx}{\mathrm{d}}
\newcommand{\I}{\mathds{1}}

\pretitle{\centering\LARGE\scshape}
 \posttitle{\vskip 0.75cm}

 \predate{\vskip 0.5 cm \centering\large}
 \postdate{\par}

\usepackage[style = numeric, sorting=nyt, url = false, abbreviate=false, maxbibnames=9, sortcites=true, doi = true, backend = biber, giveninits = true, isbn=false]{biblatex}
\renewbibmacro{in:}{\ifentrytype{article}{}{\printtext{\bibstring{in}\intitlepunct}}}
\DeclareFieldFormat{journaltitle}{\mkbibemph{#1\isdot}}
\title{Detection, coverage and percolation in dynamic Boolean models with random radii based on $\alpha$-stable processes}
\author[1]{Peter Gracar\thanks{Email: p.gracar@leeds.ac.uk; \url{https://orcid.org/0000-0001-8340-8340}}}
\author[2,3]{Benedikt Jahnel\thanks{Email: benedikt.jahnel@tu-braunschweig.de; \url{https://orcid.org/0000-0002-4212-0065}}}
\author[3]{Lukas L\"uchtrath\thanks{Email: lukas.luechtrath@wias-berlin.de; \url{https://orcid.org/0000-0003-4969-806X}}}
\author[3]{Anh Duc Vu\thanks{Email: anhduc.vu@wias-berlin.de; \url{https://orcid.org/0009-0005-6913-4992}}}

\affil[1]{\small School of Mathematics, University of Leeds, Leeds LS2 9JT, UK}
\affil[2]{Technische Universit\"at Braunschweig, Braunschweig, Germany}
\affil[3]{Weierstrass Institute for Applied Analysis and Stochastics, Berlin, Germany}

\begin{document}
\date{\today}
\maketitle

\begin{abstract}\noindent
We consider a dynamic network in continuum time and space in which nodes, with initial locations given by a Poisson point process, move according to i.i.d.\ isotropic $\alpha$-stable processes. Each node is additionally equipped with an i.i.d.\ detection radius. Inspired by corresponding results by Peres et.~al.\ on mobile networks based on Brownian sausages with fixed width, we investigate the tail behaviour of three stopping times: The detection time of the first discovery of a designated node, the first coverage of an entire set, and the first discovery of a node by the infinite connected component of the system. Broadly speaking, we discover that the stability index as well as the random radii manifest themselves only in constants in the otherwise exponential decay rates. The proofs rest on heat-kernel bounds for the underlying Lévy processes and a detailed multiscale analysis allowing us to control the space-time correlations of the system. 

\bigskip
\footnotesize{
\noindent\textbf{AMS-MSC 2020}: Primary: 60D05; Secondary: 60K35

\medskip 

\noindent\textbf{Keywords}: mobile geometric graph, dynamic Boolean model, percolation time, detection time, coverage time, Lévy process, $\alpha$-stable process, jump process, Poisson point process, multiscale analysis 
}
\end{abstract}

\pagebreak

\section{Introduction}\label{sec:intro}
Mobile geometric graphs are fundamental models for dynamic spatial networks, in which nodes are distributed in space and form edges with nearby nodes while moving according to some stochastic dynamics~\cite{van1997dynamic,diaz2009large,steif2009survey}. Understanding how mobility and connectivity interact is crucial in many applications, including wireless sensor networks, opinion dynamics, and epidemiology, see for example~\cite{kesidis2003surveillance,gupta1998critical,grossglauser2002mobility,liu2005mobility,ferri2023three,grimmett2022brownian,baldasso2022local}.

A rigorous mathematical framework is introduced in~\cite{Peres2011}, where nodes are initially placed according to a homogeneous Poisson point process in $\mathbb{R}^d$ and move independently according to {\em Brownian motions}. Nodes are connected if their Euclidean distance is less than a fixed radius $r>0$. They studied three fundamental quantities: \emph{detection time}, the first moment when a target (fixed or mobile) is within range of some node; \emph{coverage time}, the time until a finite region has been fully explored by the nodes; and \emph{percolation time}, the time until a typical node becomes part of the infinite connected component. Using stochastic geometry, multiscale coupling, and concentration techniques, they derive dimension-dependent asymptotics for the upper-tail probabilities of these times for a static or dynamic typical network participant. 

In parallel, networks based on \emph{Lévy jump processes}, or Lévy flights, have attracted attention due to their heavy-tailed displacement distributions and scale-invariance properties, which provide good fits to a large variety of behaviours observed in nature and engineering~\cite{gonzalez2008understanding,rhee2011levy,sims2008scaling,gan2021levy}.

In this work, we combine these strands by considering a dynamic geometric network, in which nodes follow Lévy jump processes rather than Brownian motions and radii are i.i.d.\ {\em random variables} instead of being globally fixed. This setting captures both heavy-tailed mobility and heterogeneous interaction ranges, which are common in real-world networks such as wireless communication systems and animal movement networks. By extending multiscale coupling and stochastic-geometric techniques to this richer model, we characterise the asymptotic behaviour of detection, coverage, and percolation times, generalising the findings in~\cite{Peres2011} to a more realistic non-Gaussian and heterogeneous setting.
%, and provide insights into how heavy-tailed dynamics and spatial heterogeneity jointly influence the temporal evolution of connectivity.

The manuscript is organised as follows. In \Cref{sec:setting} we first present the setting and some basic properties of the underlying processes that we will frequently make use of. Then, we exhibit our main findings on the detection, coverage, and percolation times including some case studies and provide an outlook for possible future research directions. The proofs are presented in \Cref{sec:proofs}.

%%%%%%%%%%%%%%%%%%%%%%%%%%%%%%%%%%%%%%%%%%%%%%%%%%%%%%%%%%%%%%%%%%%%%%%%
\section{Setting and main results}\label{sec:setting}
Fix the dimension $d\ge 1$, an intensity $\lambda>0$ and a stability index $\alpha\in(0,2]$. At time $t=0$, the node locations $\Psi_0=\{\Psi^i\}_{i\in \N}$ form a homogeneous {\em Poisson point process} of intensity~$\lambda$ on $\R^{d}$. We equip each node $\Psi^i$, \(i\in\N\), with an i.i.d.\ copy $R_i$ of a {\em communication radius} \(R\) that satisfies
      \begin{equation}\label{eq:moments}
         0 < \E R^d<\infty.
      \end{equation} 

Next, let $X=(X_t)_{t\geq0}$ be the \emph{standard $d$-dimensional isotropic $\alpha$-stable process} started in the origin $o\in\R^d$. That is, the {\em Lévy process} with characteristic function given by the  $\alpha$-stable law
\[
	\mathbb{E}\exp(\mathrm{i}\langle\xi,X_t\rangle)
          =\exp(-t\lvert\xi\rvert^{\alpha}),\qquad
         \forall \xi\in\mathbb{R}^{d},
\]
with {\em stability index} $\alpha\in(0,2]$. 
Recall that, as a Lévy process, it has stationary, independent increments and càdlàg paths and obeys the self-similarity (scaling) relation
\begin{equation}\label{eq:self-similarity-alpha-stable}
   (X_{ct})_{t\ge 0}
   \stackrel{d}{=}
   (c^{1/\alpha} X_t)_{t\ge 0}, \qquad \forall c>0. 
\end{equation}
Consequently, for \(\alpha<2\), each marginal $X_t$ has polynomial tails,
$\mathbb{P}(|X_t|>x)\sim C_\alpha t x^{-\alpha}$, as $x\to\infty$ for some $C_\alpha>0$,
so in particular $\operatorname{Var}|X_t|=\infty$ and, in case $\alpha \le  1$, even $\E|X_t|=\infty$.
The generator of the isotropic $\alpha$-stable process is the fractional Laplacian $-(-\Delta)^{\alpha/2}$ so that the $\alpha=2$ case corresponds to a standard Brownian motion at twice the speed.

We now equip the nodes of the Poisson point process \(\Psi_0\) with i.i.d.\ copies \(\{Y^i\}_{i\in\N}\) of \(X\) and define \(X^i:=Y^i+\Psi^i\). That is, each node of \(\Psi_0\) moves independently according to a standard \(\alpha\)-stable process. We denote the collection of all locations at time \(t\) by \(\Psi_t=\{X^i_t\}_{i\in \N}\). Thus,  \(\Psi_t\) has the law of a homogeneous Poisson point process with $\lambda>0$. Note that the associated communication radii do not change over time. The main object of investigation is the \emph{dynamic Boolean model} $\G=(\G_t)_{t\ge 0}$, given by
\[
    \G_t := \bigcup_{i\geq1} B_{R_i}(X_t^i)\subset\R^d,\qquad t\geq0,
\]
where $B_r(x)$ denotes the ball with radius $r\ge 0$ centred at $x\in \R^d$,
see \Cref{fig:process} for an illustration. 
The moment bound on the radius guarantees that this model is nontrivial, i.e.\ $\G_t\neq\R^d$ at each point in time \(t\geq 0\), see~\cite{Meester1996a, van1997dynamic}.
\begin{figure}[htbp]
        \includegraphics[width=\linewidth]{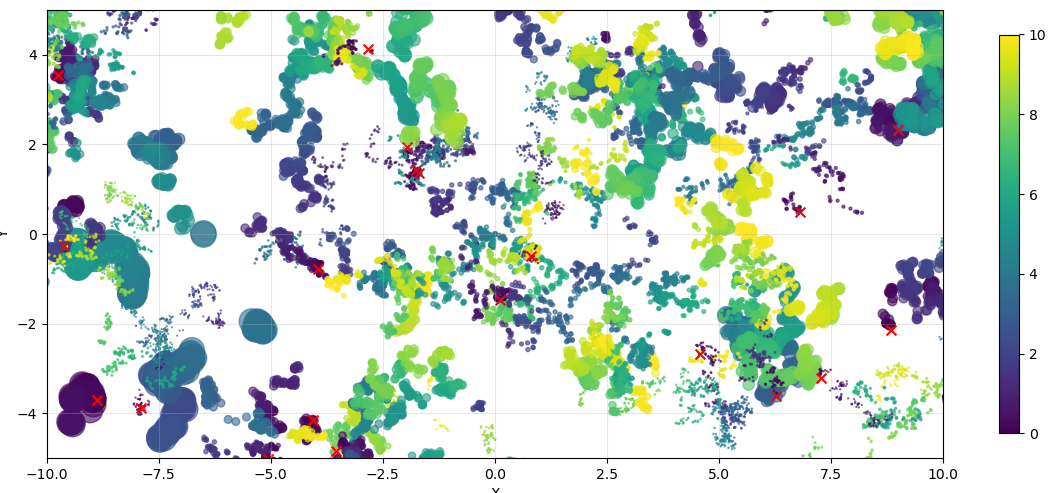}
    \caption{
    A realisation of the dynamic Boolean model $\G$ based on the standard isotropic $\alpha$-stable process with $\alpha= 1.5$. Colour indicates time. As a Boolean model of a stationary Poisson point process, $\G_t$ has the same distribution as $\G_0$ for any $t\geq0$. Plotted is $(\G_t)_{t\in[0,10]}$. The starting configuration is depicted by red crosses.}
    \label{fig:process}
\end{figure}

\subsection{Detection times}
Our first main result is concerned with the tail-behaviour of the first time that the dynamic Boolean model detects a target particle that is initially placed at the origin $o\in \R^d$. More precisely, we are interested in the {\em detection time}, defined as
\begin{equation}\label{eq:dettime}
T_\mathrm{det}^g:=\inf\big\{t\ge 0\colon g(t)\in\G_t\big\}=\inf\big\{t\ge 0\colon |g(t)-X^i_t|\le R_i\text{ for some }i\ge 1\big\},
\end{equation}
where $g\colon [0,\infty)\to\R^d$ with $g(0)=o$ is some measurable function that represents the movement of the node that is to be detected.

In order to describe the exponential rate of decay of the tail probability of $T_\mathrm{det}^g$, consider $\Gamma_t\subset\R^d$, the {\em trajectory} of the Lévy process $X$ up to time $t$ and shifted by $g$, that is,
\[
	\Gamma^g_t:=\bigcup_{s\leq t}\{X_s + g(s)\},
\]
where we set $\Gamma_t := \Gamma^o_t$. Furthermore, consider {\em $r$-annuli} around Lebesgue-measurable sets $A\subset\R^d$, defined as
\[
	B_r(A):=\big\{x\in\R^d\colon |x-a|<r\text{ for some }a\in A\big\},
\]
writing $|A|$ for the $d$-dimensional Lebesgue-volume of $A\subset\R^d$. 

The probabilistic tail of the detection time can be expressed in terms of the volume of the Lévy sausage up to time $t$, for which we refer to the following asymptotics, cf.~\cite[Theorem~1~\&~2]{Getoor1965}, 
\begin{equation}\label{eq:convergence-volume-capacity}
        \lim_{t\uparrow\infty}\tfrac1t \E|B_r(\Gamma_t)|=r^{d-\alpha}\Cap{(\alpha,d)}, \qquad r\ge 0,
\end{equation}
 where
\[
    \Cap{(\alpha,d)} := 2^\alpha \pi^{d/2}\Gamma(\alpha/2)/\Gamma((d-\alpha)/2)
\]
    with $\Gamma(\cdot)$ being the gamma function. 
    Moreover, the function $t\mapsto \E|B_r(\Gamma_t)| - tr^{d-\alpha}\Cap{(\alpha,d)}$ is monotonically increasing.
        
The following first result features the asymptotics of the upper tails for the distribution of the detection time. We highlight that randomising the initial radii does not change the asymptotic behaviour in a qualitative way. This mainly follows from the fact that the volume asymptotics in~\eqref{eq:convergence-volume-capacity} are continuous in the radius.
\begin{theorem}[Detection time]\label{thm:detection-time}
Consider the dynamic Boolean model with $\alpha\in(0,2]$ and $d>\alpha$.
    \begin{enumerate}[(i)]
        \item 
        	If $g\equiv o$, then
        	\[
        		\P(T_\mathrm{det}^o>t) = \exp\big(-t\lambda \Cap{(\alpha,d)} \E R^{d-\alpha}(1+o(1))\big),
        	\]
        	as $t\to\infty$ and $o(1)\geq0$.
        \item 
        	There exists $C\in(0,1]$ depending only on $\alpha$ and $d$ such that for every measurable $g$
    		\[
    			\P(T_\mathrm{det}^g>t) \leq \exp\big(-Ct\lambda \Cap{(\alpha,d)} \E R^{d-\alpha}\big),
    		\]
            for all $t\ge 0$.
        \item 
        	If \(g\) is the realisation of an independent Lévy process $Y$, then for the annealed probability,
        	\[
        		\lim_{t\uparrow\infty}\tfrac{1}{t}\log\P\big(T_\mathrm{det}^Y>t\big) = \sup_{t\geq 1}\tfrac{1}{t}\log\P\big(T_\mathrm{det}^Y>t\big)\in (-\infty,0),
        	\]
        	i.e., a non-trivial exponential rate exists.
        \item 
        	If $g_\beta(t):=\beta t\psi$ for some $\psi\in\R^d\backslash\{o\}$ and $\beta>0$, then
			\[
				\lim_{\beta\uparrow\infty}\lim_{t\uparrow\infty}\tfrac1t\log\P(T^{g_\beta}_{\rm det}>t)=-\infty,
			\]
        	meaning that a very fast particle is detected fast.
    \end{enumerate}
\end{theorem}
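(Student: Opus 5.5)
The starting point for all four parts is the standard Poisson thinning and mapping computation. Conditioned on the target path $g$, the event $\{T^g_{\rm det}>t\}$ says that no node $i$ has $X^i_s\in B_{R_i}(g(s))$ for some $s\le t$. Equivalently, $\Psi^i\notin B_{R_i}(g(s)-Y^i_s)$ for all $s\le t$. Because the marks $(Y^i,R_i)$ are i.i.d.\ and independent of $\Psi_0$, the points whose mark makes them detect $g$ form a Poisson process. Its intensity is $\lambda\,\E|B_R(\Gamma^{-g}_t)|$, where by isotropy $-X\stackrel{d}{=}X$ and we may write the trajectory as $\{X_s-g(s)\}$. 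This gives the exact formula
\[
\P(T^g_{\rm det}>t)=\exp\big(-\lambda\,\E\big[|B_R(\Gamma^{g'}_t)|\big]\big),\qquad g'=-g,
\]
where the expectation is over an independent pair $(X,R)$. For (iii) the same formula holds conditionally on $Y$, and we average over $Y$ afterwards.

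\emph{Part (i).} Conditioning on $R=r$ and using \eqref{eq:convergence-volume-capacity}, we have $\tfrac1t\E|B_r(\Gamma_t)|\to r^{d-\alpha}\Cap(\alpha,d)$. The monotonicity statement gives $\E|B_r(\Gamma_t)|\ge tr^{d-\alpha}\Cap(\alpha,d)+\E|B_r(\Gamma_0)|\ge tr^{d-\alpha}\Cap(\alpha,d)$, which yields the sign $o(1)\ge0$ once we take the exponent in the form $-(\dots)(1+o(1))$. To exchange limit and expectation over $R$, I would use dominated convergence with a bound $\tfrac1t\E|B_r(\Gamma_t)|\le C(r^{d-\alpha}+r^d)$, uniform in $t\ge1$, which is integrable by \eqref{eq:moments}. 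This bound follows from subadditivity: cover $[0,t]$ by $\lceil t\rceil$ unit intervals, so that $\E|B_r(\Gamma_t)|\le\lceil t\rceil\,\E|B_r(\Gamma_1)|$. Scaling \eqref{eq:self-similarity-alpha-stable} gives $\E|B_r(\Gamma_1)|=r^d\,\E|B_1(\Gamma_{r^{-\alpha}})|$, which is $O(r^d)$ for $r\ge1$ and $O(r^{d-\alpha})$ for $r\le1$ by \eqref{eq:convergence-volume-capacity}. (Note that $\E R^{d-\alpha}\le 1+\E R^d<\infty$.)

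\emph{Part (ii).} This is the main obstacle: we need a lower bound $\E|B_r(\Gamma^g_t)|\ge C\,t\,r^{d-\alpha}\Cap(\alpha,d)$ that is uniform over measurable $g$. I would adapt the Peres et al.\ argument via the second-moment (Cauchy--Schwarz) method on occupation times. For a point $x$, let $L_x=\int_0^{t}\1\{|X_s+g(s)-x|<r\}\,ds$. Then
\[
\P(x\in B_r(\Gamma^g_t))\ge \frac{(\E L_x)^2}{\E L_x^2}.
\]
We have $\int \E L_x\,dx=t|B_r|$. The second moment is controlled by the Markov property and a heat-kernel bound $p_s(y)\le c\min(s^{-d/\alpha},s|y|^{-d-\alpha})$, whose supremum over positions gives $\sup_y\P(X_s\in B_r(y))\le c\min(1,r^ds^{-d/\alpha})$. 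This bound is independent of $g$, since the drift only shifts the ball. It yields $\E L_x^2\le 2\E L_x\int_0^t c\min(1,r^ds^{-d/\alpha})\,ds\le c' r^\alpha\E L_x$ because $d>\alpha$. Combining with Jensen gives $\E|B_r(\Gamma^g_t)|\ge \int(\E L_x)^2/\E L_x^2\,dx\ge t|B_r|/(c'r^\alpha)=c''tr^{d-\alpha}$. This holds for all $t\ge r^\alpha$; the remaining $t$ are handled using the trivial volume $|B_r|$. The small-$t$ regime and the constant comparison need some care, which is why the statement allows an unspecified $C\le1$. Finally, integrate over $R$.

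\emph{Part (iii).} Set $a_t=-\log\P(T^Y_{\rm det}>t)$. Independence of increments of $(X^i)$ and $Y$, together with the Poisson structure, should give $\P(T>t+s)\ge\P(T>t)\P(T>s)$. I would obtain this either via FKG/Harris for the decreasing events in the Poisson configuration and in $Y$ viewed as independent increments, or, more directly, from the formula $\exp(-\lambda\E[\cdot])$ conditional on $Y$: the sausage volume is subadditive over time intervals and $Y$ has independent increments. This gives subadditivity of $a_t$, and Fekete's lemma yields the limit. The lower bound on the rate is $>-\infty$ because $a_t\le\lambda\,\E|B_R(\Gamma^{Y'}_t)|\le Ct$ by the unit-interval covering argument. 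The rate is $<0$ by (ii), which holds pathwise in $Y$.

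\emph{Part (iv).} From the exact formula, it suffices to show $\liminf_t \tfrac1t\E|B_R(\Gamma^{g_\beta}_t)|\to\infty$ as $\beta\to\infty$. On each unit time interval the path $X_s-\beta s\psi$ sweeps a tube of length of order $\beta$ in the direction $\psi$, except on a set of small probability. Distinct intervals sweep essentially disjoint regions, since the drift dominates the $\alpha$-stable fluctuations, whose median size at time $1$ is $O(1)$. Hence $\tfrac1t\E|B_r(\Gamma^{g_\beta}_t)|\gtrsim \beta r^{d-1}$, and integrating over $R$ (restricted to $R$ in a set of positive probability) gives the divergence.
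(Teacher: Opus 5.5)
The gap is in (iv). What (iv) needs is $\E|B_r(\Gamma^{g_\beta}_t)|\ge c_\beta t$ for all large $t$, with $c_\beta\to\infty$. The tube picture only describes what happens inside one unit time interval. Your decisive claim is that distinct intervals sweep essentially disjoint regions because the drift dominates fluctuations whose median at time $1$ is $O(1)$. This does not control the expected overlap created when $X_s-\beta s\psi$ returns near an earlier position after an arbitrarily long time lag, summed over all pairs of intervals in $[0,t]$.

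For $\alpha<1$ the premise is actually false on long time scales. Over a lag $v$ the stable displacement is of order $v^{1/\alpha}$, which exceeds the drift displacement $\beta v$ once $v\gg\beta^{\alpha/(1-\alpha)}$, so the path is not ordered along $\psi$ at all. For every $\alpha<2$, a median bound says nothing about the heavy-tailed jumps that produce such returns. What really rules out substantial overlap is transience, i.e.\ integrability of return probabilities because $d>\alpha$, and this never enters your sketch of (iv).

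The missing step is already in your proof of (ii). For $g_\beta$ the increment $g_\beta(u)-g_\beta(s)=\beta(u-s)\psi$ is deterministic. So the Markov property at time $s$ gives $\E L_x^2\le 2\,\E L_x\, I_\beta(r)$, with $I_\beta(r):=\int_0^\infty\P\big(X_v\in B_{2r}(\beta v\psi)\big)\,\mathrm{d}v$, and no supremum over centres is needed. Then $I_\beta(r)\to0$ as $\beta\to\infty$ by dominated convergence. Pointwise convergence follows from tightness of $X_v$, and $\min\{1,c\,r^dv^{-d/\alpha}\}$ is an integrable majorant since $d>\alpha$. Hence $\E|B_r(\Gamma^{g_\beta}_t)|\ge t|B_r(o)|/(2I_\beta(r))$ for all $t$, and Fatou over $R$ (using $\P(R>0)>0$) gives the rate $-\infty$. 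This is exactly the paper's argument, phrased there via $\P(Z>0)=\E Z/\E[Z\mid Z>0]$ and the strong Markov property at the hitting time.

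Your Paley--Zygmund version of (ii) is a legitimate alternative to that route. It uses the Markov property only at deterministic times and takes a supremum of the heat kernel. It therefore avoids hitting times of a merely measurable space-time path and needs no unimodality, at the price of a slightly worse constant. The restriction $t\ge r^\alpha$ there is unnecessary, since $\int_0^t\le\int_0^\infty$.

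A smaller point in (iii): your bound $a_t\le Ct$ via Jensen and unit intervals needs $\E|B_R((X+Y')[0,1])|<\infty$ for an arbitrary Lévy process $Y'$. Getoor's asymptotics for $X$ alone do not give this. It is true (e.g.\ by a strong-Markov occupation-time bound), but it is simpler to argue as the paper does. Since the limit equals $\sup_{t\ge1}$, it suffices that $\P(T^Y_{\mathrm{det}}>1)>0$. This follows by confining $Y[0,1]$ to a ball $B_M(o)$ and enlarging the radii by $M$.
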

\begin{remark}\label{rem:volume-constant}
 Due to a rearrangement inequality~\cite[Theorem 1.5 \& Corollary 1.7]
{drewitz2014symmetric}, the statement in Part $(ii)$ holds for $C=1$ in the case where $g$ is such that, for all $s\ge 0$, there exists $\delta>0$ such that $\sup_{s\le t<s+\delta}|g(s)-g(t)|<\delta$. In particular, this holds if $g$ is càdlàg.    
\end{remark}
We give the proof of the theorem in \Cref{subsec:proof-detection-time}, which rests on the following first key input. 
Let $T_\mathrm{det}^g(K)$ be the random time until some point of a set $K\subset\R^d$ is detected, i.e.,
    $$T_\mathrm{det}^g(K) := \inf\big\{t\geq0 \colon (K+g(t))\cap \G_t \neq \emptyset \big\},
    $$
    where, for two sets $A,B\subset\R^d$, we consider the \emph{Minkowski sum}
    $$A+B := \{a+b \colon a\in A,\,b\in B\}.$$
 Note that $T_\mathrm{det}^g=T_\mathrm{det}^g(\{o\})$. Then, detection-time probabilities are Poisson void probabilities as stated in the following lemma that we prove in \Cref{subsec:proof-detection-time}. 
\begin{lemma}\label{lem:detection-time-is-void-probability}
    Let $K\subset\R^d$ be a compact set, which will move according to some deterministic measurable motion $g\colon [0,\infty)\to\R^d$.
    Then,
    \[
    \P(T_\mathrm{det}^g(K)>t) = \exp(-\lambda \E|B_R(\Gamma^g_t)+K|).
    \]
\end{lemma}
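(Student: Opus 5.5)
The plan is to write the event $\{T_\mathrm{det}^g(K)>t\}$ as the event that no point of a suitably marked Poisson process falls into a set determined by its marks, and then apply the marking (colouring) theorem for Poisson processes. Mark each initial node $\Psi^i$ with the pair $(R_i,Y^i)$, where $Y^i$ is a càdlàg path in the Skorokhod space $D([0,\infty),\R^d)$. By the marking theorem, $\{(\Psi^i,R_i,Y^i)\}_{i}$ is a Poisson point process on $\R^d\times[0,\infty)\times D$ with intensity $\lambda\,\d x\otimes \P_R\otimes \P_X$.

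Node $i$ detects $K+g(s)$ at time $s\le t$ iff $|\Psi^i+Y^i_s - g(s)-k|\le R_i$ for some $k\in K$. Equivalently, $\Psi^i\in \overline B_{R_i}\big(g(s)+K-Y^i_s\big)$, where $\overline B$ denotes the closed ball. Taking the union over $s\le t$ gives
\[
\{T_\mathrm{det}^g(K)>t\}\ \text{``}=\text{''}\ \big\{\Psi^i\notin A(R_i,Y^i)\ \forall i\big\},\qquad A(r,y):=\bigcup_{s\le t}\overline B_r\big(g(s)-y_s+K\big).
\]
The quotation marks indicate that an infimum at exactly $t$ and the difference between open and closed balls need care; I return to this below.

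The void probability of a Poisson process with the above intensity is
\[
\exp\Big(-\lambda\int \big|A(r,y)\big|\,\P_R(\d r)\,\P_X(\d y)\Big).
\]
By isotropy (symmetry) of $X$, the process $-Y$ has the law of $X$. Hence $\E|A(R,Y)|=\E\big|\overline B_R(\Gamma^g_t)+K\big|$, and we have $\overline B_r(\Gamma^g_t)+K=\overline B_r(\Gamma^g_t+K)$. Measurability of $(x,r,y)\mapsto \1\{x\in A(r,y)\}$ also needs checking. For this, note that the event can be written through $\inf_{s\le t}\operatorname{dist}\big(x-g(s)+y_s,K\big)\le r$. Since $g$ is only measurable, I would rather argue via the completed product $\sigma$-algebra, or approximate the time union by countable dense sets. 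Measurable projection (the section theorem) gives universal measurability, which suffices.

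The main obstacle is the boundary mismatch: the open $R$-annulus $B_R(\cdot)$ in the statement versus the closed detection condition $|\cdot|\le R_i$. A related issue is that the infimum in $T_\mathrm{det}$ may not be attained, so $\{T>t\}$ and ``no detection on $[0,t]$'' can differ. I would handle both by showing that the symmetric differences have expected Lebesgue measure zero.

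First, $|\overline B_r(A)\setminus B_r(A)|=0$ for any set $A$ and $r>0$. This follows because the boundary of an $r$-parallel set is Lebesgue-null: parallel sets have finite perimeter locally, or one can use monotone continuity of $r\mapsto |B_r(A)\cap B_n|$ off a countable set combined with $\overline B_r(A)\subset B_{r+\varepsilon}(A)$. The case $r=0$ contributes zero volume when $K$ is small, but then the formula still matches, since the trajectory has zero volume for $d>\alpha$. If needed I would note this case separately. Second, since $g$ is merely measurable, whether $\{T>t\}$ equals ``no hit on $[0,t]$'' or ``no hit on $[0,t)$'' differs only on a null event. Indeed, for a fixed $t$ the Poisson process almost surely has no node hitting exactly at a boundary time with zero measure margin, which one sees by applying the same void-probability computation with $t$ replaced by $t\pm\varepsilon$ and letting $\varepsilon\downarrow0$ using continuity of $t\mapsto\E|B_R(\Gamma^g_t)+K|$. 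If that continuity fails for a pathological $g$, I would define things via the closure convention so that equality holds directly.
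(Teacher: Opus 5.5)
Your main computation is the paper's proof. The paper observes that the nodes detecting $K+g(\cdot)$ during $[0,t]$ form an independent thinning of $\Psi_0$, hence a Poisson process with density $x\mapsto\lambda\P\big(x\in B_R(\Gamma^g_t)+K\big)$ after using the symmetry of $X$, and then takes its void probability. Your marked process on $\R^d\times[0,\infty)\times D$ is the same computation, and you correctly use the symmetry at the level of paths, $(-X_s)_{s\ge0}\overset{d}{=}(X_s)_{s\ge0}$.

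Your technical aside goes beyond the paper, which silently identifies $\{T^g_\mathrm{det}(K)>t\}$ with ``no node detects $K+g(s)$ for any $s\le t$''. One claim in it needs correcting: these two events need not agree up to a null set for general measurable $g$.

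\emph{Counterexample.} Take $K=\{o\}$, $g\equiv o$ on $[0,t]$ and $g\equiv z$ on $(t,\infty)$ with $|z|$ large. With positive probability nothing detects $o$ during $[0,t]$, while some node has $|X^i_t-z|<R_i$. Right-continuity of $X^i$ then gives $T^g_\mathrm{det}=t$. Hence $\P(T^g_\mathrm{det}>t)$ is strictly smaller than the right-hand side, which only depends on $g$ restricted to $[0,t]$.

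\emph{Consequence.} Your $t+\varepsilon$ sandwich works exactly when $s\mapsto\E|B_R(\Gamma^g_s)+K|$ is right-continuous at $t$, e.g.\ for $g$ right-continuous at $t$. Otherwise, reading $\{T^g_\mathrm{det}(K)>t\}$ as ``no detection on $[0,t]$'' is not an optional fallback; it is what the identity actually means, and it is what the paper's proof uses.

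\emph{Open versus closed balls.} If you define detection with the open balls $B_{R_i}(X^i_s)$ used in $\G_s$, you get $B_R(\Gamma^g_t+K)=B_R(\Gamma^g_t)+K$ directly. The null-boundary argument then becomes unnecessary. This also removes a real discrepancy that your closed-ball convention creates when $\P(R=0)>0$ and $|K|>0$: closed balls of radius $0$ still detect, while the open $B_0(\cdot)$ in the statement is empty.
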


\subsection{Coverage time}
Instead of detecting a single potentially moving particle, we now consider a static volume $A\subset\R^d$ and ask how long it takes for the process to have ``seen'' all of $A$. Put precisely, we are interested in
\[
	T_\mathrm{cov}(A):= \inf\Big\{t\geq0 \colon A\subset \bigcup_{s\le t}\G_s\Big\}.
\]
While this question is difficult to answer for individual $A$, we can give asymptotics on $T_\mathrm{cov}(kA)$ for large $k$ in terms of its Minkowski dimension $\beta>0$, that is,
\[
	\beta := \lim_{\eps\downarrow 0} \log M(A,\eps)/\log(\eps^{-1}),
\]
if the limit exists and where $M(A,\eps)$ is the minimal number of balls of radius $\eps$ needed to cover $A$. We have the following result, proved in \Cref{subsec:proof-coverage-time}.
\begin{theorem}[Coverage time]\label{thm:coverage-time}Consider the dynamic Boolean model with $\alpha\in(0,2]$ and $d>\alpha$. Then, for $A\subset \R^d$ with Minkowski dimension $\beta$,
    $$\lim_{k\uparrow\infty} \frac{\E T_\mathrm{cov}(kA)}{\log(k)} = \frac{\beta}{\lambda\Cap{(\alpha,d)}\E R^{d-\alpha}}.$$ 
\end{theorem}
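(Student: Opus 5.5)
Write $\theta:=\lambda\Cap{(\alpha,d)}\E R^{d-\alpha}$. The plan is to reduce coverage to detection of single points and then run the standard first/second-moment (covering-number) argument of Peres et al., with \Cref{lem:detection-time-is-void-probability} and \Cref{thm:detection-time}(i) supplying the tail of the detection time of a single point. The key observation is that a point $x$ is covered by time $t$ iff $T^{o}_\mathrm{det}$, for the target sitting at $x$, is at most $t$. By stationarity every point has the tail $\exp(-t\theta(1+o(1)))$, and the $o(1)\ge 0$ in \Cref{thm:detection-time}(i) gives the clean bound $\P(x\text{ uncovered at }t)\le e^{-\theta t}$.

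\emph{Upper bound.} Fix $\eps>0$ and cover $kA$ by $N\le M(A,\delta/k)\approx (k/\delta)^{\beta+o(1)}$ balls $B_\delta(x_j)$ of a fixed small radius $\delta$.
\begin{enumerate}[(a)]
\item A ball $B_\delta(x_j)$ is fully covered by time $t$ if the process detects the point $x_j$ with a node whose radius exceeds $R_i-\delta$ "shrunk" by $\delta$. Equivalently, one works with the thinned model of radii $(R_i-\delta)_+$.
\item By \Cref{lem:detection-time-is-void-probability} and \eqref{eq:convergence-volume-capacity}, the uncovered probability in this thinned model is at most $\exp(-t\theta_\delta)$, where $\theta_\delta=\lambda\Cap{(\alpha,d)}\E((R-\delta)_+)^{d-\alpha}\to\theta$ as $\delta\downarrow0$ by monotone convergence.
\item A union bound gives $\P(T_\mathrm{cov}(kA)>t)\le N e^{-\theta_\delta t}$.
\item Integrating over $t\ge t_0=(1+\eps)\log N/\theta_\delta$ yields $\E T_\mathrm{cov}(kA)\le t_0+O(1)$.
\end{enumerate}
Dividing by $\log k$ and letting $k\to\infty$, then $\delta,\eps\to0$, gives $\limsup\le\beta/\theta$. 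The case $\E R^{d-\alpha}$ coming only from large radii is harmless, since $(R-\delta)_+\uparrow R$.

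\emph{Lower bound.} Choose $n\approx k^{\beta-\eps}$ points $x_1,\dots,x_n\in kA$ that are pairwise $k^{\eta}$-separated for a small $\eta>0$; this is possible from the Minkowski dimension, since a packing number at scale $k^{\eta}$ for $kA$ is about $k^{(1-\eta)\beta}$. Let $Z$ be the number of these points undetected by time $t=(1-\eps)\log n/\theta$. By \Cref{thm:detection-time}(i), $\E Z=n\,e^{-\theta t(1+o(1))}\to\infty$ polynomially in $k$. The idea is then to show $\operatorname{Var}Z=o((\E Z)^2)$, so that $\P(Z>0)\to1$ by Chebyshev, and hence $\E T_\mathrm{cov}\ge t(1-o(1))$. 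By \Cref{lem:detection-time-is-void-probability} applied with Poisson void probabilities,
\[
\P(x_i,x_j\text{ both undetected})=\exp\bigl(-\lambda\E|B_R(\Gamma_t)\cup (B_R(\Gamma_t)+x_j-x_i)|\bigr),
\]
so the covariance term is controlled by $\exp\bigl(\lambda\E|B_R(\Gamma_t)\cap(B_R(\Gamma_t)+x_j-x_i)|\bigr)-1$.

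The main obstacle is bounding this expected intersection volume uniformly for separation $|x_i-x_j|\ge k^\eta$ while $t\sim\log k$. For $\alpha=2$ this follows from Gaussian tails, but for $\alpha<2$ the process jumps far. To handle it, I would bound
\[
\E|B_R(\Gamma_t)\cap(B_R(\Gamma_t)+z)|\le\int \P(y\in B_R(\Gamma_t))\,\P(y-z\in B_R(\Gamma_t))\,dy.
\]
I would then use the heat-kernel (hitting-probability) estimate $\P(y\in B_r(\Gamma_t))\lesssim \min(1,\,t\,r^{d-\alpha}|y|^{-d}+\ldots)$ for $|y|\gg t^{1/\alpha}$. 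This yields an intersection volume of order $t^2|z|^{-d}\cdot\mathrm{poly}$, tending to $0$ for $|z|\ge k^\eta$ provided $d>\alpha$ and $t=O(\log k)$. Unbounded radii are handled by truncating $R$ at a level $L$ (the truncated model only covers less) and letting $L\to\infty$ at the end, via continuity of $\E R^{d-\alpha}$. Summing over pairs then gives $\operatorname{Var}Z\le \E Z+(\E Z)^2 o(1)$, and letting $\eps\to0$ yields $\liminf\ge\beta/\theta$.
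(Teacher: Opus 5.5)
Your upper bound is the paper's argument. The skeleton of your lower bound is also the paper's: a second moment for the number of undetected, well-separated points, using the two-point void probability from \Cref{lem:detection-time-is-void-probability}. Taking all pairs $k^{\eta}$-separated is a harmless simplification of the paper's discarding of $o(M(kA)^2)$ close pairs.

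The gap is in the step you yourself call the main obstacle. The inequality
\[
\E|B_R(\Gamma_t)\cap(B_R(\Gamma_t)+z)|\le\int \P(y\in B_R(\Gamma_t))\,\P(y-z\in B_R(\Gamma_t))\,\d y
\]
is false. Both events are hitting events of the \emph{same} path, as they must be, since the two-point formula comes from a single node detecting either target, and they are positively correlated. Take $|y|\approx|y-z|\gg|z|\gg t^{1/\alpha}$. Reaching $B_R(y)$ costs one very unlikely long excursion, after which reaching $B_R(y-z)$ costs only a hitting probability at distance $|z|$. The joint probability is therefore far larger than the product of the two tiny marginals, so your order $t^2|z|^{-d}$ is not justified.

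The missing idea is the strong Markov property at the first entrance into one of the two balls, together with the symmetry of $X$:
\[
\begin{aligned}
&\P\big(\exists s,s'\le t\colon X_s\in B_R(y),\,X_{s'}\in B_R(y-z)\big)\\
&\quad\le\Big(\P\big(\exists s\le t\colon X_s\in B_R(y)\big)+\P\big(\exists s\le t\colon X_s\in B_R(y-z)\big)\Big)\,\P\big(\exists s\le t\colon X_s\in B_{2R}(z)\big).
\end{aligned}
\]
Integrating in $y$, the overlap is at most $2\,\E|B_R(\Gamma_t)|\cdot\P(\exists s\le t\colon X_s\in B_{2R}(z))$. So only one hitting probability is needed, at the separation distance $|z|$, and it must beat the linear growth of the sausage volume. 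This is what the paper does; it then evaluates the remaining integral on a ball and its complement using \Cref{prop:exitTime} and \Cref{lem:hitting-probability}.

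Your treatment of unbounded radii is also flawed. For a \emph{lower} bound on $T_\mathrm{cov}$, the monotonicity ``the truncated model only covers less'' points the wrong way. The truncated model has a larger coverage time, so showing that it leaves some $x_i$ undetected says nothing about the original model. The radii must stay untruncated in $\E Z$ and in the two-point formula. Only the expectation over $R$ in the overlap term should be split, at a level $L_k\to\infty$ growing with $k$; the paper uses $L(k)^{\alpha/2d}/4$.

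The large-radius part is controlled by scaling and \Cref{lem:Boolean-Model-under-shrinkage}:
\[
\E\big[\E|B_R(\Gamma_t)|\,\I\{R\ge L_k\}\big]\le\E|B_1(\Gamma_1)|\,\E\big[R^d\I\{R\ge L_k\}\big]\to0,
\]
valid once $L_k\ge t^{1/\alpha}$. This is exactly where the assumption $\E R^d<\infty$ enters. Since the second-moment estimate has to hold for the original law of $R$, this contribution must vanish as $k\to\infty$, which forces the cut-off to depend on $k$ rather than being a fixed $L$ sent to infinity at the end. For $R<L_k$, the Markov bound above gives a vanishing overlap, using $|z|\ge k^\eta$, $t\asymp\log k$ and $L_k$ a sufficiently small power of $k$.
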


\subsection{Percolation time}
Finally, we consider the \emph{percolation time} of a target initially located at the origin and that may or may not move. While the detection time deals with the question when the target is detected for the first time by \emph{any} node, the percolation time requires additionally that the detecting node belongs to the \emph{unbounded component} at the time of detection. 

Recall that for \(d\geq 2\) there exists a critical intensity \(\lambda_c\) such that for all \(\lambda>\lambda_c\), the set
\(
    \G_0
    %= \bigcup_{i\ge 1} B_{R_i}(X_0^i)
\)
contains an unique unbounded connected component almost surely. As mentioned above, by the displacement theorem for Poisson point processes, at each time \(t\), the set of node locations \(\Psi_t:=\{X^i_t\}_{i\ge 1}\) has the same distribution as \(\Psi_0\) so that each \(\Psi_t\) contains such a component almost surely, and we write \(\Psi_t^\infty\subset\Psi_t\) for the set of all nodes that belong to the unbounded component, and write
\[
\G_t^\infty:=\bigcup_{X^i_t\in\Psi_t^\infty}B_{R_i}(X_t^i)
\] 
for the component itself. The percolation time is then defined as
\[
    T_\perc^g := \inf\big\{t\geq 0\colon \exists X^i_t \in \Psi_t^\infty \text{ such that }|X^i_t-g(t)|<R_i\big\} = \inf\big\{t\geq 0\colon g(t)\in \G_t^\infty\big\},
\]
where the function \(g\colon[0,\infty)\to\R^d\), \(g(0)=o\), describes the movement of the target. The next result gives upper tail-bounds for the detection time of a target that does not move too quickly; particularly including the case where the origin does not move at all. For two functions \(f\geq 0\) and \(h>0\), we recall the Landau notation \(f=\omega(h)\) for \(\liminf_{t\to\infty}f(t)/h(t)=\infty\) and the notation \(f=o(h)\) for \(\limsup_{t\to\infty}f(t)/h(t)=0\). 
\begin{theorem}[Percolation time]\label{thm:percTime}
Consider the dynamic Boolean model with \(\alpha\in(0,2)\) and  \(d\geq 2\). 
For any \(\lambda>\lambda_c\) and function \(f(t)=\omega(1)\), there exists a constant \(c=c(d,\lambda,\alpha,f)>0\) such that for all $g$ with \(|g(t)|=\exp(o(t/f(t))\), we have, for all sufficiently large \(t\),
\[
	\P(T_{\perc}^g\geq t) \leq \exp\big(-c t / f(t)\big).
\] 
\end{theorem}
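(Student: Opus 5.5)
We follow the strategy of Peres et al.\ for Brownian motion, adapted to $\alpha$-stable motion and random radii. The idea is to split the event $\{T_\perc^g\ge t\}$ into a \emph{bad-environment} event and a \emph{no-detection-given-good-environment} event.

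Fix a small time step $\Delta>0$ and consider the times $t_j=j\Delta$ for $j\le t/\Delta$. Since $\lambda>\lambda_c$, we first replace the random radii by a deterministic truncation: there is $r_0$ such that the Boolean model with radii $R\wedge r_0$ (or, by thinning, with fixed radius $r_0$ on a positive fraction of nodes) is still supercritical, and percolation for the truncated model implies percolation for the original one. With this simplification we may assume bounded radii throughout.

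\textbf{Step 1: a good box around the target.} Consider the box $Q_t$ of side $\ell(t)=\exp(t/f(t))$ around the origin. By the hypothesis $|g(t)|=\exp(o(t/f(t)))$, the target stays deep inside $Q_t$ up to time $t$. Tile $Q_t$ into mesoscopic cubes of side $L$, with $L$ large and fixed. Call a cube \emph{good at time $s$} if it contains a crossing cluster of $\G_s$ that connects to the crossing clusters of its neighbours; static supercritical Boolean percolation (Penrose--Pisztora type estimates) makes each cube good with probability close to $1$. The main tool is a multiscale/coupling argument in the spirit of Peres et al.: at each time $t_j$ we show that, with probability at least $1-e^{-c t/f(t)}$, the good cubes at $t_j$ form a connected structure spanning $Q_t$ that is part of $\G_{t_j}^\infty$. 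The polynomial number of cubes, $\ell(t)^d$, together with a union bound over $t/\Delta$ times, is absorbed exactly because $\log \ell(t)=t/f(t)$. To decouple across time and space, we use Poisson thinning: split $\Psi_0$ into independent pieces. Heavy jumps of the $\alpha$-stable process are controlled by heat-kernel bounds $p_s(x)\asymp \min(s^{-d/\alpha}, s|x|^{-d-\alpha})$, which show that a node moves at most distance $L$ within time $\Delta$ with probability bounded below. Hence over short time windows the configuration is dominated from below by a still-supercritical Poisson process.

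\textbf{Step 2: detection.} On the good event, at each $t_j$ the cube containing $g(t_j)$ has a crossing cluster inside $\G_{t_j}^\infty$. Using a fresh independent thinned fraction of nodes, we require that $g(t_j)$ be covered by a node lying within distance $L$ of that cluster, so that this node is linked to the cluster. This happens with a probability $p>0$ that is uniform in $j$ and in $g$, by the void-probability formula of \Cref{lem:detection-time-is-void-probability} applied to a ball. To gain independence across $j$ we use separate Poisson thinnings for separate blocks of times, or alternatively we argue via Markov/renewal: the probability of failing at all $\asymp t/\Delta$ times is at most $(1-p)^{c t}$. Any fresh-thinning cost only reduces the exponent, and the result is at least $e^{-ct/f(t)}$ since $f=\omega(1)$.

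\textbf{Main obstacle.} The hardest part is Step 1: showing that the infinite cluster is present around the target \emph{simultaneously} at all $t_j$, with failure probability only $e^{-ct/f(t)}$, despite the long-range space-time correlations created by heavy-tailed jumps. Here I would first try a renormalisation in which particles that make a jump larger than $L$ during $[t_j,t_{j+1}]$ are discarded; this is a Poisson thinning with retention probability $1-O(\Delta L^{-\alpha})$, which keeps the model supercritical. Once those particles are removed, the remaining motion is local and the classical multiscale argument for Brownian particles can be applied to the retained particles.
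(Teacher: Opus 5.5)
There is a genuine gap, and it sits where the difficulty of the theorem lies: nothing in your argument makes the configurations at different times independent.

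\textbf{Step 2: no independence across times.} You treat the events ``$g(t_j)$ is linked to $\G^\infty_{t_j}$'' as succeeding with probability at least $p$ uniformly given the past. Neither mechanism you offer delivers this.
\begin{itemize}
\item A thinning is chosen at time $0$ and then moves with the nodes, so its configurations at $t_j$ and $t_{j+1}$ are strongly correlated.
\item Splitting $\Psi_0$ into $K$ independent thinnings, one per block of times, gives at most $K$ independent trials. For large $K$ each thinning has intensity $\lambda/K<\lambda_c$ and cannot carry its own infinite component. If only the detecting nodes are thinned, the cluster they must link to is shared by all blocks, and its position relative to the target is correlated in time.
\item The renewal route needs a lower bound on the success probability given all earlier failures. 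Positive correlation destroys exactly this: after a failure the infinite component is likely far from the target, and since particles move little, it stays far.
\end{itemize}
A quick check shows the uniform bound is actually false. Your $p$ is a single-time quantity, hence independent of $\Delta$. If it held conditionally, you would get $\P(T^o_\perc\ge t)\le(1-p)^{\lfloor t/\Delta\rfloor}$. For small $\Delta$ this contradicts $\P(T^o_\perc\ge t)\ge\P(T^o_{\mathrm{det}}>t)=\exp\big(-t\lambda\Cap(\alpha,d)\E R^{d-\alpha}(1+o(1))\big)$ from \Cref{thm:detection-time}.

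\textbf{Step 1 fails as stated.} For fixed $L$, the event that the cube containing $g(t_j)$ carries a crossing cluster inside $\G^\infty_{t_j}$ has, by stationarity, a $t$-independent probability strictly below $1$ (the cube is vacant with positive probability). So it cannot hold with probability $1-e^{-ct/f(t)}$. Fixing this requires $L(t)^{d-1}\gtrsim t/f(t)$. But then being ``within distance $L$ of the cluster'' no longer implies a link, and the target joining the cluster is again the time-correlated event. Your count is also off: $\ell(t)^d=e^{dt/f(t)}$ cubes are not absorbed by $e^{-ct/f(t)}$ unless $c>d$. The hypothesis $|g(t)|=\exp(o(t/f(t)))$ exists precisely to make a box-volume prefactor negligible.

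\textbf{What is missing.} You need a decoupling statement like \Cref{prop:decoupling}. With probability at least $1-(V^{1/d}+c(t/\log t)^{1/\alpha})^d e^{-t/f(t)}$, at a $(1-\varepsilon)$-fraction of integer times the points in a box $Q_V$ containing the target's range contain a Poisson process of intensity $(1-\xi)\lambda$. This process is independent of everything else, in particular of the ones at the other good times.

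The paper proves this from soft local times (\Cref{prop-softLocalTimes}) and the mixing estimate \Cref{thrm:mixing}. The latter uses the stable heat-kernel bounds, H\"older continuity and escape probabilities directly, large jumps included. These sit inside a space-time multiscale scheme: at each scale one sacrifices buffer windows of length $\Delta_j\asymp v_j^{\alpha/d}$ and shrinks the box side by $\asymp\Delta_j^{1/\alpha}$. The correction $f$ originates in this scheme.

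Only afterwards does the static input enter. For $V\ge t^{d/(d-1)}$, each fresh process has a unique giant in $Q_V$ belonging to the infinite component, up to probability $e^{-cV^{(d-1)/d}}$. The target lies in it with probability at least $\rho$, and independence gives $(1-\rho+o(1))^{(1-\varepsilon)t}$.

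\textbf{Random radii.} For the fresh process to carry correctly distributed radii, the paper truncates and discretises $R$ into $M$ equiprobable classes and decouples all classes simultaneously. Your proposal needs some such device, which $R\wedge r_0$ alone does not supply. Also, a fixed radius $r_0$ on a thinned fraction is not guaranteed to be supercritical at the given $\lambda$.

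\textbf{Discarding large jumps does not help.} A single thinning valid on all $\asymp t/\Delta$ windows retains a particle only with probability $\approx e^{-ctL^{-\alpha}}\to0$. Moreover, decorrelation comes from the spreading of the heat kernel, not from locality of the motion.
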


\begin{remark}
    The proof of \Cref{thm:percTime} relies on the decoupling result in \Cref{prop:decoupling} that allows to independently resample the Poisson points in a large box for a positive fraction of time steps, see also \Cref{sec:outlook} for a more detailed description of the underlying heuristics. The decoupling result itself depends on \emph{heat-kernel bounds} and related results for the \(\alpha\)-stable process and is thus only formulated for \(\alpha<2\), cf.\ \Cref{sec:heat-kernel}. However, the result can be extended, \emph{mutatis mutandis}, to the \(\alpha=2\) case by applying the Gaussian heat-kernel bounds instead. By doing so, \Cref{thm:percTime} extends to \(\alpha=2\) and thereby improves the analogous result~\cite[Theorem~1.6]{Peres2011} for the Brownian model with fixed radii in two ways: First, by applying to random radii; second, improving the correction term in the exponential tail from poly-logarithm to any arbitrarily-slow growing function \(f\).   
\end{remark}

\subsection{Discussion and outlook}\label{sec:outlook}
Regarding detection times, one may speculate that the $\beta$-dependent constant in the exponential tail of the probability in \Cref{thm:detection-time} Part $(iv)$ is increasing in $\beta$. This means, at least for linear motions, that the detection becomes faster if the target particle moves faster. Furthermore, note that detection times can be seen as hard-core variants of {\em trapping problems} as considered in~\cite{drewitz2014symmetric}. In this class of models, the covered volume $|B_R(\Gamma_t^g)|$ is replaced by a more general energy term.  
Furthermore, in view of peer-to-peer ad-hoc networks, it is interesting to study the first time a target particle is detected for an uninterrupted time interval $[t,t+a]$, for $a\ge 0$, reflecting the requirement that any transmission takes a minimal amount of time. Also, we suspect a qualitatively similar picture if, instead of considering the detection time by a single node, we study the first time a target is detected by at least $k\ge 2$ nodes, maybe even simultaneously. 
Similar extensions can also be considered in view of the coverage time, e.g., a minimal time of (uninterrupted) coverage or coverage by multiple nodes. Also, what can be said about characteristics beyond the expected value, such as the variance or large deviations?

The proof for the percolation time is based on the following strategy. For \(d\ge 2\) and a Poisson intensity \(\lambda>\lambda_c\), for each time, a typical node is part of the unbounded component with probability \(\theta=\theta(\lambda)>0\). In view of the percolation time with $g\equiv o$ fixed, on an intuitive level, one may expect the following: At the detection time \(T_{\det}\), the origin has been detected by a node of the unbounded component with probability \(\theta\). If this is the case, then \(T_{\perc}=T_{\det}\), and if not, one restarts the processes to check again if the origin has been detected after some time has passed. Assuming enough independence, the number of trials follows a geometric distribution, which ultimately leads to an exponential tail of \(T_{\perc}\) thanks to \Cref{thm:detection-time}. However, this ignores the fact that the model is positively correlated. Indeed, a failed first trial increases the probability of failing again as the unbounded component is, in that case, probably farther away from the origin than on average.
In order to tackle this issue, we establish a result that ensures that the model mixes fast enough so that it behaves like an independent, slightly thinned copy of itself at most integer time steps. This was first shown for a system of Brownian motions with a deterministic radius in \cite{Peres2011}.
The price we pay using this approach is a correction $f$ in the tail estimate of the percolation time, which may be removable, but only with the help of new proof ideas. In view of \Cref{rem:volume-constant}, stating that a large class of moving targets is detected not slower than a static target, it seems reasonable to believe that this is also true for percolation times. The general idea would be that a moving target should lead to faster decoupling due to additional spatial decorrelations. Indeed, one might conjecture that for a fast and linearly moving target \(g_\beta\), as in \Cref{thm:detection-time} Part $(iv)$, we have
\[
\lim_{\beta\uparrow\infty}\lim_{t\uparrow\infty}\tfrac{1}{t}\log\P(T_{\perc}^{g_\beta}>t)=\log(1-\theta).
\]
However, proving this also requires substantial additional effort, which we leave for future work. We also remark that the condition on the displacement of $g$ in \Cref{thm:percTime} ensures appropriately comparable time and space scales. Relevant examples of target nodes, such as static or independent copies of the underlying particle movement, are covered by this condition.

A further characteristic to be studied is the {\em isolation time}, i.e., 
\begin{equation*}
T_\mathrm{iso}^g:=\inf\big\{t\ge 0\colon g(t)\not\in\G_t\big\},
\end{equation*}
as studied in~\cite{peres2013isolation} for the case of Brownian motion. We anticipate an exponential tail behaviour similarly as for transient Brownian motions.

On a general level, one may wonder how the system behaves if, instead of $\Gamma_t$, one would consider Lévy sausages based on a continuously interpolated path of the Lévy processes. More generally, other mobility schemes such as random waypoint models, linear motions or jump-diffusion processes potentially yield different qualitative behaviours. Let us also mention that recently detection times have also been analysed in hyperbolic space~\cite{kiwi2025tail}, opening the door towards the investigation of further network characteristics in this setting.

Finally, we note that the mobile network based on Lévy sausages can serve as a relevant dynamic graph model on which further dynamic processes can be studied. As a prime example, we think of the contact process, as already investigated in dynamic lattice settings, see for example~\cite{seiler2023contact,schapira2025contact,kesten2006phase, deshayes2026contact}. We conjecture that phase transitions similar to the ones shown in \cite{kesten2006phase,Baldasso2023} can be shown, but leave this for future work.

\section{Proofs}\label{sec:proofs}
\subsection{Proofs for detection times}\label{subsec:proof-detection-time}
We start by proving the key \Cref{lem:detection-time-is-void-probability}, which describes detection times as void probabilities. 
\begin{proof}[Proof of \Cref{lem:detection-time-is-void-probability}]
    We adapt the proof given in~\cite{Peres2011}. 
    Recall that $\Psi_0\subset\R^d$ denotes the Poisson point process of intensity $\lambda>0$ representing the initial node locations. Let $\Psi\subset\Psi_0$ denote the set of nodes that detect any point in the (moving) compact set $K$ up to time $t\ge 0$. As each node moves and possesses a random detection range,  independently of the others, $\Psi$ is an independent Poisson thinning and thus an inhomogeneous Poisson point process whose intensity measure has Lebesgue density given by
    \[
    \begin{aligned}
&\rho(x):=\lambda \P\big(\exists s\leq t \colon K+g(s)\cap B_R(X_s+x) \neq \emptyset  \big)\\
    &\qquad = \lambda \P\big(\exists s\leq t \colon x \in B_R(K+g(s)-X_s)  \big)
    = \lambda \P\big( x\in B_R(\Gamma_t^g) + K  \big),
    \end{aligned}
    \]
    where we have also used that $X_s$ has the same distribution as $-X_s$ in the last equality.
    Thus, the probability of being undetected up to time $t$ is the void probability
      \[ 
        \begin{aligned}
            \P(T_\mathrm{det}^g(K)>t) 
            &
                = \P(\Psi=\emptyset) 
                = \exp\Big(-\int_{\R^d} \lambda \P\big( x\in B_R(\Gamma_t^g) + K  \big) \dx x\Big) 
            \\ &
                = \exp\big(- \lambda \E|B_R(\Gamma_t^g) + K|\big),
        \end{aligned}
        \]
    concluding the proof.
\end{proof}

From now on, we will mainly concern ourselves with volume estimates to derive statements about detection probabilities. The following lemma will be helpful. We give the proof later in this section.
\begin{lemma}[Volume bounds]\label{lem:Boolean-Model-under-shrinkage}
    Let $A\subset\R^d$ be a set, then $|B_r(A)|\leq r^d|B_1(A)|$.
\end{lemma}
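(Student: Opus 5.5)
The plan is to prove the scaling inequality $|B_{\lambda\rho}(A)|\le \lambda^d|B_\rho(A)|$ for all $\rho>0$ and $\lambda\ge 1$; this is a classical inequality of Kneser type. Taking $\rho=1$, $\lambda=r$ gives the lemma. The lemma must be read for $r\ge 1$, which is how it will be used (comparing radius $R$ against radius $1$). For $r<1$ it fails: if $A$ is a large ball, then $|B_r(A)|\ge|A|>r^d|B_1(A)|$ once $r$ is small.

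\emph{Step 1: finite sets.} Let $A=\{a_1,\dots,a_n\}$ and let $V_i=\{y\colon |y-a_i|\le |y-a_j|\ \forall j\}$ be the Voronoi cells. Each $V_i$ is a convex polyhedron containing $a_i$, and the cells overlap only on Lebesgue-null boundaries. We have $B_\rho(A)\cap V_i=B_\rho(a_i)\cap V_i=:S_i(\rho)$, so $|B_\rho(A)|=\sum_i|S_i(\rho)|$. Consider the contraction $\phi_i(y)=a_i+(y-a_i)/\lambda$. If $y\in S_i(\lambda\rho)$, then $|\phi_i(y)-a_i|<\rho$. Also $\phi_i(y)\in V_i$, because $\phi_i(y)$ lies on the segment $[a_i,y]$ and $V_i$ is convex and contains both endpoints. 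Hence $\phi_i(S_i(\lambda\rho))\subset S_i(\rho)$. Since $\phi_i$ scales volumes by $\lambda^{-d}$, we get $|S_i(\lambda\rho)|\le\lambda^d|S_i(\rho)|$. Summing over $i$ yields the claim for finite $A$.

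\emph{Step 2: bounded sets.} For bounded $A$ and $\varepsilon>0$, choose a finite $\varepsilon$-net $F_\varepsilon\subset A$. Then
\[
B_{\lambda\rho-\varepsilon}(A)\subset B_{\lambda\rho}(F_\varepsilon)\quad\text{and}\quad B_\rho(F_\varepsilon)\subset B_\rho(A).
\]
Step 1 therefore gives $|B_{\lambda\rho-\varepsilon}(A)|\le\lambda^d|B_\rho(A)|$. The open neighbourhoods $B_{\lambda\rho-\varepsilon}(A)$ increase to $B_{\lambda\rho}(A)$ as $\varepsilon\downarrow0$, so continuity from below of Lebesgue measure gives the claim for bounded $A$. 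No measurability of $A$ itself is needed, since $r$-neighbourhoods are open sets.

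\emph{Step 3: general sets.} For arbitrary $A$, apply Step 2 to $A\cap B_n(o)$ and let $n\to\infty$. Both sides increase monotonically, so the inequality passes to the limit, including the case where both sides are infinite.

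The only genuinely geometric step is Step 1: one needs a decomposition in which a radial contraction towards a single centre maps the large neighbourhood into the small one. The convexity of the Voronoi cells is exactly what makes this work. The approximation in Steps 2 and 3 is routine.
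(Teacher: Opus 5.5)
Your proof is correct. It proves the same intermediate inequality as the paper, but the finite-set step takes a genuinely different route.

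The paper proves the fixed-radius statement $|B_r(cA)|\le|B_r(A)|$ for $c\in[0,1]$. This is equivalent to yours via the scaling $|B_{\lambda\rho}(A)|=\lambda^d|B_\rho(\lambda^{-1}A)|$. For finite $A$ the paper argues by induction on the number of points. It translates one point to the origin and sets $A_1=A\setminus\{o\}$. Inclusion--exclusion then gives $|B_r(A)|=|B_r(o)|+|B_r(A_1)|-|B_r(o)\cap B_r(A_1)|$. The elementary two-ball inclusion $B_{r_1}(o)\cap B_{r_2}(x)\subset B_{r_1}(o)\cap B_{r_2}(cx)$ for $r_1\le r_2$, which follows from convexity of the norm, shows that the overlap term can only grow when the configuration is shrunk. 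Bounded sets are handled by a finite subcover, with the same two-ball inclusion used again to transfer the cover from $A$ to $cA$. Unbounded sets are dismissed because both sides are infinite. The lemma then follows from $|B_r(A)|=r^d|B_1(r^{-1}A)|$ with $c=r^{-1}$.

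Your Voronoi argument replaces the induction and inclusion--exclusion with a single, non-inductive, cell-by-cell radial contraction. This makes it transparent why $\rho\mapsto\rho^{-d}|B_\rho(A)|$ is non-increasing. Your $\varepsilon$-net approximation is also shorter than the paper's. The paper's route, in turn, needs only two-ball geometry. It is phrased in the ``shrink the node configuration at fixed radius'' form that is natural for Boolean models.

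Your caveat that $r\ge 1$ is required is correct and worth stating explicitly. The paper uses it tacitly when it applies the shrinking lemma with $c=r^{-1}\in[0,1]$. Both applications in the paper lie in that regime:
\begin{itemize}
\item radii $R\ge M$ with $M$ large in the random-radius volume asymptotics;
\item $r=R\,t^*(k)^{-1/\alpha}\ge 1$ in the coverage lower bound.
\end{itemize}
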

Returning to volume asymptotics, recall that the radius $R$ has finite $d$-th moment.
\begin{lemma}[Volume asymptotics for random radii]\label{lem:volume-asymptotic-random-radii}
We have for all $t\ge 0$
    \[
        \E|B_R(\Gamma_t)| \geq t\Cap{(\alpha,d)} \E R^{d-\alpha}\qquad\text{ and }\qquad\lim_{t\uparrow\infty}\tfrac1t \E|B_R(\Gamma_t)| = \Cap{(\alpha,d)} \E R^{d-\alpha}.
    \]
\end{lemma}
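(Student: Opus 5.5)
Since $R$ is independent of the path $\Gamma_t$, we can condition on $R$ and write
\[
\E|B_R(\Gamma_t)| = \E\big[h_t(R)\big], \qquad h_t(r):=\E|B_r(\Gamma_t)|.
\]
This reduces both claims to statements about the deterministic-radius function $h_t$, to which we can apply \eqref{eq:convergence-volume-capacity} and the monotonicity property stated after it.

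\emph{Lower bound.} Fix $r>0$. The function $t\mapsto h_t(r)-t r^{d-\alpha}\Cap(\alpha,d)$ is monotonically increasing. At $t=0$ it equals $|B_r(\{o\})|\ge 0$, so it is nonnegative for all $t\ge0$. Thus $h_t(r)\ge t r^{d-\alpha}\Cap(\alpha,d)$ for every $r\ge0$; the case $r=0$ is trivial. Integrating over the law of $R$ gives $\E|B_R(\Gamma_t)|\ge t\Cap(\alpha,d)\E R^{d-\alpha}$.

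This also yields
\[
\liminf_{t\uparrow\infty}\tfrac1t\E|B_R(\Gamma_t)|\ge \Cap(\alpha,d)\E R^{d-\alpha}.
\]
Note that $\E R^{d-\alpha}\le 1+\E R^d<\infty$ since $0<d-\alpha<d$.

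\emph{Upper bound.} The plan is dominated convergence applied to $\tfrac1t h_t(R)$. Pointwise, $\tfrac1t h_t(r)\to r^{d-\alpha}\Cap(\alpha,d)$ by \eqref{eq:convergence-volume-capacity}. For a dominating function we use \Cref{lem:Boolean-Model-under-shrinkage}. It gives $|B_r(\Gamma_t)|\le r^d|B_1(\Gamma_t)|$ when $r\ge1$, and trivially $|B_r(\Gamma_t)|\le|B_1(\Gamma_t)|$ when $r\le1$. Hence
\[
\tfrac1t h_t(r)\le \max(1,r^d)\,\tfrac1t h_t(1).
\]
Since $\tfrac1t h_t(1)$ converges, it is bounded by some constant $C$ for $t\ge1$. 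The majorant $C\max(1,R^d)$ is integrable by \eqref{eq:moments}. Dominated convergence then gives the limit $\Cap(\alpha,d)\E R^{d-\alpha}$.

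The only delicate point is the dominating function, specifically the scaling inequality in \Cref{lem:Boolean-Model-under-shrinkage} for $r\ge1$. The naive scaling of $B_r(A)$ is not a dilation of $B_1(A)$ unless $A$ is also scaled, so the lemma is needed exactly here. We take it as given. If it were unavailable, an alternative is to use the self-similarity \eqref{eq:self-similarity-alpha-stable}: $h_t(r)=r^d h_{t r^{-\alpha}}(1)$. Combined with the monotonicity of $t\mapsto h_t(1)$, this gives $h_t(r)\le r^d h_t(1)$ for $r\ge1$, which is the same bound.
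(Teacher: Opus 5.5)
Your proof is correct and is essentially the paper's argument: it uses the same three inputs (the limit \eqref{eq:convergence-volume-capacity}, the monotonicity of $t\mapsto \E|B_r(\Gamma_t)|-tr^{d-\alpha}\Cap(\alpha,d)$, and \Cref{lem:Boolean-Model-under-shrinkage}), with your single integrable majorant $C\max(1,R^d)$ replacing the paper's truncation at level $M$ followed by $M\uparrow\infty$. Restricting the volume bound to $r\ge 1$ is the right care, since the inequality $|B_r(A)|\le r^d|B_1(A)|$ fails for $r<1$ (e.g.\ for a long segment $A$). Your self-similarity fallback $h_t(r)=r^d h_{tr^{-\alpha}}(1)\le r^d h_t(1)$ for $r\ge 1$ is also valid.
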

\begin{proof}
    Let $M>0$. Then, by dominated convergence with majorant $\E|B_M(\Gamma_t)|$ and~\cite[Theorem~1~\&~2]{Getoor1965},
    \[\begin{aligned}
        \lim_{t\uparrow\infty} \tfrac1t \E|B_R(\Gamma_t)|& %=\lim_{t\uparrow\infty}\tfrac1t\E\big[\E|B_R(\Gamma_t)|\big]\\
        \geq \lim_{t\uparrow\infty} \tfrac1t \E\big[\E|B_R(\Gamma_t)|\I\{R<M\}\big]=\Cap{(\alpha,d)}\E\big[R^{d-\alpha}\I\{R<M\}\big]. 
    \end{aligned}
    \]
    Thus, with $M\uparrow\infty$, we have
    $\lim_{t\uparrow\infty} \tfrac1t \E|B_R(\Gamma_t)| \geq \Cap{(\alpha,d)} \E R^{d-\alpha}$.
    On the other hand, with the same arguments together with \Cref{lem:Boolean-Model-under-shrinkage}, we have
    \[\begin{aligned}
        \lim_{t\uparrow\infty} \tfrac1t \E|B_R(\Gamma_t)| &= \lim_{t\uparrow\infty} \tfrac1t \E\big[\E|B_R(\Gamma_t)|\I\{R<M\}\big]+  \lim_{t\uparrow\infty}\tfrac1t\E\big[\E|B_R(\Gamma_t)|\I\{R\ge M\}\big]\\
        &\leq \lim_{t\uparrow\infty} \tfrac1t \E\big[\E|B_R(\Gamma_t)|\I\{R<M\}\big]+  \lim_{t\uparrow\infty}\tfrac1t \E|B_1(\Gamma_t)| \E\big[R^d\I\{R\ge M\}\big]\\
        &= \Cap{(\alpha,d)}\E\big[R^{d-\alpha}\I\{R<M\}\big]+ \Cap{(\alpha,d)} \E\big[R^d\I\{R\ge M\}\big]<\infty.
    \end{aligned}
    \]
    So again, as $M\uparrow\infty$, we have $\lim_{t\uparrow\infty} \tfrac1t \E|B_R(\Gamma_t)| \leq \Cap{(\alpha,d)} \E R^{d-\alpha}$.
    The first statement follows from the monotonicity statement below~\eqref{eq:convergence-volume-capacity}.
\end{proof}

Let us turn our attention to moving particles. We have the following lower volume bound.

\begin{lemma}[Universal lower volume bound for drifts]\label{lem_PSSS_2.4.}
    There exists $C\in(0,1]$ depending only on $\alpha$ and $d$ such that
    \[\E|B_R(\Gamma^g_t)| \geq C \E|B_R(\Gamma_t)| \ge  C t\Cap{(\alpha,d)}\E R^{d-\alpha}.\]
\end{lemma}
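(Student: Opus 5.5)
The second inequality is the first statement of \Cref{lem:volume-asymptotic-random-radii}, so only $\E|B_R(\Gamma^g_t)|\ge C\,\E|B_R(\Gamma_t)|$ needs proof. Conditioning on $R$ (independent of $X$), it suffices to show $\E|B_r(\Gamma^g_t)|\ge C\,\E|B_r(\Gamma_t)|$ for each fixed $r>0$ with $C=C(\alpha,d)$, and then integrate over $r$. By self-similarity \eqref{eq:self-similarity-alpha-stable} applied jointly to $X$ and $g$ (replace $g$ by $r^{-1}g(r^\alpha\,\cdot)$), it is enough to treat $r=1$ at all times $t$, with $g$ arbitrary measurable.

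I would adapt the argument of \cite{Peres2011} (their Lemma~2.4), based on a second-moment / Paley--Zygmund comparison. Write
\[
\E|B_1(\Gamma^g_t)|=\int_{\R^d}\P\big(\tau^g_x\le t\big)\,\dx x,\qquad \tau^g_x:=\inf\{s\colon |X_s+g(s)-x|<1\}.
\]
Split $[0,t]$ into unit intervals and count occupation: for $x\in\R^d$ set $Z_x:=\int_0^{t+1}\1\{|X_s+g(s)-x|<2\}\,\dx s$. Then $\{Z_x>0\}$ is essentially contained in $\{\tau^g_x\le t+1\}$ (up to boundary effects handled by using radii $1$ versus $2$ and \Cref{lem:Boolean-Model-under-shrinkage} to compare $B_2$ with $B_1$ at the cost of a factor $2^d$). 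By Cauchy--Schwarz, $\P(Z_x>0)\ge (\E Z_x)^2/\E Z_x^2$, but integrating this pointwise is awkward. Instead I would use the standard hitting estimate: on $\{\tau^g_x\le t\}$, by the strong Markov property at $\tau^g_x$ and the heat-kernel lower bound for the $\alpha$-stable process, the process spends expected time at least $c_0>0$ in $B_2(x-g(s))$ during $[\tau,\tau+1]$ \emph{uniformly in the drift} provided $g$ does not move far in that window. Since $g$ is only measurable, that fails; so instead I would reverse the inequality: bound $\E Z_x\le \P(\tau^g_x\le t+1)\cdot \sup_{y,h}\E\int_0^{t+1}\1\{|X_s+h(s)-y|<2\}\dx s\mid X_0\in B_1(y)$, where the supremum over drifts $h$ of the expected occupation of a ball is bounded by the Green function: $\E\int_0^\infty \1\{|X_s-z(s)|<2\}\dx s\le \int_0^\infty \sup_z\P(X_s\in B_2(z))\dx s\le \int_0^\infty \min(1,c s^{-d/\alpha})\dx s=:G<\infty$, using $d>\alpha$ and the density bound $p_s(x)\le c s^{-d/\alpha}$. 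This gives
\[
\E|B_1(\Gamma^g_{t+1})|\ge G^{-1}\int \E Z_x\,\dx x=G^{-1}(t+1)|B_2|,
\]
an explicit linear lower bound independent of $g$.

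Finally compare to the drift-free volume: by \eqref{eq:convergence-volume-capacity} and the monotonicity of $t\mapsto\E|B_1(\Gamma_t)|-t\Cap(\alpha,d)$, together with subadditivity $\E|B_1(\Gamma_t)|\le \lceil t\rceil\,\E|B_1(\Gamma_1)|$ (union over unit time pieces and stationarity of increments), we get $\E|B_1(\Gamma_t)|\le c_1(t+1)$ for a constant $c_1(\alpha,d)$. For small $t$, $\E|B_1(\Gamma^g_t)|\ge|B_1|$ trivially handles the additive $+1$. Combining yields $\E|B_1(\Gamma^g_t)|\ge C\,\E|B_1(\Gamma_t)|$ with $C=\min\{1,|B_2|/(2Gc_1),|B_1|/(2c_1)\}$ after adjusting the time shift $t$ vs.\ $t+1$ (again via linear growth). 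The main obstacle is the occupation-time step: making the strong Markov argument rigorous for a merely measurable $g$ (the hitting time of a moving target) and the radius-$1$ versus radius-$2$ bookkeeping; the uniform Green-function bound over drifts, which is where $d>\alpha$ enters, is what makes it work.
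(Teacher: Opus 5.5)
Your argument is correct and is essentially the paper's: an occupation time $Z_x$, the strong Markov property at the hitting time of the moving ball to get $\E Z_x\le G\,\P(\text{hit})$ with a Green-function bound $G$ uniform in the drift (finite because $d>\alpha$), and $\int\E Z_x\,\dx x=t|B_1|$. The differences are minor. You bound the post-hitting occupation via $\sup_z\P(X_s\in B_2(z))\le\min(1,cs^{-d/\alpha})$ instead of the paper's centred-ball bound $\P(Y_s+z\in B_{2r}(o))\le\P(Y_s\in B_{2r}(o))$. You also prove the middle comparison with $\E|B_R(\Gamma_t)|$ explicitly, via the linear upper bound plus the trivial bound $|B_1|$ for small $t$, which the paper's proof does not spell out. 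Finally, the radius-$2$/horizon-$(t+1)$ detour is unnecessary and, as written, pairs the radius-$2$ occupation $Z_x$ with the radius-$1$ hitting time $\tau^g_x$, so the displayed bound is really for $\E|B_2(\Gamma^g_{t+1})|$; taking $Z_x:=\int_0^t\1\{|X_s+g(s)-x|<1\}\,\dx s$ gives $\P(\tau^g_x\le t)\ge\E Z_x/G$ directly.
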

In the case of Brownian motions $\alpha=2$, \cite{peres2012isoperimetric} shows that $C=1$ even for non-measurable $g$. As mentioned in \Cref{rem:volume-constant}, $C=1$ is only known under mild assumptions on $g$ for $\alpha$-stable processes.
\begin{proof}
    It suffices to check the statement for fixed deterministic $R=r>0$. First, we write
    \[Z^t_y := \int_0^t \I \big\{ X_s + g(s) \in B_r(y)\big\}\d s\]
    and note that we have
    $$\P\big(Z^t_y > 0\big) = \P\big(\exists s\leq t\colon  X_s + g(s) \in B_r(y)\big).$$
    Thus, we have
    \[\E\big|B_r(\Gamma^g_t)\big| = \int_{\R^d}\P\big(\Gamma^g_t \cap B_r(y)\neq\emptyset\big)\dx y = 
\int_{\R^d}\P\big(\exists s\leq t\colon X_s + g(s) \in B_r(y)\big)\dx y = \int_{\R^d}\P\big(Z^t_y >0\big) \dx y
.\]
Note that for general random variables $Y\geq0$ with $\P(Y>0)>0$, we have
$$\P(Y>0) = \E Y /\E[Y \,\vert\, Y>0].$$
Let us estimate the denominator. Denote by $\tau$ the first hitting time of (the closure of) $B_r(y)$ for $\Gamma_t^g$. The conditioning $Z_y>0$ is thus equivalent to $\tau<t$, which in turn implies $X_\tau+g(\tau)\in B_r(y)$. 
Let $Y=\{Y_s\}_{s\geq0}$ be an independent copy of $X$. Then, 
    \[\begin{aligned}
        \E\big[Z^t_y \,\vert\, Z^t_y>0\big] &= \E\Big[ \int_0^t \I \big\{ X_s + g(s) \in B_r(y)\big\} \dx s \,\Big\vert\, \tau < t \Big]\\
        &= \E\Big[\int_0^t \I\big\{ X_{s+\tau} + g(s+\tau) \in B_r(y)\big\} \dx s \,\Big\vert\, \tau < t \Big]\\
        &= \E\Big[ \int_0^{t-\tau} \I\big\{ X_{s+\tau} + g(s+\tau) \in B_r(y)\big\} \dx s \,\Big\vert\, \tau < t \Big]\\
        &= \E\Big[\int_0^{t-\tau}\I\big\{ X_{s+\tau}-X_\tau+g(s+\tau)-g(\tau) + X_\tau+g(\tau)\in B_r(y)\big\} \dx s\,\Big\vert\, \tau < t \Big].
        \end{aligned}
    \]
        By the strong Markov property, the right-hand side can be upper bounded by
        \[\begin{aligned}
          \E\Big[&\int_0^{t-\tau}\I\big\{ Y_{s} + g(s+\tau)-g(\tau)\in B_{2r}(o)\big\} \dx s \,\Big\vert\, \tau < t \Big]\\
        &=\E\Big[ \int_0^{t-\tau} \P\big(Y_{s} + g(s+\tau)-g(\tau)\in B_{2r}(o)\big) \dx s\,\Big\vert\, \tau < t \Big]\\
        &\leq\E\Big[ \int_0^{t-\tau} \P\big(Y_{s}\in B_{2r}(o)\big) \dx s\,\Big\vert\, \tau < t \Big]\\
        &\leq\E\Big[ \int_0^{\infty} \P\big(Y_{s}\in B_{2r}(o)\big) \dx s\,\Big\vert\, \tau < t \Big]\\
        &=\int_0^{\infty} \P\big(Y_{s}\in B_{2r}(o)\big) \dx s= r^\alpha\int_0^{\infty} \P\big(Y_{s}\in B_{2}(o)\big) \dx s
        = r^\alpha C <\infty,
    \end{aligned}\]
due to transience of the $\alpha$-stable process and where we used~\eqref{eq:self-similarity-alpha-stable} in the second-to-last equality.
On the other hand,
    \[\begin{aligned}
        \int_{\R^d} \E Z^t_y \ \dx y &= \int_{\R^d} \E\Big[\int_0^t \I\big\{ X_s + g(s) \in B_r(y)\big\} \dx s\Big] \dx y= \int_{\R^d} \int_0^t \P\big( X_s + g(s) \in B_r(y)\big) \dx s \, \dx y\\
        &= \int_{\R^d} \int_0^t \int_{B_r(o)} f_s\big(z+y-g(s)\big) \dx z \, \dx s\, \dx y= \int_0^t \int_{B_r(o)} 1 \dx s\, \dx y = t|B_r(o)|.
    \end{aligned}\]
Putting everything together, we have $\E|B_r(\Gamma^g_t)| \geq r^{-\alpha}t|B_r(o)|/C$, which finishes the proof.
\end{proof}

\begin{proof}[Proof of \Cref{thm:detection-time}]
    Items~(i) and (ii) are direct consequences of the \Cref{lem:volume-asymptotic-random-radii,lem_PSSS_2.4.}  after applying \Cref{lem:detection-time-is-void-probability}. For
    Item~(iii) note that for all $t,t'\geq 0$ we have that
        \[\begin{aligned}
\P\big(T_\mathrm{det}^Y>t+t'\big) &= \P\big(o\notin B_R(\Gamma_{t+t'}^Y)\big)
            = \E_Y\big[ \exp(-\lambda \E_X|B_R(\Gamma_{t+t'}^Y)|) \big]\\
            &= \E_Y\Big[\exp\Big(-\lambda \E_X\Big|\bigcup_{s\leq t+t'}B_R(X_s+Y_s)\Big|\Big)\Big]\\
            &\geq \E_Y\Big[\exp\Big(-\lambda \E_X\Big[\Big|\bigcup_{s\leq t}B_R(X_s+Y_s)\Big| + \Big|\bigcup_{t\leq s\leq t+t'}B_R(X_s+Y_s)\Big|\Big]\Big)\Big]\\
            &= \E_Y\Big[\exp\Big(-\lambda \E_X\Big|\bigcup_{s\leq t}B_R(X_s+Y_s)\Big|\Big)\Big]\E_Y\Big[\exp\Big(-\lambda\E_X\Big|\bigcup_{s\leq t'}B_R(X_s+Y_s)\Big|\Big)\Big]\\
            &= \P\big(T_\mathrm{det}^Y>t\big) \P\big(T_\mathrm{det}^Y>t'\big),
        \end{aligned}
        \]
        where we used independent increments and shift-invariance in space and time in the fourth line. 
    Thus, $-\log \P(T_\mathrm{det}^Y>t)$ is subadditive, so $\lim_{t\uparrow\infty}\tfrac{1}{t}\log\P(T_\mathrm{det}^Y>t) = \sup_{t\geq 1}\tfrac{1}{t}\log\P(T_\mathrm{det}^Y>t)$ by Fekete's Lemma. It remains to show that this limit is nontrivial. Item~(ii) gives an upper bound of $-C\lambda\Cap(\alpha,d)\E R^{d-\alpha}$. With regards to the lower bound, it suffices to check $\P(T_\mathrm{det}^Y>1)>0$. Since $Y$ is a Lévy process and in particular càdlàg, there exists some $M>0$ with $\P(Y_{[0,1]}\subset B_M(o))=:\eps>0$. Consider the dynamic Boolean model with the radii enlarged by $M$, i.e., $\tilde{\G}_t:=\cup_{i\geq1}B_{R_i+M}(X_t^i)=B_M(\G_t)$. Then, the detection time $\tilde{T}_\mathrm{det}^o$ for this model satisfies, by Item~(i),     \[\P(\tilde{T}_\mathrm{det}^o>1) = \exp\big(-\lambda\Cap(\alpha,d)\E[(R+M)^{d-\alpha}](1+o(1))\big)=:\delta>0.\]
    Since $Y$ is independent of $X_i$ and $R_i$, we have
    \[\P\Big(o\notin\bigcup_{s\leq1}B_M(\G_s) \text{ and }Y_{[0,1]}\subset B_M(o)\Big)=\P\big(\tilde{T}_\mathrm{det}^o>1 \text{ and } Y_{[0,1]}\subset B_M(o)\big) = \eps\delta>0.\]
    However, this event implies $\{Y_s\notin\G_s\, \forall s\leq1\}=\{T_\mathrm{det}^Y>1\}$, proving the claim.
    
    For Item~(iv), it suffices to show that 
\(\lim_{\beta\uparrow\infty}\lim_{t\uparrow\infty}\tfrac1t\lim_{\beta\uparrow\infty}\E\big|\bigcup_{s\le t}B_R\big(X_s-g_\beta(s)\big)\big|=\infty.\) Inspecting the proof of \Cref{lem_PSSS_2.4.}, we see that it suffices to show that 
\[\lim_{\beta\uparrow\infty}\lim_{t\uparrow\infty}\sup_{y\in \R^d}\E\big[Z^{t,\beta}_y\big|Z^{t,\beta}_y>0\big]=0.\] 
For this, we can follow the initial steps as above, and bound
\[\begin{aligned}
\lim_{\beta\uparrow\infty}\lim_{t\uparrow\infty}\sup_{y\in \R^d}\E\big[Z^{t,\beta}_y\big|Z^{t,\beta}_y>0\big]&\le\lim_{\beta\uparrow\infty}\int_0^\infty\P\big(Y_{s} - g_\beta(s)\in B_{2R}(o)\big) \dx s\\
&=\int_0^\infty \lim_{\beta\uparrow\infty}\P\big(Y_{s}\in B_{2R}(g_\beta(s))\big) \dx s\\
&= \int_0^\infty \lim_{\beta\uparrow\infty}\P\big(s^{1/\alpha}Y_{1}\in B_{2R}(g_\beta(s))\big) \dx s\\
&= \int_0^\infty \lim_{\beta\uparrow\infty}\P\big(Y_{1}\in B_{2R/s^{1/\alpha}}(\beta s^{1-1/\alpha}\psi)\big) \dx s = 0,
\end{aligned}
\]
by dominated convergence with majorant $\I{\{s\leq1\}}+\P\big(Y_{s}\in B_{2R}(o)\big)$, which is integrable by transience of the process. In the last line we again used~\eqref{eq:self-similarity-alpha-stable}.
\end{proof}

Let us finish the section with the proof of \Cref{lem:Boolean-Model-under-shrinkage}. To do so, we first show two auxiliary results. %Recall that $o\in\R^d$ denotes the origin.
\begin{lemma}
    Let $x\in\R^d$, $0 \leq r_1 \leq r_2$ and $c\in [0,1]$. Then,
    \begin{equation} \label{eq:overlap-monotonicity}
        B_{r_1}(o)\cap B_{r_2}(x)\subset B_{r_1}(o)\cap B_{r_2}(cx).
    \end{equation}
\end{lemma}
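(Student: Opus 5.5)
We argue pointwise. Fix $y\in B_{r_1}(o)\cap B_{r_2}(x)$; we must show $|y-cx|<r_2$. We already know $|y|<r_1\le r_2$ and $|y-x|<r_2$. The whole argument rests on one observation: the map $c\mapsto |y-cx|^2$ is a convex function of $c$ on $[0,1]$, namely a quadratic with nonnegative leading coefficient $|x|^2$. A convex function on an interval attains its maximum at an endpoint, so
\[
|y-cx|^2\le \max\big\{|y|^2,\;|y-x|^2\big\}<r_2^2 ,
\]
where the two endpoint values come from $c=0$ and $c=1$.

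Equivalently, we can write $y-cx=(1-c)y+c(y-x)$ and apply the triangle inequality:
\[
|y-cx|\le (1-c)|y|+c|y-x|<(1-c)r_2+c\,r_2=r_2 .
\]
This gives $y\in B_{r_2}(cx)$. Since $y\in B_{r_1}(o)$ holds trivially, the inclusion~\eqref{eq:overlap-monotonicity} follows.

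There is no real obstacle here. The one point to check is where the hypothesis $r_1\le r_2$ enters: it is used exactly to bound the $c=0$ endpoint, $|y|<r_1\le r_2$. The degenerate cases need no separate treatment: $r_1=0$ gives an empty ball (because the balls are open), and $c=0$ or $c=1$ are already covered by the argument above.
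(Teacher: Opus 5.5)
Your proof is correct and is essentially the paper's argument: the paper likewise uses convexity of the norm to get $|cx-y|\le c|x-y|+(1-c)|y|<cr_2+(1-c)r_1\le r_2$, which is exactly your decomposition $y-cx=(1-c)y+c(y-x)$. Your first formulation, that the convex quadratic $c\mapsto|y-cx|^2$ is maximised at an endpoint of $[0,1]$, is just an equivalent rephrasing of the same idea.
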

\begin{proof}
     Let $y\in B_{r_1}(o)\cap B_{r_2}(x)$. Then, $\|y\|< r_1$ and $\|x-y\|< r_2$. By convexity of the norm and \(r_1\leq r_2\), we have
    \[
        D(c)=\|cx - y \| \leq c\|x-y\| + (1-c)\|y\| < c r_2 + (1-c)r_1 \leq r_2.
    \]
    Thus, $y\in B_{r_2}(cx)$, proving the claim.
\end{proof}

\begin{lemma}\label{lem:shrinking-boolean-model}
    Let $A\subset\R^d$ be a set and $c\in [0,1]$. Then, 
    $|B_r(A)|\geq |B_r(cA)|$.
\end{lemma}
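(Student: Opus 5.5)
The plan is to reduce the statement to a scaling monotonicity and then prove that monotonicity by a Voronoi decomposition. The reduction uses that, for $c\in(0,1]$ (the case $c=0$ is trivial, since $B_r(\{o\})\subset B_r(A)$ up to translation whenever $A\neq\emptyset$), we have the exact identity $B_r(cA)=c\,B_{r/c}(A)$. Hence
\[
|B_r(cA)| = c^d\,|B_{r/c}(A)|,
\]
and the claim $|B_r(A)|\ge |B_r(cA)|$ is equivalent to
\[
\frac{|B_{r/c}(A)|}{(r/c)^d}\le \frac{|B_r(A)|}{r^d}.
\]
So it suffices to show that $s\mapsto s^{-d}|B_s(A)|$ is nonincreasing on $(0,\infty)$. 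This monotonicity also immediately yields \Cref{lem:Boolean-Model-under-shrinkage}: take $s=r\le 1$ compared with $s=1$ for $r\le1$; for $r>1$ apply the same monotonicity in the other direction only if needed, or note that the paper presumably only uses $r$ relative to unit scale.

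\textbf{Finite sets.} First assume $A=\{a_1,\dots,a_n\}$ is finite. Let $V_i$ be the (closed, convex) Voronoi cell of $a_i$ with respect to $A$. The cells cover $\R^d$ and overlap only in Lebesgue-null sets. A point $y\in V_i$ lies in $B_s(A)$ iff $|y-a_i|<s$. Therefore
\[
|B_s(A)|=\sum_{i=1}^n |V_i\cap B_s(a_i)|.
\]
Each set $S_i(s):=V_i\cap B_s(a_i)$ is convex and contains $a_i$. Hence, for $s\le s'$, the homothety $y\mapsto a_i+\tfrac{s}{s'}(y-a_i)$ maps $S_i(s')$ into $S_i(s)$: it maps $V_i$ into itself by convexity and $B_{s'}(a_i)$ onto $B_s(a_i)$. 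This gives $(s/s')^d|S_i(s')|\le |S_i(s)|$, and summing over $i$ gives the monotonicity for finite $A$.

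\textbf{General sets.} Let $D=\{d_1,d_2,\dots\}\subset A$ be countable and dense in $A$. Since the balls are open, $B_s(A)=B_s(D)$. Then $B_s(\{d_1,\dots,d_n\})\uparrow B_s(D)$ as $n\to\infty$, so monotone convergence transfers the inequality $s'^{-d}|B_{s'}(A_n)|\le s^{-d}|B_s(A_n)|$ from the finite sets $A_n:=\{d_1,\dots,d_n\}$ to $A$. This also holds when the volumes are infinite, with the convention $\infty\le\infty$. No measurability issue arises for $A$ itself, since only the open set $B_s(A)$ is measured.

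The step I expect to be the main obstacle is realising that one should \emph{not} try to compare the unions $\bigcup_a B_r(a)$ and $\bigcup_a B_r(ca)$ ball by ball, e.g.\ via \eqref{eq:overlap-monotonicity}. Contracting centres while keeping radii fixed is in general a Kneser--Poulsen-type problem, which is hard. The scaling identity together with the star-shapedness of the Voronoi pieces circumvents this. If one insists on using \eqref{eq:overlap-monotonicity}, I would try applying it inside each Voronoi cell, but I expect the Voronoi/homothety argument above to be the cleanest route.
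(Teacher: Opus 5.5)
Your proof is correct, but it takes a different route from the paper.

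\textbf{The paper's route.} The paper keeps the radius $r$ fixed and argues on finite sets by induction on $|A|$. Moving the centre of the homothety only translates $cA$, so one may assume $x_0=o\in A$. Inclusion--exclusion then gives $|B_r(A)|=|B_r(o)|+|B_r(A_1)|-|B_r(o)\cap B_r(A_1)|$ with $A_1=A\setminus\{o\}$, and likewise for $cA$. The induction hypothesis gives $|B_r(cA_1)|\le|B_r(A_1)|$. Moreover, \eqref{eq:overlap-monotonicity} with $r_1=r_2=r$ gives $B_r(o)\cap B_r(A_1)\subset B_r(o)\cap B_r(cA_1)$, so the subtracted overlap term can only grow. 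General $A$ is then reached by compactness: cover $B_{r-\varepsilon}(A)$ by finitely many $r$-balls with centres in $A$, and let $\varepsilon\downarrow0$.

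So your expectation that a ball-by-ball comparison via \eqref{eq:overlap-monotonicity} runs into Kneser--Poulsen-type difficulties does not materialise. For a homothety, the fixed point can be placed at one of the centres, and this is exactly what makes the elementary induction close.

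\textbf{Your route.} You use the exact identity $B_r(cA)=c\,B_{r/c}(A)$ to recast the lemma as the equivalent statement that $s\mapsto s^{-d}|B_s(A)|$ is nonincreasing. You then prove this cell by cell from the star-shapedness of the Voronoi pieces $V_i\cap B_s(a_i)$ about $a_i$. This argument is non-inductive and shows the monotonicity locally in each cell. Your passage to general $A$, via a countable dense subset and continuity from below, is simpler than the paper's compactness step. It needs no reduction to bounded $A$ and handles infinite volumes uniformly.

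\textbf{A correction to your side remark.} Your remark on \Cref{lem:Boolean-Model-under-shrinkage} has the direction wrong. Monotonicity of $s^{-d}|B_s(A)|$ yields $|B_r(A)|\le r^d|B_1(A)|$ only for $r\ge1$. For $r\le1$ it gives the reverse inequality, and the bound genuinely fails there: take two points at distance $1$ and $r<1/2$. This is consistent with the paper, whose derivation uses $c=r^{-1}\in[0,1]$ and which applies the bound only to large radii.
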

\begin{proof}
    Let us first consider the case where $A$ consists of finitely many points, i.e., we have 
    $A=\{x_0,x_1,\dots, x_n\}$. We prove the claim by induction. The claim is trivial for singletons. After shifting, we may, without loss of generality, assume $x_0=o$. Write $A_1=A\backslash \{o\}$. Then, $|B_r(cA_1)|\leq |B_r(A_1)|$. Furthermore,
    \begin{align*}
    |B_r(A)| &= |B_r(o)| + |B_r(A_1)| - |B_r(o)\cap B_r(A_1)| \text{ and }\\
    |B_r(cA)| &= |B_r(o)| + |B_r(cA_1)| - |B_r(o)\cap B_r(cA_1)|.
    \end{align*}
    Thus, we only need to show that
    $$B_r(o)\cap B_r(A_1) \subset B_r(o)\cap B_r(cA_1).$$
    But this follows from \eqref{eq:overlap-monotonicity} via
    $$ B_r(o)\cap B_r(A_1) = \bigcup_{i=1}^n B_r(o)\cap B_r(x_i) \subset \bigcup_{i=1}^n B_r(o)\cap B_r(cx_i) = B_r(o)\cap B_r(cA_1)$$
    and so the claim holds true in the finite case.
    
    Let us move on to arbitrary $A\subset\R^d$. First, we may assume that $A$ is bounded, since otherwise $|B_r(A)| = |B_r(cA)|=\infty$. 
    Let $\eps>0$. Then, we find a finite set $I(\eps)\subset A$ such that 
    $$B_{r-\eps}(A)\subset B_r(I(\eps)),$$
    since the closure of $B_{r-\eps}(A)$ is compact and covered by $\{B_r(x)\}_{x\in A}$. We show 
    $$B_{r-\eps}(cA) \subset B_r(cI(\eps)).$$
    This will yield the claim since
    $$B_r(cA)\nwarrow_{\eps\downarrow 0} B_{r-\eps}(cA) \subset B_r(cI(\eps)) \quad\text{ and }\quad
    B_r(I(\eps)) \subset B_r(A),$$
    as well as $|B_r(cI(\eps))| \leq |B_r(I(\eps))|$ since $I(\eps)$ is finite. Let $a\in B_{r-\eps}(cA)$. Hence, we find some $x\in A$ such that $|a-cx|<r-\eps$. On the other hand, $|a+(1-c)x-x|=|a-cx|<r-\eps$ and thus $a+(1-c)\in B_{r-\eps}(A)$. Therefore, $a+(1-c)x\in B_{r}(y)$ for some $y\in I(\eps)$. As such, we need to show $a\in B_r(cy)$. Using~\eqref{eq:overlap-monotonicity}, we have 
    $$a-cx\in B_{r-\eps}(o)\cap B_r(y-x)\subset 
    B_{r-\eps}(o)\cap B_r(cy-cx),$$
    which implies $r>\|a-cx-cy+cy\|=\|a-cy\|$, i.e., $a\in B_r(cy)$. This finishes the proof.
\end{proof}
\begin{proof}[Proof of \Cref{lem:Boolean-Model-under-shrinkage}]
    The claim follows from \Cref{lem:shrinking-boolean-model}, which yields $|B_1(r^{-1}A)| \leq |B_1(A)|$. Thus, 
    \[|B_r(A)|=|r\cdot B_1(r^{-1}A)|=r^d |B_1(r^{-1}A)| \leq r^d|B_1(A)|,\]
   as desired.
\end{proof}

\subsection{Prerequisite results on $\alpha$-stable processes}
The isotropic $\alpha$-stable process has the Lévy intensity function (or jump activity)
\begin{equation}\label{eq:jump-kernel}
    \mathcal{J}(x,y) := C|x-y|^{-(d+\alpha)},\quad x,y\in\R^d,
\end{equation}
for some $C>0$.

\subsubsection{Heat-kernel bounds, parabolic Harnack inequality and  H\"older inequality} \label{sec:heat-kernel}

The following statement is an application of \cite[Theorem 1.2]{Chen2008} to the specific case of isotropic $\alpha$-stable processes on $\mathbb{R}^d$.
\begin{theorem}[Heat-kernel bounds]\label{thrm:heatkernel}
  The transition probability at time $t\ge0$ of an isotropic  $\alpha$-stable process on $\mathbb{R}^d$ with $\alpha\in (0,2]$ has a Lebesgue density $p(t,\cdot,\cdot)\colon  \R^d\times \R^d\to [0,1]$ such that there exist constants $c_1,c_2,C>0$ with
    \begin{align*}
        C^{-1}\Big(t^{-d/\alpha}\wedge c_1\frac{t}{|x-y|^{d+\alpha}}\Big)\leq p(t,x,y)
        \leq C\Big(t^{-d/\alpha}\wedge c_2\frac{t}{|x-y|^{d+\alpha}}\Big),\qquad\text{ for all }     t>0, x,y\in\R^d.
    \end{align*}

\end{theorem}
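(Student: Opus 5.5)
The heat-kernel bounds are a specialisation of \cite[Theorem 1.2]{Chen2008}, so the plan is to verify that the isotropic $\alpha$-stable process fits that framework. For $\alpha<2$ the process is the Hunt process of the pure-jump Dirichlet form
\[
\mathcal{E}(u,u)=\tfrac12\int_{\R^d}\int_{\R^d}(u(x)-u(y))^2\,\mathcal{J}(x,y)\,\dx x\,\dx y
\]
on $\R^d$ with Lebesgue measure. The jump kernel $\mathcal{J}$ from \eqref{eq:jump-kernel} is symmetric and comparable to $|x-y|^{-(d+\alpha)}$, which are exactly the standing hypotheses there. The underlying space $\R^d$ is Ahlfors $d$-regular, so $|B_r(x)|\asymp r^d$, and the scale function is $\phi(r)=r^\alpha$, a single power that trivially satisfies the required doubling and growth conditions.

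The Chen--Kumagai result then gives a jointly continuous transition density with two-sided bounds of the form
\[
p(t,x,y)\asymp \frac{1}{V(\phi^{-1}(t))}\wedge \frac{t}{V(|x-y|)\,\phi(|x-y|)}.
\]
Substituting $V(r)=c\,r^d$ and $\phi^{-1}(t)=t^{1/\alpha}$ yields $t^{-d/\alpha}\wedge t|x-y|^{-(d+\alpha)}$. The multiplicative constants can be split between $C$ and $c_1,c_2$ as written. The range $p\in[0,1]$ can be ignored in spirit, since it is only a codomain convention; one simply takes a density.

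For $\alpha=2$ the process is Brownian motion at double speed. Its kernel $(4\pi t)^{-d/2}e^{-|x-y|^2/(4t)}$ satisfies the upper bound directly, because $e^{-s}s^{(d+2)/2}$ is bounded. The stated lower bound, however, fails for Brownian motion at large distances, since the Gaussian decays faster than any polynomial. So I would restrict the substantive statement to $\alpha<2$, consistent with the paper's later remark that the decoupling is formulated only for $\alpha<2$ and that Gaussian bounds are used instead at $\alpha=2$.

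An alternative route for $\alpha<2$, avoiding the general theorem, is more elementary:
\begin{itemize}
\item[(a)] Use scaling \eqref{eq:self-similarity-alpha-stable} to write $p(t,x,y)=t^{-d/\alpha}p(1,0,t^{-1/\alpha}(y-x))$.
\item[(b)] Show that $p(1,0,z)$ is continuous and strictly positive, which handles the regime $|z|\le 1$.
\item[(c)] Show $p(1,0,z)\asymp|z|^{-d-\alpha}$ as $|z|\to\infty$, the classical Blumenthal--Getoor asymptotic.
\end{itemize}
Combining (b) and (c) gives $p(1,0,z)\asymp 1\wedge|z|^{-d-\alpha}$, and scaling via (a) returns the claimed bounds. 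The main obstacle on this route is the tail asymptotic in (c). I would prove it by subordinating Brownian motion with an $\alpha/2$-stable subordinator and using that subordinator's density tail $\asymp s^{-1-\alpha/2}$. Since the statement cites \cite{Chen2008}, I would simply invoke that result after checking its hypotheses.
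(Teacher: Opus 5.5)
Your main route is correct and is the paper's own argument: specialise the Chen--Kumagai theorem with $V(r)=r^d$ and $\phi(r)=r^\alpha$, then read off $p(t,x,y)\asymp t^{-d/\alpha}\wedge t|x-y|^{-(d+\alpha)}$. Your caveats are justified and go beyond the paper's proof, which silently includes $\alpha=2$: that theorem only covers pure-jump scale functions with exponent below $2$, the polynomial lower bound does fail for Brownian motion (only the upper bound survives there), and the codomain $[0,1]$ is inaccurate because $p(t,x,x)=t^{-d/\alpha}p(1,0,0)\to\infty$ as $t\downarrow 0$.
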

\begin{proof}
    In the notation of \cite{Chen2008}, we choose $V(r)=r^d$ and $\phi(r)=r^\alpha$. 
    Consequently, with $\rho$ being the Euclidean metric on $F=\R^d$, the jump activity satisfies Condition (1.9) from \cite{Chen2008}, i.e.,
    \[
        \mathcal{J}(x,y)\asymp \frac{1}{V(\rho(x,y))\phi(\rho(x,y))}.
    \]
    Then, \cite[Condition (1.1)]{Chen2008} is satisfied for the Euclidean metric on $\R^d$, as is Condition (1.8) that $\mu(B(x,r))\asymp V(r)$ for $x\in\R^d$ and $r>0$ (here, $\mu$ is the Lebesgue measure on $\R^d$). Condition (1.11) is also trivially satisfied for $V(r)=r^d$. In our case, $\phi(r)=r^\alpha$, so $\phi_1(r)=r^\alpha$ and $\psi(r)=1$, so in the notation of \cite{Chen2008}, Condition (1.12) is satisfied with $\gamma_1=\gamma_2=0$ and by \cite[Theorem 1.2]{Chen2008}, the heat-kernel bounds obtained hold for all $t>0$ and all $x,y\in\R^d$.
\end{proof}
We say that a non-negative Borel measurable function $h(t,x)$ on $[0,\infty)\times \R^d$ is {\em parabolic} in a relatively open subset $D$ of $[0,\infty)\times \R^d$ if, for every relatively compact open subset $D_1$ of $D$, $h(t,x)=\E^{(t,x)}[h(Z_{\tau_{D_1}})]$ for every $(t,x)\in D_1$, where $Z_s=(V_s,X_s)$ is the space-time process with $V_s=V_0+s$ for some $V_0\in\R$ and $\tau_{D_1}=\inf\{s>0\colon Z_s\not\in D_1\}$.  Note in particular that the transition density of the isotropic $\alpha$-stable process $p(t,x,y)$ is parabolic as a function of $t$ and $y$.

 For each $r,t>0$, $x\in \R^d$ and $\gamma\in(0,1/2)$ define the cylinder
\[
Q(t,x,r):=[t,t+\gamma r^\alpha]\times B_r(x).
\]
\begin{theorem}[Parabolic Harnack inequality]
   Consider the heat kernel $p$ as before. Then, there exists $c_1>0$ such that for every $z\in \R^d$, $r>0$ and every non-negative function $h$ on $[0,\infty)\times\R^d$ that is parabolic and bounded on $Q(0,x,2r)$, we have
    \[
        \sup_{(t,y)\in Q(r^\alpha,z,r)}h(t,y)\leq c_1\inf_{y\in B_r(z)}h(0,y).
    \]
    In particular, for $t>0$,
    \[
        \sup_{(s,y)\in Q((1-\gamma)t,z,t^{1/\alpha})}p(s,x,y)\leq c\inf_{y\in B_{t^{{1}/{\alpha}}}(z)}p((1+\gamma)t,x,y).
    \]
\end{theorem}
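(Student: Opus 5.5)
The plan is to derive the parabolic Harnack inequality from the general theory of \cite{Chen2008}, in the same way as \Cref{thrm:heatkernel}, and then to get the heat-kernel consequence by a space-time shift. In \cite{Chen2008} a parabolic Harnack inequality is proved for symmetric jump processes on metric measure spaces whose jump kernel satisfies $\mathcal J(x,y)\asymp 1/(V(\rho(x,y))\phi(\rho(x,y)))$. We have already checked these conditions with $V(r)=r^d$, $\phi(r)=r^\alpha$ and $\rho$ Euclidean when proving \Cref{thrm:heatkernel}. Those conditions are (1.1), (1.8), (1.9), (1.11) and (1.12) with $\gamma_1=\gamma_2=0$. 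I will therefore invoke that result, presumably \cite[Theorem 4.12]{Chen2008}, with time scale $\phi(r)=r^\alpha$. The cylinders there are $[t,t+\gamma\phi(r)]\times B_r(x)$, which matches our $Q(t,x,r)$. For a function $h$ that is non-negative, parabolic and bounded on $Q(0,z,2r)$, the theorem gives
\[
\sup_{Q(\gamma r^\alpha,z,r)}h\le c_1\inf_{B_r(z)}h(0,\cdot).
\]
Up to a harmless reparametrisation of the constants and the cylinder location (which I take as the intended reading), this is the first display. The constant does not depend on $r$ or $z$. This follows from scale invariance: by \eqref{eq:self-similarity-alpha-stable}, $h_r(t,y):=h(r^\alpha t, ry)$ is parabolic for the same process, so it suffices to treat $r=1$ and then rescale.

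Next comes the orientation issue. For our process, $p(t,x,y)$ viewed as a function of $(t,y)$ satisfies the forward equation. Since the isotropic process is symmetric, $p(t,x,y)=p(t,y,x)$, and so this is the same as parabolicity in the backward variable. To match the convention ``$h(t,x)=\E^{(t,x)}[h(Z_{\tau_{D_1}})]$'' with time running forward, I will apply the inequality to
\[
h(s,y):=p(T-s,x,y),\qquad T:=(1+\gamma)t,
\]
on $s\in[0,T)$. This function is parabolic by the Markov property (Chapman--Kolmogorov along the space-time process, with optional stopping at $\tau_{D_1}$). Reversing time in this way exchanges the roles of $\sup$ and $\inf$ in time. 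The point $s=0$ then corresponds to time $(1+\gamma)t$, the later time, where the infimum is taken. The supremum cylinder near $s\approx\gamma t$ corresponds to times around $t$, namely the interval $[(1-\gamma)t,\,t]$ after adjusting $\gamma$. So I take $r=t^{1/\alpha}$, which gives $r^\alpha=t$.

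Boundedness on the relevant cylinder $Q(0,z,2t^{1/\alpha})$ follows from \Cref{thrm:heatkernel}: there $p(s,x,y)\le C s^{-d/\alpha}\le C((1-\gamma)t)^{-d/\alpha}$, because all times involved are at least $(1-\gamma)t>0$ when $\gamma<1/2$. Substituting into the first display yields
\[
\sup_{(s,y)\in Q((1-\gamma)t,z,t^{1/\alpha})}p(s,x,y)\le c\inf_{y\in B_{t^{1/\alpha}}(z)}p((1+\gamma)t,x,y).
\]

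I expect the main obstacle to be the bookkeeping of the time direction and cylinder placement. The sup/inf orientation in \cite{Chen2008} must produce a later time in the infimum. The cylinder $[(1-\gamma)t,\,t]$ must fit inside the domain on which $h$ is parabolic, and it must stay strictly separated from the singular time $s=T$. If the literal cylinders do not nest, I will first try a chaining argument: apply the Harnack inequality finitely many times on overlapping cylinders. The number of steps depends only on $\gamma$, so the constant $c$ changes only by a factor depending on $\gamma$, $d$ and $\alpha$.
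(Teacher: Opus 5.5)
Your route is the paper's: it, too, just invokes \cite[Theorem~4.12]{Chen2008} under the conditions verified in the proof of \Cref{thrm:heatkernel}. Your time reversal $h(s,y)=p((1+\gamma)t-s,x,y)$ is the right way to obtain the heat-kernel display. The paper does not spell this step out, and its remark that $p$ is parabolic in $(t,y)$ is not literally correct for the backward notion of parabolicity it defines.

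The one step that is wrong as written is your boundedness check. The region on which $h$ must be parabolic and bounded is longer in $s$ than the supremum cylinder $[\gamma t,2\gamma t]$, so the heat-kernel times on it are not all $\ge(1-\gamma)t$. With your cylinder $Q(0,z,2t^{1/\alpha})=[0,2^\alpha\gamma t]\times B_{2t^{1/\alpha}}(z)$, they go down to $(1+\gamma-2^\alpha\gamma)t$. This is harmless while it stays positive, because only qualitative boundedness is needed. But for $\alpha$ near $2$ and $\gamma$ near $1/2$ the region reaches $s=T=(1+\gamma)t$. There $h$ is undefined, and if $x\in B_{2t^{1/\alpha}}(z)$ it is unbounded. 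So you must either restrict $\gamma$ or actually carry out the chaining you mention.

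A simpler fix avoids the Harnack inequality for the second display altogether. Up to constants, both bounds in \Cref{thrm:heatkernel} have the same shape $\Phi(s,\rho)=s^{-d/\alpha}\wedge s\rho^{-(d+\alpha)}$. Take $s,s'\in[(1-\gamma)t,(1+\gamma)t]\subset[t/2,3t/2]$ and $y,y'\in B_{t^{1/\alpha}}(z)$. First replace $s$ and $s'$ by $t$ at the cost of a constant. Then split cases. If $|x-y|\le 2t^{1/\alpha}$, both values of $\Phi$ are comparable to $t^{-d/\alpha}$. If $|x-y|>2t^{1/\alpha}$, then $|x-y'|\le 2|x-y|$. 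In both cases $\Phi(s,|x-y|)\le c\,\Phi(s',|x-y'|)$ with $c$ uniform in $\gamma\in(0,1/2)$.
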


\begin{proof}
    The statement is \cite[Theorem 4.12]{Chen2008} applied to our setting of isotropic $\alpha$-stable processes (as depicted in the proof of \Cref{thrm:heatkernel} with $\delta=1$).
\end{proof}

For our purposes, the key consequence of the parabolic Harnack inequality is that the heat kernel of an isotropic $\alpha$-stable process is H\"older continuous.

\begin{prop}[Hölder continuity]\label{prop:hoelder}
    For every $r_0>0$ there exist constants $c,\kappa>0$ such that for every $0<r<r_0$ and every bounded parabolic function $h$ on $Q(0,x_0,2r)$,
    \[
        |h(s,x)-h(t,y)|\leq c\|h\|_{\infty,\R^d} r^{-\kappa}(|t-s|^{1/\alpha}+|x-y|)^\kappa
    \]
    holds for $(s,x)$, $(t,y)\in Q(0,x_0,r)$, where $$\|h\|_{\infty,\R^d}:=\sup_{(t,y)\in[0,\gamma (2r)^\alpha]\times \R^d}|h(t,y)|.$$ 
    In particular, for $p$ and any $T>0$ and $t_0\in(0,T)$, there exist constants $c>0$ and $\kappa>0$ such that for any $t,s\in [t_0,T]$ and $x_i,y_i\in \R^d\times \R^d$ with $i=1,2,$
    \[
        |p(s,x_1,y_1)-p(t,x_2,y_2)|\leq c t_0^{-(d+\kappa)/\alpha}(|t-s|^{1/\alpha}+|x_1-x_2|+|y_1-y_2|)^\kappa.
    \]
\end{prop}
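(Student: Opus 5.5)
The plan has two parts: derive the general Hölder estimate from the parabolic Harnack inequality by the Moser oscillation-reduction scheme, and then specialise it to the heat kernel $p$ using \Cref{thrm:heatkernel} and scaling.

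\emph{General estimate.} Write $Q_k:=Q(t_k,x_0,\eta^k r)$ for a nested family of cylinders shrinking towards a point $(s,x)\in Q(0,x_0,r)$, where $\eta\in(0,1/2)$ is a fixed ratio and the times $t_k$ are chosen so that each $Q_{k+1}$ lies in the upper part of $Q_k$, as required by the parabolic Harnack inequality. Set $M_k:=\sup_{Q_k}h$ and $m_k:=\inf_{Q_k}h$.

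For the step from $Q_k$ to $Q_{k+1}$, apply the parabolic Harnack inequality to the two functions $h-m_k$ and $M_k-h$. Both are non-negative and parabolic on $Q_k$.

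There is one nonlocal complication. Parabolicity is defined through the exit distribution, which may place the process far outside $Q_k$, so these functions are not non-negative outside $Q_k$. I would handle this as in Chen--Kumagai: use the jump kernel \eqref{eq:jump-kernel} to bound the contribution of jumps landing outside $B_{\eta^k r}$ by a tail term of order $\sum_j \eta^{(k-j)\alpha}\operatorname{osc}_{Q_j}h$. With Harnack this gives
\[
\operatorname{osc}_{Q_{k+1}}h\le(1-c_1^{-1})\operatorname{osc}_{Q_k}h+C\sum_{j\le k}\eta^{(k-j)\alpha}\operatorname{osc}_{Q_j}h .
\]
The sum $\sum_j$ runs over the earlier cylinders, and for $j=0$ the oscillation is bounded by $2\|h\|_{\infty,\R^d}$. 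Choosing $\eta$ small makes the tail term small, and induction then yields $\operatorname{osc}_{Q_k}h\le 2\|h\|_{\infty,\R^d}\,\theta^k$ for some $\theta<1$.

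Given two points at parabolic distance $\rho=|t-s|^{1/\alpha}+|x-y|$, pick $k$ with $\eta^{k+1}r\le\rho<\eta^k r$. This gives the stated bound with $\kappa=\log\theta/\log\eta$, and the constants are uniform for $r<r_0$. This iteration, specifically controlling the nonlocal tail so that the oscillation still decays geometrically, is the main obstacle. I would essentially follow \cite[Theorem 4.14]{Chen2008}.

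\emph{Heat-kernel statement.} For fixed $x_1$, the map $(t,y)\mapsto p(t,x_1,y)$ is parabolic, and by symmetry so is the map in $x$.

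By \Cref{thrm:heatkernel}, on times $\ge t_0/2$ we have $\|p\|_\infty\le Ct_0^{-d/\alpha}$. Apply the general estimate on a cylinder of radius $r\asymp t_0^{1/\alpha}$, started at time $t_0/2$, so that $s,t\ge t_0$ lie inside it. This gives
\[
|p(s,x_1,y_1)-p(t,x_1,y_2)|\le c\,t_0^{-d/\alpha}\,t_0^{-\kappa/\alpha}\bigl(|t-s|^{1/\alpha}+|y_1-y_2|\bigr)^\kappa
\]
whenever the points are within distance $r$ of each other.

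If $|t-s|^{1/\alpha}+|y_1-y_2|\ge r$, the bound holds trivially, since the left side is at most $2Ct_0^{-d/\alpha}$ and the right side is then at least a constant multiple of $t_0^{-d/\alpha}$. Times far apart within $[t_0,T]$ are handled by chaining, with constants depending on $T$.

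Finally, combine the $y$-estimate with the same estimate in $x$, using $p(t,x,y)=p(t,y,x)$, via the triangle inequality. This produces the claimed bound with exponent $t_0^{-(d+\kappa)/\alpha}$.
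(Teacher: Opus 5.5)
The paper gives no argument of its own here. Having checked in the proof of \Cref{thrm:heatkernel} that the isotropic $\alpha$-stable process fits the framework of \cite{Chen2008} ($V(r)=r^d$, $\phi(r)=r^\alpha$, jump kernel comparable to $|x-y|^{-d-\alpha}$), it simply quotes \cite[Proposition 4.14]{Chen2008}, whose ``in particular'' clause already is the heat-kernel bound.

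You instead reconstruct both parts. This makes the argument self-contained, given the Harnack inequality and a L\'evy-system bound on long jumps. It also shows where $t_0^{-(d+\kappa)/\alpha}$ comes from: the bound $\sup_{s\ge t_0/2}p(s,\cdot,\cdot)\le Ct_0^{-d/\alpha}$ from \Cref{thrm:heatkernel} times $r^{-\kappa}$ with $r\asymp t_0^{1/\alpha}$, followed by symmetry and the triangle inequality.

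That heat-kernel part is correct, with two small remarks. The chaining is superfluous, because the trivial bound already covers far-apart points. The threshold for the case split should use the cylinder height $\gamma r^\alpha$ rather than $r^\alpha$, so that both points fit into one cylinder.

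Also note that the proof behind the cited proposition does not drive the induction by the Harnack inequality itself. It uses, in the Bass--Levin style, a hitting estimate for sets filling a fixed fraction of a cylinder together with a L\'evy-system tail bound. Your Harnack-based route is a standard alternative, but then the details of the oscillation step matter.

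The recursion you display does not work as written. The sum over $j\le k$ contains the term $j=k$ with weight $\eta^0=1$, i.e.\ $C\operatorname{osc}_{Q_k}h$. No choice of $\eta$ makes this small, so the contraction is lost. Here is how to repair it:
\begin{itemize}
\item Only landing outside $Q_k$ produces an error, so the sum must run over $j<k$.
\item To get weights $\eta^{(k-j)\alpha}$, apply the Harnack inequality on a cylinder of radius about $2\eta^{k+1}r$ inside $Q_k$, with $Q_{k+1}$ as its inner part. From there, leaving $B_{\eta^k r}$ before exiting has probability $O(\eta^\alpha)$. If you apply it on $Q_k$ itself, there is an $O(1)$ chance of landing in $Q_{k-1}\setminus Q_k$.
\item The Harnack inequality is stated for functions that are non-negative on all of $[0,\infty)\times\R^d$. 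So apply it to the parabolic extension $\E^{(\cdot)}[(h-m_k)^+(Z_{\tau})]$ of the positive part, where $\tau$ is the exit time of the small cylinder. Treat the contribution of $(h-m_k)^-$ at the exit point as the error.
\end{itemize}
With these adjustments you obtain $\operatorname{osc}_{Q_{k+1}}h\le(1-c)\operatorname{osc}_{Q_k}h+C\sum_{j<k}\eta^{(k-j)\alpha}\operatorname{osc}_{Q_j}h$, with the outermost term bounded via $2\|h\|_{\infty,\R^d}$. The induction then closes for $\eta$ small.
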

\begin{proof}
    Again, this statement is \cite[Proposition 4.14]{Chen2008} applied to our setting of isotropic $\alpha$-stable processes (as depicted in the proof of \Cref{thrm:heatkernel}).
\end{proof}
\subsubsection{Escape and hitting probabilities}
We will need bounds on the probabilities of either escaping some ball of radius $r$ or hitting some far-away ball of radius $r$.
\begin{prop}[Escape probability]\label{prop:exitTime} 
Let $\tau_r:=\inf\{t>0\colon | X_t|\geq r\}$ be the first time the isotropic $\alpha$-stable process with $\alpha<2$ is at least $r$ away from its starting location $o$. Then, there exists $C>0$ such that
    \[
        \P(\tau_{r}\leq t)\le C r^{-\alpha} t,\qquad\text{ for all } r>0,t<r^\alpha.
    \]
\end{prop}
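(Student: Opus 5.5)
By the scaling relation~\eqref{eq:self-similarity-alpha-stable}, $\P(\tau_r\le t)=\P(\tau_1\le t r^{-\alpha})$. It therefore suffices to show $\P(\tau_1\le s)\le Cs$ for all $s<1$. For $s$ bounded away from zero this is trivial after enlarging $C$, so the real content is the regime $s\to0$. I would prove it with a reflection-type argument based on the strong Markov property, combined with the tail of the marginal law.

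The first step is to pass from the running supremum to the endpoint. On $\{\tau_1\le s\}$ we have $|X_{\tau_1}|\ge1$ by right-continuity. If in addition $|X_s-X_{\tau_1}|<1/2$, then $|X_s|>1/2$. By the strong Markov property, $X_{\tau_1+u}-X_{\tau_1}$ is a fresh copy of $X$ independent of $\mathcal F_{\tau_1}$. This gives
\[
\P(|X_s|>1/2)\ \ge\ \P(\tau_1\le s)\,\inf_{u\le s}\P(|X_u|<1/2).
\]
By scaling, $\P(|X_u|<1/2)=\P(|X_1|<u^{-1/\alpha}/2)\ge\P(|X_1|<1/2)>0$ for $u\le 1$, since $X_1$ has a positive density. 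Hence
\[
\P(\tau_1\le s)\le c\,\P(|X_s|>1/2).
\]

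The second step is to bound the endpoint probability using \Cref{thrm:heatkernel}:
\[
\P(|X_s|>1/2)\le \int_{|y|>1/2} C c_2\, s|y|^{-(d+\alpha)}\,\dx y = C' s .
\]
This integral converges since $\alpha>0$. Combining the two steps and undoing the scaling yields $\P(\tau_r\le t)\le Cr^{-\alpha}t$ whenever $t<r^\alpha$. (For $t\ge r^\alpha$ the bound is trivially true anyway once $C\ge1$.)

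The main point to handle with care is the strong Markov step. Here $\tau_1$ is a stopping time of the natural filtration, the increment after $\tau_1$ over the remaining time $s-\tau_1\le s$ has the law of $X_{s-\tau_1}$, and the lower bound on $\P(|X_u|<1/2)$ must hold uniformly in $u\le s\le1$; scaling gives this uniformity. An alternative route is a Lévy-system/compensator argument: jumps of size at least $1/2$ occur at rate $\int_{|z|>1/2}\mathcal J(0,z)\,\dx z\propto 1$, which bounds the probability of a large jump by $Cs$. The small-jump part would then be controlled by a second-moment bound. I would keep this only as a fallback, since the reflection argument together with the heat-kernel upper bound is shorter.
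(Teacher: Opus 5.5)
Your proof is correct, but it takes a different route from the paper. The paper obtains the bound directly from Pruitt's maximal inequality for general Lévy processes, $\P(\sup_{s\le t}|X_s|\ge r)\le c\min\{1,t\,h(r)\}$, where $h(r)=K(r)+L(r)$ with $K(r)=r^{-2}\int_{|z|\le r}|z|^2\mathcal J(o,z)\,\dx z$ and $L(r)=\int_{|z|>r}\mathcal J(o,z)\,\dx z$. The third, drift-type term in Pruitt's bound vanishes by rotational invariance, and $h(r)\asymp r^{-\alpha}$ follows from $\mathcal J(o,z)=C|z|^{-(d+\alpha)}$. You instead reduce to $r=1$ by self-similarity, turn the running maximum into an endpoint probability by an Ottaviani/Lévy-type argument at $\tau_1$, and then integrate the heat-kernel upper bound of \Cref{thrm:heatkernel} over $\{|y|>1/2\}$. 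Each of these steps is sound. The only pedantic point is that $\tau_1$ is a stopping time for the usual augmentation of the natural filtration, which is where the strong Markov property of Lévy processes holds.

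As for what each route buys: the paper's argument uses only the Lévy measure. It would therefore apply unchanged to any symmetric pure-jump Lévy process with jump kernel comparable to $|z|^{-(d+\alpha)}$, with no exact scaling and no heat-kernel input, at the cost of an external black box. Your argument is self-contained given what the paper already states; even the marginal tail $\P(|X_1|>x)\le Cx^{-\alpha}$ could replace the heat-kernel integral. It also does not rely on symmetry. It does, however, depend on exact self-similarity, both for the reduction to $r=1$ and for the uniform lower bound $\inf_{u\le 1}\P(|X_u|<1/2)\ge\P(|X_1|<1/2)>0$.
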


\begin{proof}
We first check that its jump activity $\mathcal{J}$ satisfies
\[
    \int\frac{|z|^2}{1+|z|^2}\mathcal{J}(o,z)\dx z<\infty
\]
for every $\alpha<2$. Our process starts in the origin by assumption, so
\cite[Equation (3.2)]{Pruitt1981} gives for every $t>0, r>0$,
\[
\mathbb{P}(\tau_r\leq t)
=\mathbb{P}\Big(\sup_{s\le t}|X_s|\geq r\Big)
 \leq c_2\min\{1, th(r)\},
\]
where $h(r)=K(r)+L(r)$ with
$K(r)=\int_{|z|\leq r}r^{-2}|z|^2 \mathcal{J}(o,z)\dx z$ and
$L(r)=\int_{|z|>r}\mathcal{J}(o,z)\dx z$ (the third term in \cite{Pruitt1981} vanishes due to rotational invariance).
Thus, the isotropic $\alpha$-stable process gives $h(r)\asymp r^{-\alpha}$.
Hence, there exists $C>0$ such that, uniformly for arbitrary $t$ satisfying $t\le r^{\alpha}$,
\[
\mathbb{P}(\tau_r\le t)\le C r^{-\alpha}t,
\]
as desired.
\end{proof}

The following result estimates how far the $\alpha$-stable process $X$ can travel without large jumps.  For this, we  decompose $X$ into independent processes 
$$X=X'+X'',$$ 
where $X'$ has jump activity $\mathcal{J}(x,y)\I\{|x-y|\geq1\}$ and $X''$ has jump activity $\mathcal{J}(x,y)\I\{|x-y|<1\}$. Thus, $X'$ is a {\em compound Poisson process} of finite activity $\int_{B_1(o)^\complement}\mathcal{J}(o,x)\dx x\in(0,\infty)$ only consisting of jumps of size $\geq1$. 
On the other hand, $X''$ is a pure-jump martingale having only jumps of size less than $1$, i.e., it is the truncated $\alpha$-stable process. 

\begin{lemma}[Escape probability of truncated process]
     Let $r\geq0$ be fixed and $\alpha\in(0,2)$. Then, there are constants $C,C',\kappa>0$, such that
    $$\P\big(\max_{s\leq t}|X''_s|\geq L\big) \leq C \exp\big(-\kappa (L-C't)\big),\qquad \text{ for all } L\geq0, t\geq 1.$$
\end{lemma}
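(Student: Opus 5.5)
We prove the bound with an exponential-martingale (Chernoff) argument for the truncated process $X''$, combined with Doob's maximal inequality and a union bound over directions. Since $X''$ is a Lévy process with bounded jumps, it has exponential moments of every order. More precisely, for $\theta\in\R^d$, the Lévy--Khintchine formula gives $\E\exp(\langle\theta,X''_t\rangle)=\exp(t\Phi(\theta))$, where
\[
\Phi(\theta)=\int_{B_1(o)}\big(e^{\langle\theta,z\rangle}-1-\langle\theta,z\rangle\big)\mathcal{J}(o,z)\dx z .
\]
There is no drift term, because the truncated jump measure is symmetric. Using $e^u-1-u\le \tfrac{u^2}{2}e^{|u|}$ and $|z|<1$, we get $\Phi(\theta)\le \tfrac12|\theta|^2e^{|\theta|}\int_{B_1(o)}|z|^2\mathcal{J}(o,z)\dx z<\infty$, since $\alpha<2$. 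Fix once and for all $\kappa'=1$, so that $\Phi(\theta)\le C_0$ for all unit vectors $\theta$, with $C_0$ depending only on $d,\alpha$.

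Next we reduce to one-dimensional projections. Let $e_1,\dots,e_d$ be the coordinate directions. If $|X''_s|\ge L$, then some coordinate satisfies $|\langle e_j,X''_s\rangle|\ge L/\sqrt d$. Hence
\[
\P\big(\max_{s\le t}|X''_s|\ge L\big)\le \sum_{j=1}^d\sum_{\pm}\P\big(\max_{s\le t}\langle \pm e_j,X''_s\rangle\ge L/\sqrt d\big).
\]
Fix a direction $\theta=\pm e_j$. The process $M_s=\exp(\langle\theta,X''_s\rangle-s\Phi(\theta))$ is a positive martingale with mean $1$, so $\exp(\langle\theta,X''_s\rangle)$ is a nonnegative submartingale. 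Doob's maximal inequality then yields
\[
\P\big(\max_{s\le t}\langle\theta,X''_s\rangle\ge \ell\big)\le e^{-\ell}\,\E e^{\langle\theta,X''_t\rangle}\le \exp(-\ell+C_0t).
\]

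Summing over the $2d$ directions with $\ell=L/\sqrt d$ gives
\[
\P\big(\max_{s\le t}|X''_s|\ge L\big)\le 2d\exp\big(-(L-\sqrt d\,C_0t)/\sqrt d\big).
\]
This is the claim with $C=2d$, $\kappa=1/\sqrt d$ and $C'=\sqrt d\,C_0$. The estimate in fact holds for all $t\ge0$, and neither $r$ nor the restriction $t\ge1$ is needed.

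The only real work is checking that $\Phi$ is finite and bounded on the unit sphere. This uses only that the truncated jump measure has finite second moment near the origin, which holds exactly because $\alpha<2$, and that it has bounded support. If one prefers to avoid Lévy--Khintchine, an alternative is to use that $X''$ has bounded jumps and apply a standard exponential martingale inequality for martingales with bounded jumps, for example Freedman-type or Bernstein-type bounds. This yields the same linear-in-$t$ correction $C't$.
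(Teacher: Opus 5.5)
Your proof is correct, but it takes a different route from the paper. The paper also finishes with Doob's inequality, applied to the radial submartingale $\exp(\kappa|X''_s|)$. It then bounds $\E\exp(\kappa|X''_t|)$ by integrating against the transition density $p^-$ of the truncated process, using the heat-kernel estimates of Chen--Kim--Kumagai. On $|x|\le C^*t$ it uses the on-diagonal bound $p^-(t,o,x)\le c_1t^{-d/2}$; this bound holds only for $t\ge1$, which is where that restriction in the statement comes from. On $|x|\ge C^*t$ it uses the off-diagonal bound $p^-(t,o,x)\le c_1\exp(-c_2|x|\log(|x|/t))$.

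You obtain the exponential moment directly from the Lévy--Khintchine formula for a Lévy measure with bounded support. Finiteness of $\Phi$ needs only $\int_{B_1(o)}|z|^2\mathcal{J}(o,z)\,\dx z<\infty$, that is $\alpha<2$, and no density estimates at all. The price is a union bound over the $2d$ coordinate directions, which only affects the constants.

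Your argument is more elementary and self-contained, and it gives explicit constants. It also works for all $t\ge0$ and avoids the stray polynomial prefactor $t^{d/2-1}$ that appears in the paper's final display and has to be absorbed into $C'$. The paper's route reuses heat-kernel machinery it needs elsewhere anyway and handles the Euclidean norm directly without projecting.
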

\begin{proof}
     Denote the heat kernel of $X''$ by $p^-(t,y,x)$.
     By~\cite[Theorem~2.3]{ChenKimKumagai2011}, there exist $C^*,c_1,c_2>0$  such that for every $|x|\geq C^*t$, we have 
    $$p^-(t,o,x) \leq c_1 \exp\big(-c_2 |x|\log(|x|/t)\big) \leq c_1 \exp\big(-2 \kappa |x|\big), \quad\text{ where }
    \kappa:=\tfrac{1}{2}c_3\log C^*.$$ 
    On the other hand, we also have for every $t\geq 1$ and every $x\in\R^d$ by~\cite[Proposition~2.2]{ChenKimKumagai2011} that
    $$p^-(t,o,x) \leq c_1 t^{-d/2}.$$
    We use Doob's martingale inequality to bound for all $\kappa>0$
    $$\P\big(\max_{s\leq t}|X''_s|\geq \ell\big) \leq \exp(-\kappa \ell)\E \exp(\kappa|X''_t|).$$
    Thus, with constants $C>0$ only depending on $d,C^*,\kappa$ changing from line to line
        \begin{align*}
            \E\exp(\kappa|X''_t|) &= \int_{\R^d}p^-(t,o,x)\exp(\kappa |x|)\dx x \\
            &\leq \int_{|x|\leq C^*t}c_1 t^{-d/2} \exp(\kappa |x|) \dx x + \int_{|x|\geq C^*t} c_1 \exp(-2\kappa |x|) \exp(\kappa |x|) \dx x\\
            &\leq C t^{-d/2}\int_0^{C^*t} z^{d-1} \exp(\kappa z)\dx z + C \int_{0}^\infty  z^{d-1}\exp(-\kappa z)\dx z\\
            &\leq C t^{d/2-1}\exp(\kappa C^*t) + C \leq C \exp(2\kappa C^*t).
        \end{align*}
    Therefore, we conclude
    $$\P\big(\max_{s\leq t}|X''_s|\geq L\big) \leq \exp(-\kappa L)\E\exp(\kappa|X''_t|) \leq C t^{d/2-1}\exp\big(-\kappa (L-2C^*t)\big),$$
    as desired.
\end{proof}

\begin{lemma}[Hitting probability of far-away sets]\label{lem:hitting-probability}
    There exist $C,C',\kappa>0$ (only depending on $d$ and $\alpha$) such that for every $t\geq 1$, $x\in\R^d$ and $r,L>0$ with $r+L\leq |x|/6$, we have
    $$\P\big(\exists s\leq t\colon  X_s\in B_r(x)\big) \leq C \Big( |x|^{-(d+\alpha)}t^2(r+L)^d + \exp\big(-\kappa (L/2-C't)\big) \Big).$$
    In particular, by choosing $L:=t\log |x|$ and if $|x|\geq t$, we have asymptotically as $t\uparrow\infty$ that
    \[\label{eq:hitting-probability}
    \P\big(\exists s\leq t\colon  X_s\in B_r(x)\big) \leq C |x|^{-d-\alpha+o(1)}t^{2+d}.
    \]
\end{lemma}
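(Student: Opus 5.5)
We split the process as $X=X'+X''$, into the compound Poisson part $X'$ with jumps of size at least $1$ and the truncated part $X''$ with jumps of size less than $1$. Write $N_t$ for the number of jumps of $X'$ up to time $t$; it is Poisson with mean $\nu t$, where $\nu:=\int_{B_1(o)^\complement}\mathcal J(o,z)\,\dx z$. We intersect the hitting event with two good events. The first is $E_1:=\{\max_{s\le t}|X''_s|<L/2\}$. By the escape lemma for the truncated process, $\P(E_1^\complement)\le C\exp(-\kappa(L/2-C't))$, which produces the second error term. On $E_1$, if $X_s\in B_r(x)$ for some $s\le t$, then $X'_s\in B_{r+L/2}(x)$. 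Since $r+L\le |x|/6$ and $X'$ starts at $o$, the compound Poisson path must then have travelled a distance of at least $5|x|/6$ by time $t$.

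The main term comes from counting how $X'$ can reach $B_{r+L/2}(x)$. The jumps of $X'$ are i.i.d.\ with density $\mathcal J(o,z)/\nu$ on $|z|\ge1$, which is bounded by $C|z|^{-(d+\alpha)}$. We distinguish two cases.

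\emph{One big jump.} Suppose $X'$ makes a jump that lands it in $B_{r+L/2}(x)$. Condition on the position $X'_{u-}$ just before the jump at time $u$. If $|X'_{u-}|\le |x|/2$, the jump must have length at least $|x|/3$ and must land in a ball of volume $\asymp (r+L)^d$. The landing probability is therefore at most $C|x|^{-(d+\alpha)}(r+L)^d$. Summing over the expected number $\nu t$ of jumps gives a contribution $C t|x|^{-(d+\alpha)}(r+L)^d$.

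\emph{Far before the jump.} If instead $|X'_{u-}|>|x|/2$ at the jump, then some earlier jump took $X'$ far away. Handling this case is what I expect to produce the $t^2$. The plan is to use the heat-kernel upper bound of \Cref{thrm:heatkernel}, $p(s,o,y)\le Cs|y|^{-(d+\alpha)}$ for $|y|$ large. Equivalently, one can sum over ordered pairs of jumps. We bound the probability that $X'$ hits $B_{r+L/2}(x)$ during $[0,t]$ by the expected occupation of an enlarged ball $B_{r+L}(x)$ over a time window of length $t$, divided by a lower bound on the expected time spent there after hitting. That lower bound is of order $1$ and holds because the jump rate of $X'$ is finite, so after hitting it stays put for an $\mathrm{Exp}(\nu)$ time. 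The expected occupation is at most
\[
\int_0^{2t}\P\big(X'_s\in B_{r+L}(x)\big)\dx s\le C t^2|x|^{-(d+\alpha)}(r+L)^d .
\]
Here I would use the heat-kernel-type bound for $X'$ from \Cref{thrm:heatkernel} applied to $X$, together with $E_1$, to pass between $X$ and $X'$. The point that needs care is that this estimate is valid only away from the origin, i.e.\ at $|y|\ge |x|/2$. Making this occupation-time argument rigorous for a jump process is the main obstacle. A fallback is a direct union bound over jump indices: the probability that one of the at most $N_t$ jumps is larger than $|x|/(3N_t)$ and lands in the ball. This needs $N_t\le Ct$ with high probability, with the exceptional event absorbed into the exponential term since $t\ge1$. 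Combining both cases with $E_1$ gives the first claim.

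For the corollary, set $L=t\log|x|$ in the bound. We assume $|x|$ is large enough that $r+L\le|x|/6$; otherwise the bound is trivial. Then $(r+L)^d\le C t^d(\log|x|)^d=t^d|x|^{o(1)}$. In the exponential term, $L/2-C't\ge \tfrac{t}{4}\log|x|$ once $\log|x|>4C'$, so
\[
\exp\big(-\kappa (L/2-C't)\big)\le |x|^{-\kappa t/4},
\]
which is $o(|x|^{-d-\alpha})$ as $t\uparrow\infty$. Adding the two terms gives $C|x|^{-d-\alpha+o(1)}t^{2+d}$.
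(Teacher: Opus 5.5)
Your main route is correct, but it differs from the paper's. The paper uses a path argument to reduce to the event that $X$ makes a jump of size at least $1$ landing in $B_{r+L}(x)$. It then bounds the expected number of such jumps by the Lévy system formula $\int_0^t\int p(s,o,y)\int\1\{y+z\in B_{r+L}(x)\}\mathcal J(y,y+z)\,\mathrm dz\,\mathrm dy\,\mathrm ds$ and splits by jump size. Jumps of size at least $|x|/3$ give the factor $t$. For jumps of size in $[1,|x|/3]$ the pre-jump position lies at distance at least $|x|/2$ from $o$, so $p(s,o,y)\le Cs|x|^{-(d+\alpha)}$ together with the finite big-jump rate gives $t^2$.

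You instead use the simpler inclusion $X'_s=X_s-X''_s$ and control the compound Poisson part by occupation time. The step you call the main obstacle is routine. If $\tau'\le t$ is the entrance time of $X'$ into $B_{r+L/2}(x)$, then $X'$ sits at $X'_{\tau'}$ for an independent $\mathrm{Exp}(\nu)$ holding time, so for $t\ge1$
\[
\tfrac{1-e^{-\nu}}{\nu}\,\P(\tau'\le t)\le \int_0^{2t}\P\big(X'_s\in B_{r+L/2}(x)\big)\,\mathrm ds .
\]
The transfer from $X$ to $X'$, which you leave vague, is just independence of $X'$ and $X''$ at fixed $s$:
$\P(X'_s\in B_{r+L/2}(x))\,\P(|X''_s|<L/2)\le\P(X_s\in B_{r+L}(x))\le Cs(r+L)^d|x|^{-(d+\alpha)}$.
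This holds because $B_{r+L}(x)$ stays at distance at least $5|x|/6$ from $o$. You may assume $\P(\max_{s\le 2t}|X''_s|\ge L/2)\le 1/2$, since otherwise the claim is trivial after enlarging $C$ and doubling $C'$.

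This single bound covers every configuration, so your split into ``one big jump'' and ``far before the jump'' is superfluous. Compared with the paper, your version avoids the Lévy system formula, the jump-size split and the path argument. The price is the independence transfer and the trivial-case reduction, whereas the paper works directly with the density of $X$.

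Drop the fallback, because it does not work. The jump landing in the ball need not be the large one. Moreover, the probability that some jump exceeds $|x|/(3N_t)$ decays only like $|x|^{-\alpha}$ and carries no volume factor, so it cannot produce $(r+L)^d|x|^{-(d+\alpha)}$.

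Your derivation of the ``in particular'' statement is fine; the paper does not spell this out. The reduction to $r+t\log|x|\le|x|/6$ is legitimate. Otherwise $|x|\lesssim t\log t$, and the right-hand side is at least of order $t^{2-\alpha}(\log t)^{-(d+\alpha)}\to\infty$, using $\alpha<2$.
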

\begin{proof}
First, we note that
    \begin{align*}
            \big\{\exists &s\leq t\colon X_s\in B_r(x)\big\}\\
            &            \subset \Big\{\max_{s\leq t}|X''_s|\geq L/2 \Big\} \cup\big\{X \text{ has a jump of size } \geq1 \text{ into } B_{r+L}(x)\text{ before time }t\big\}.
    \end{align*}
    This is due to the fact that if $X_s$ hits $B_r(x)$ without having a jump of size at least $\geq1$ into $B_{r+L}(x)$, then $X$ had to move from $\R^d\backslash B_{r+L}(x)$ to $B_r(x)$ without any jumps of size $\geq1$. This implies that the truncated process $X''$ has two points of distance at least $L$ to each other up to time $t$, which shows the set inclusion of the events.
    The probability of the first event can be estimated using the previous lemma, yielding 
    $$\P\big(\max_{s\leq t}|X''_s|\geq L/2\big) \leq C \exp\big(-\kappa (L/2-C't)\big).$$
    
    For the probability of the second event, we use several steps. First, note that $X$ having a jump of size $\geq1$ is the same as $X'$ performing such a jump.
    The result will follow from the Markov inequality and Markov property of the jump process as follows.

    Let us split the event 
    $\{X \text{ has a jump of size} \geq1 \text{ into } B_{r+L}(x)\}$
    into the cases where the jump has size $\geq |x|/3$ or not.
    If the jump has size $\geq|x|/3$, then with constants $C$ changing from line to line (independent of $t,L,x,r$ but not $d,\alpha$),
        \begin{equation*}
        \begin{aligned}
            \P\big(X &\text{ has jump of size} \geq|x|/3 \text{ into } B_{r+L}(x)\big) 
            \leq \E\big[\#\{\text{jumps of size} \geq|x|/3 \text{ into }B_{r+L}(x)\}\big]\\
            &= \int_0^t \int_{\R^d} \E\big[\#\{\text{jumps of size } \geq|x|/3 \text{ into }B_{r+L}(x)\} \text{ from }y \text{ at time }s\big] \dx y \,\dx s\\
            &= \int_0^t \int_{\R^d}  p(s,o,y) \int_{\R^d}\I\{y+z\in B_{r+L}(x)\}\I\{|z|\geq |x|/3\} \mathcal{J}(y,y+z) \I\{|z|\geq1\}\dx z\,\dx y \,\dx s\\
            &\leq C \int_0^t  \int_{\R^d} p(s,o,y) \int_{\R^d} \I\{y+z\in B_{r+L}(x)\}\I\{|z|\geq|x|/3]\} |z|^{-(d+\alpha)}\dx z\, \dx y \,\dx s \\
            &\leq C |B_{r+L}(o)| (|x|/3)^{-(d+\alpha)}\int_0^t   \int_{\R^d} p(s,o,y) \dx y \,\dx s
            \leq C (r+L)^d |x|^{-(d+\alpha)}t.
        \end{aligned}
    \end{equation*}
    
    On the other hand, if the jump has size $\in[1,|x|/3]$, then $X$ had to be in $B_{|x|/3+r+L}(x)\subset B_{|x|/2}(x)$ prior to the jump. In particular, this has distance $\geq|x|/2$ to the origin.
    By the assumption $r+L\leq |x|/6$, we get $|x|/2 \geq |x|/3 + r + L$. Thus, using the heat-kernel estimates on $X$, \Cref{thrm:heatkernel}, as well as the finite jump rate of $X'$, we have with constants $C$ changing from line to line
        \begin{align*}
            \P\big(X &\text{ has jump of size} \in [1,|x|/3] \text{ into } B_{r+L}(x)\big) 
            \leq \E\big[\#\{\text{jumps of size}\in [1,|x|/3] \text{ into }B_{r+L}(x)\}\big]\\
            &=\int_0^t \int_{\R^d}  p(s,o,y) \nu \int_{\R^d}\I\{y+z\in B_{r+L}(x)\}\I\{1\leq |z|\leq |x|/3\} \mathcal{J}(y,y+z) \I\{|z|\geq1\}\dx z\,\dx y \,\dx s\\
            &=  \int_{\R^d} \I\{1\leq |z|\leq |x|/3\}\mathcal{J}(o,z)\dx z
            \int_0^t\int_{B_{|x|/2}(x)\cap B_{r+L}(z-x)} p(s,o,y) \dx y \,\dx s \\
            & \leq C \int_0^t   \int_{B_{|x|/2}(x)\cap B_{r+L}(z-x)} \frac{s}{|y|^{d+\alpha}} \dx y \,\dx s\\
            &\leq C \int_0^t   \int_{B_{|x|/2}(x)\cap B_{r+L}(z-x)} \frac{t}{(|x|/2)^{d+\alpha}} \dx y \,\dx s\\
            &\leq C t^2 |B_{r+L}(o)|  |x|^{-(d+\alpha)} \leq C (r+L)^d |x|^{-(d+\alpha)}t^2,
        \end{align*}
    with $C$ only depending on $d$ and $\alpha$ as before, which finishes the proof.
\end{proof}

\subsection{Proofs for coverage times}\label{subsec:proof-coverage-time}
\begin{proof}[Proof of \Cref{thm:coverage-time} (upper bound)]
    A simple rescaling shows that $M(kA,\eps)=M(A,\eps/k)$. Let us fix $\eps,\delta>0$. By the definition of $\beta$, we have for sufficiently large $k$ 
    $$(\eps^{-1}k)^{\alpha+\delta} \geq M(A,\eps/k).$$
    Thus, we can cover $kA$ if we detect all the $M(kA,\eps)$ many centres of the $\eps$-balls using balls of radius $R-\eps$. Let us write
    $$I(\eps) := \E\big[(R-\eps)^{d-\alpha}\I\{R\ge \eps\}\big]$$
    and note that $I(\eps)\to I(0)=\E R^{d-\alpha}$ as $\eps\downarrow0$. 
    By \Cref{lem:detection-time-is-void-probability,lem:volume-asymptotic-random-radii}
        \begin{align*}
            \P\big(T_\mathrm{cov}(kA)>t\big) &\leq \P(\text{some centre is not covered up to time }t \text{ using smaller radii}) \\
            &\leq M(A,\eps/k) \, \P(o \text{ is not covered using radii } R-\eps)\\
            &\leq M(A,\eps/k) \, \exp\big(-\lambda \E|B_{R-\eps}\Gamma_t|\big)
            \leq M(A,\eps/k) \, \exp\big(-\lambda t\Cap{(\alpha,d)}I(\eps)\big).
        \end{align*}
    Thus, for any $t^*\geq0$, we have
    $$\E T_\mathrm{cov}(kA) = \int_0^\infty \P\big(T_\mathrm{cov}(kA)>t\big)\dx t \leq t^* + \int_{t^*}^\infty M(A,\eps/k) \, \exp\big(-\lambda t\Cap{(\alpha,d)}I(\eps)\big)\dx t.$$
    We want to choose $t^*:=t^*(k)$ such that 
    \begin{equation}\label{eq:proof-coverage-time-upper-bound-choice-t}
        1 = (\eps^{-1}k)^{\beta+2\delta} \exp\big(-\lambda t^*\Cap{(\alpha,d)}I(\eps)\big).
    \end{equation}
    In particular, $t^*\uparrow\infty$ as $k\uparrow\infty$ and
    \begin{equation*}
        \begin{aligned}
                \int_{t^*}^\infty& M(A,\eps/k)  \exp\big(-\lambda t\Cap{(\alpha,d)}I(\eps)\big)\dx t
                ={t^*}\int_{1}^\infty M(A,\eps/k) \exp\big(-\lambda (t^* t)\Cap{(\alpha,d)}I(\eps)\big)\dx t \\
                & = \frac{M(A,\eps/k)}{(\eps^{-1}k)^{\beta+2\delta}} t^* \int_{1}^\infty \exp\big(-\lambda t^*\Cap{(\alpha,d)}I(\eps)\big)^{t-1}\dx t
                = \frac{M(A,\eps/k)}{(\eps^{-1}k)^{\beta+2\delta}} \frac{t^*}{\lambda t^*\Cap{(\alpha,d)}I(\eps)}\\
                &\leq \frac{(\eps^{-1}k)^{\beta+\delta}}{(\eps^{-1}k)^{\beta+2\delta}}\frac{1}{\lambda \Cap{(\alpha,d)}I(\eps)}
                = \frac{(\eps^{-1}k)^{-\delta}}{\lambda \Cap{(\alpha,d)}I(\eps)} \xrightarrow{k\uparrow\infty} 0,
        \end{aligned}
    \end{equation*}
    using~\eqref{eq:proof-coverage-time-upper-bound-choice-t} in the second step.
    Thus, we only need to estimate $t^*(k)$. For this, we can bound
    $$t^*(k) \leq \frac{(\beta+2\delta)\log(\eps^{-1}k)}{\lambda \Cap{(\alpha,d)}I(\eps)} = \frac{(\beta+2\delta)\log(k)}{\lambda \Cap{(\alpha,d)}I(\eps)} + c(\eps)$$
    with $c(\eps)$ not depending on $k$. As $\delta>0$ was arbitrarily chosen, we have that
    $$\lim_{k\uparrow\infty} \frac{\E T_\mathrm{cov}(kA)}{\log(k)} \leq \frac{\beta}{\lambda\Cap{(\alpha,d)}I(\eps)}.$$
    Finally, taking $\eps\downarrow0$ proves the claim.
\end{proof}

Regarding the lower bound, we need an estimate for the hitting times of balls that are very far away. In the case of $\alpha=2$, i.e., Brownian motion, this follows from readily available results, see for example~\cite{ItoMcKean1965}. \Cref{lem:hitting-probability} suffices for our purpose.

\begin{proof}[Proof of \Cref{thm:coverage-time} (lower bound)]
We use the alternative characterisation of the Minkowski dimension, where $A\subset\R^d$ has Minkowski dimension $\beta>0$ if
    $$\beta = \lim_{k\uparrow\infty} \log M(kA)/\log(k),$$
    where still $M(kA)$ is the maximal possible number of disjoint balls of radius $1$ with centres in $kA$.
    By this definition of $\beta$, we have for every $\delta>0$ that
    $$k^{\beta-\delta} \leq M(kA) \leq k^{\beta+\delta},$$
    for all sufficiently large $k$. Thus, we know that $\ell A$ is not covered if at least one of those $M(\ell A)$ many points is not detected. Let us denote the random variable
    $$U_t := \#\{\text{undetected points out of the }M(\ell A) \text{ many up to time }t\ge 0\}.$$
    Clearly, $\P(T_\textrm{cov}>t) \geq \P(U_t>0)$. Fix some small $\eps>0$.
    Let us again consider a special time 
    $$t^*:=t^*(k) = \frac{\beta-\delta-\eps}{\lambda\Cap{(\alpha,d)}\E R^{d-\alpha}}\log k.$$
    Then, we see that
    $$\E T_\textrm{cov}(kA) =\int_0^\infty \P\big(T_\textrm{cov}(kA)>t\big) \dx t 
    \geq \int_0^{t^*} \P\big(T_\textrm{cov}(kA)>t\big) \dx t
    \geq t^* \P\big(T_\textrm{cov}(kA)>t^*\big).$$
    As such, the claim follows by showing $\P\big(T_\textrm{cov}(kA)>t^*\big)\to 1$ as $k\uparrow\infty$.
    By the second-moment method
    $$ \P\big(T_\textrm{cov}(kA)>t^*\big) = \P(U_{t^*}>0) \geq \E|U_{t^*}|^2/\E|U_{t^*}^2|,$$
    where, by stationarity,
    $$\E|U_{t^*}| = M(kA) \exp\big(-\lambda \E|B_R(\Gamma_{t^*})|\big).$$
    We are looking for a suitable upper bound for the denominator. Let us denote the $M(kA)$ many centres of the balls by $x_i$. Then, using \Cref{lem:detection-time-is-void-probability} for 2-point sets, we have
    \begin{equation}\label{eq:proof-undetected-points-second-moment}
        \begin{aligned}
            \E|U_{t^*}^2| &= \sum_{i\leq M(kA)} \sum_{j\leq M(kA)} \P(x_i,x_j \text{ both undetected})\\
    &= \E U_{t ^*} + \sum_{i\leq M(kA)}\sum_{i\neq j} \exp\big(-\lambda \E|B_R(\Gamma_{t^*})\cup B_R(\Gamma_{t^*}^{x_j-x_i})|\big)\\
    &= \E U_{t ^*} + \sum_{i\leq M(kA)}\sum_{i\neq j} \exp\big(-2\lambda \E|B_R(\Gamma_{t^*})|\big)\exp\big(\lambda\E|B_R(\Gamma_{t^*})\cap B_R(\Gamma_{t^*}^{x_j-x_i})|\big).
        \end{aligned}
    \end{equation}
    Thus, we have to deal with $M(kA)^2$ many summands and want to show  $\E|B_R(\Gamma_{t^*})\cap B_R(\Gamma_{t^*}^{x_j-x_i})|\to 0$ in some way. First of all, we may safely disregard $o(M(kA)^2)$ many pairs of vertices. Thus, we exclude all $i,j$ which satisfy
    $$|x_i-x_j| <  M(kA)^{1/d} \big/ \log M(kA)=:L(k).$$
    Now, consider for a fixed $r\ge 0$,
    $$F(t,y,r):= \E\big|B_r(\Gamma_{t})\cap B_r(\Gamma_{t}^{y})\big| = \int_{\R^d}\P\big(o\overset{\Gamma_t}\leadsto B_r(x)\text{ and }o\overset{\Gamma_t}\leadsto B_r(x-y)\big) \dx x,$$
    where $o\overset{\Gamma_t}{\leadsto}B_r(X)$ means that $\Gamma_t$ is a path from $o$ into $B_r(x)$, and split the expected overlap into
    \begin{equation}\label{eq:proof-sausage-overlap-summands}
    \begin{split}
        &\E \big|B_R(\Gamma_{t^*(k)})\cap B_R(\Gamma_{t^*(k)}^{y}) \big| \\
        &= \E\big[F(t^*(k),y,R) \I\{R< L(k)^{\alpha/2d}/4\}\big] + \E\big[F(t^*(k),y,R) \I\{R\ge  L(k)^{\alpha/2d}/4\}\big].
        \end{split}
    \end{equation}
    We will show that each of those summands tends to $0$ as $k\uparrow\infty$. Let us start with the second summand. Using \eqref{eq:self-similarity-alpha-stable} as well as \Cref{lem:Boolean-Model-under-shrinkage} and noting that $R\geq L(k)^{\alpha/2d}/4$ and $L(k)^{\alpha/2d}/4\geq t^*(k)^{1/\alpha}$ for large enough $k$, we get
    \begin{equation*}
        \begin{aligned}
         \E\big[F(t^*(k),y,R) \I\{R\ge  L(k)^{\alpha/2d}/4\}\big]
         &\le \E\big[\E|B_R (\Gamma_{t^*(k)})|\I\{R\ge  L(k)^{\alpha/2d}/4\}\big]\\
            &=
            \E\big[t^*(k)^{d/\alpha}\E|B_{Rt^*(k)^{-1/\alpha}}(\Gamma_{1})|\I\{R\ge  L(k)^{\alpha/2d}/4\}\big]\\
            &\leq
            \E\big[R^d\E|B_{1}(\Gamma_{1})|\I\{R\ge  L(k)^{\alpha/2d}/4\}\big]\\
            &= C \E\big[R^d\I\{R\ge  L(k)^{\alpha/2d}/4\}\big]\xrightarrow{k\uparrow\infty} 0,
        \end{aligned}
    \end{equation*}
    since $\E R^d<\infty$.
    Regarding the first summand in \eqref{eq:proof-sausage-overlap-summands}, we may now restrict ourselves to the case of $R \leq {L(k)^{\alpha/2d}/4}$, for which we will derive uniform upper bounds. Using the Markov property, we can bound
        \begin{align*}
            \P\big( o\overset{\Gamma_t}\leadsto B_R(x),\, o\overset{\Gamma_t}\leadsto B_R(x-y)\big)
            &\leq \P \big(\{\exists s\leq s' \leq t\colon X_s \in B_R(x),\,
            X_{s'}\in B_R(x-y)\}\\
            &\qquad\cup \{\exists s' \leq s\leq t\colon X_s\in B_R(x),\, X_{s'}\in B_R(x-y)\}\big)\\
            &\leq \P\big(\exists s\leq s'\leq s+t\colon X_s\in B_R(x),\, X_{s'}\in B_R(x-y)\big)\\
            &\qquad+ \P\big(\exists  s'\leq s\leq s'+t\colon X_s\in B_R(x),\, X_{s'}\in B_R(x-y)\big)\\
        &\leq \P\big(\exists  s \leq t\colon X_s\in B_R(x)\big) \P\big(\exists s' \leq t\colon Y_{s'}\in B_{2R}(-y)\big)\\
        &\qquad+ \P\big(\exists s \leq t\colon Y_{s}\in B_{2R}(y)\big)\P\big(\exists s' \leq t\colon X_{s'}\in B_R(x-y)\big)\\
            &\leq \P\big(\exists s \leq t\colon X_s\in B_{2R}(x)\big) \P\big(\exists  s \leq t\colon X_{s}\in B_{2R}(-y)\big)\\
            &\qquad+ \P\big(\exists  s \leq t\colon X_{s}\in B_{2R}(y)\big)\P\big(\exists s \leq t\colon X_{s}\in B_{2R}(x-y)\big)\\
            &= 2 \P\big(\exists  s \leq t\colon X_s\in B_{2R}(x)\big)\P\big( \exists  s \leq t\colon X_{s}\in B_{2R}(y)\big).
        \end{align*}
   Now, let us consider the ball around the origin of radius $L(k)^{\alpha/2d}$. Then, by the previous inequality and \Cref{prop:exitTime}, we can bound
    \begin{equation*}
        \begin{aligned}
            \int_{B_{L(k)^{\alpha/2d}}(o)} & \P\big( o\overset{\Gamma_{t^*}}\leadsto B_R(x),\, o\overset{\Gamma_{t^*}}\leadsto B_R(x-y)\big) \dx x 
            \leq \int_{L(k)^{\alpha/2d}} 2\P\big( \exists  s \leq t^{*}\colon X_{s}\in B_{2R}(y)\big) \dx x\\
            &\leq |B_{L(k)^{\alpha/2d}}(o)| 2\P\big(\exists  s \leq t^{*}\colon |X_{s}| \geq L(k)-2r\big)\\
            &\leq |B_{L(k)^{\alpha/2d}}(o)| 2\P\big(\exists  s\leq t^{*}\colon |X_{s}| \geq L(k)/2\big)\\
            &\leq C_d L(k)^{\alpha/2} C \frac{t^*(k)}{L(k)^\alpha}
            \leq C \frac{\log(k)^{1/2}}{M(kA)^{\alpha/2d}}\leq C \frac{\log k}{k^{\alpha(\beta-\delta)/2d}} \xrightarrow{k\uparrow\infty}0,
        \end{aligned}
    \end{equation*}
    with constants $C$ changing from line to line but independent of $k$. Thus, we only need to evaluate the integral in $F(t,y,R)$ on the exterior of $B_{L(k)^{\alpha/2d}}(o)$. By \eqref{eq:hitting-probability} in \Cref{lem:hitting-probability} with $L(k)^{\alpha/2d}>t^*(k)^2$ for sufficiently large $k$, we have
    \begin{equation*}
        \begin{aligned}
             \int_{\R^d\backslash B_{L(k)^{\alpha/2d}}(o)}& \P\big( o\overset{\Gamma_{t^*}}\leadsto B_R(x),\, o\overset{\Gamma_{t^*}}\leadsto B_R(x-y)\big) \dx x\\
             &
            \leq \int_{\R^d\backslash B_{L(k)^{\alpha/2d}}(o)} 2\P\big( \exists  s \leq t^{*}\colon X_{s}\in B_{2R}(x)\big) \dx x\\
            &= \int_{L(k)^{\alpha/2d}}^\infty \ell^{d-1} 2\P\big( \exists  s \leq t^{*}\colon X_{s}\in B_{2R}(\ell e_1)\big) \dx \ell\\
            &\leq C \int_{L(k)^{\alpha/2d}}^\infty \ell^{d-1} \frac{t^*(k)^{d+2}} {(\ell-2R)^{d+\alpha-o(1)}} \dx \ell\\
            &\leq C \int_{L(k)^{\alpha/2d}}^\infty \ell^{d-1}
            \frac{t^*(k)^{d+2}} {(\ell/2)^{d+\alpha-o(1)}} \dx \ell
            \\
            &\leq C t^*(k)^{d+2} \int_{L(K)^{\alpha/2d}}^\infty \ell^{-1-\alpha+o(1)} \dx \ell\\
            & = C \frac{(\log k)^{d+2}}{L(k)^{(\alpha-o(1))\alpha/2d}} \leq C \frac{(\log k)^{d+2}}{k^{(\beta-\delta)(\alpha-o(1))\alpha/2d}} \xrightarrow{k\uparrow\infty}0.
        \end{aligned}
    \end{equation*}
    Thus, the first summand in~\eqref{eq:proof-sausage-overlap-summands} can be upper bounded by
    \[\begin{aligned}
\E\big[F(t^*(k),y,R)&\I\{R\le L(k)^{\alpha/2d}/4\}\big] \\
&\leq C \E\Big[\Big[\frac{\log k}{k^{\alpha(\beta-\delta)/2d}} + \frac{(\log k)^{d+2}}{k^{(\beta-\delta)(\alpha-o(1))\alpha/2d}} \Big]\I\{R\le L(k)^{\alpha/2d}/4\}\Big] \xrightarrow{k\uparrow\infty}0,
 \end{aligned}\]
    as the right-hand side does not depend on $R$. As such, we have proven that the terms in~\eqref{eq:proof-sausage-overlap-summands} are $o(1)$. Plugging this estimate into~\eqref{eq:proof-undetected-points-second-moment}, we get 
    \begin{equation*}
        \begin{aligned}
            \E|U_{t^*}^2| &= \E U_{t ^*} + \sum_{i\leq M(kA)}\sum_{i\neq j} \exp\big(-2\lambda \E|B_R(\Gamma_{t^*})|\big)\exp\big(\lambda\E|B_R(\Gamma_{t^*})\cap B_R(\Gamma_{t^*}^{x_j-x_i})|\big)\\
            &\leq \E U_{t ^*} + \sum_{i\leq M(kA)}\sum_{i\neq j} \exp\big(-2\lambda \E|B_R(\Gamma_{t^*})|\big)\exp\big(\lambda \E[F(t^*,x_j-x_i,R)]\big)\\
            &\leq \E U_{t ^*} + \sum_{i\leq M(kA)}\sum_{i\neq j} \exp\big(-2\lambda \E|B_R(\Gamma_{t^*})|\big)(1+o(1))\\
            &= \E U_{t ^*} + M(kA)^2 \exp(-2\lambda \E|B_R(\Gamma_{t^*})|) (1+o(1))\\
            &= \E U_{t ^*} + (\E U_{t ^*})^2 (1+o(1)),
        \end{aligned}
    \end{equation*}
    as $k\to\infty$. The last step we need to confirm is that $\E U_{t ^*}\to\infty$ as $k\uparrow\infty$ to prove that
    $$\frac{\E|U_{t^*}|^2}{\E|U_{t^*}^2|} \geq \frac{\E|U_{t^*}|^2}{\E|U_{t ^*}|+\E|U_{t^*}|^2(1+o(1))} \to 1.$$
    But, indeed,
    \begin{equation*}
        \begin{aligned}
            \E U_{t ^*} &= M(kA)\exp\big(-\lambda t^* \Cap{(\alpha,d)} (1+o(1)\big) \\
            &= M(kA) \exp\Big(-\lambda \Cap{(\alpha,d)} \E\Big[R^{d-\alpha}\frac{(\beta-\delta-\eps) \log k}{\lambda \Cap{(\alpha,d)} \E R^{d-\alpha}}\Big] \Big)\\
            &\geq k^{\beta-\delta} k^{-\beta+\delta+\eps} = k^\eps \xrightarrow{k\uparrow\infty}\infty.
        \end{aligned}
    \end{equation*}
    Putting everything together indeed yields that
    $$\E T_\textrm{cov}(kA) \geq t^* (1+o(1)) = \frac{\beta-\delta-\eps}{\lambda\Cap{(\alpha,d)} \E R^{d-\alpha} }\log k (1+o(1)),$$
    for every $\delta,\eps>0$, which concludes the proof.
 \end{proof}

\subsection{Proof of percolation time}\label{sec:proof_perc_time}
Recall that we fixed a dimension \(d\ge 2\), an index \(\alpha\in(0,2)\) and a Poisson intensity \(\lambda>\lambda_c\) so that \(\G_t\) contains an unbounded component at each point in time \(t\geq 0\) with probability one. We prepare the general strategy outlined in \Cref{sec:outlook}, and observe that nodes not only move but have heterogeneous radii attached that do not change with time. As such, we sort the nodes of the Poisson point process into finitely many categories depending on the sizes of their radii. These categories will decompose the Poisson point process into finitely many independent Poisson point processes, so we require a decoupling result that applies to all of these processes simultaneously. To this end, let \(M\in\N\) and \(\eta_0^1,\dots,\eta_0^M\) be independent Poisson processes of intensity \(\lambda/M>0\) so that \(\eta_0=\eta_1+\dots+\eta_M\) is a Poisson process of intensity \(\lambda\), where each vertex is independently assigned one of \(M\) marks with probability \(1/M\). We equip each Poisson point with an independent standard isotropic $\alpha$-stable process with \(\alpha\in(0,2)\) and denote by \(\eta^j_t\) the location of the nodes of \(\eta_0^j\) at time \(t\).

\begin{definition}[Good configurations]\label{def:GoodBox}
    Let \(V\in\R\) and \(\xi\in(0,1)\) and consider \(\eta_0=\eta_1+\dots+\eta_M\), an independently marked Poisson process of intensity \(\lambda>0\) with \(M\) marks, where each mark is assigned with probability \(1/M\). 
    We say a vertex configuration in a box $Q_V$ of volume $V$ is \(\xi\)--\emph{good} for \(\eta_0\), if, for all \(j=1,\dots M\), the locations of \(\eta^j_t\cap Q_V\) contain as a subset a Poisson point process of intensity \((1-\xi)\lambda/M\) that is independent of everything else.      
\end{definition}

The next result provides the key argument for the aforementioned decoupling. 

\begin{prop}[Decoupling]\label{prop:decoupling}
    Let \(\alpha\in(0,2)\), \(\varepsilon,\xi\in(0,1)\), and let \(Q=Q_V\) be a volume-\(V\) box. Define for \(i\in\N\) the event
    \begin{equation}\label{eq:goodnessEvent}
        \mathcal{A}_i := \{\text{the box }Q_V\text{ is }\xi\text{--good at time }i\}.
    \end{equation}
    Then, if $V=\omega(\log^{d/\alpha}(t))$, one can pick a function $f(t)=\omega(1)$ depending only on \(\varepsilon,\xi\) such that, for \(t\in\N\) large, we have
    \begin{equation}\label{eq:goodBoxBound}
        \P\Big(\sum_{i=0}^{t-1}\1_{\mathcal{A}_i}>(1-\varepsilon)t\Big)\geq 1-\Big(V^{1/d}+ c\big(\tfrac{t}{\log t}\big)^{1/\alpha}\Big)^d\exp(- t/f(t)).
    \end{equation}
\end{prop}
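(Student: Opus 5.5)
We follow the multiscale coupling strategy of~\cite{Peres2011}, with Gaussian estimates replaced by the heat-kernel bounds of \Cref{thrm:heatkernel} and the Hölder continuity of \Cref{prop:hoelder}. Since the $M$ marked processes $\eta^1,\dots,\eta^M$ are independent Poisson processes of intensity $\lambda/M$ moving independently, we treat each $j$ separately and take a union bound over $j\le M$ at the end, absorbing $M$ into $f$.

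\textbf{One-step coupling.} Tile $\R^d$ into subcubes of side $\ell$, with $\ell^\alpha \ll \Delta$ for a time lag $\Delta$ to be chosen. Condition on the configuration at time $i-\Delta$ and call a subcube \emph{dense} if it contains at least $(1-\xi/4)\lambda\ell^d/M$ points. For a dense cube, each point $x$ in it has density $p(\Delta,x,\cdot)$ at time $i$. By \Cref{prop:hoelder}, with $t_0=\Delta$, this density differs from $p(\Delta,z,\cdot)$ (where $z$ is the cube centre) by at most $c\Delta^{-(d+\kappa)/\alpha}\ell^\kappa$. This is a relative error $O((\ell/\Delta^{1/\alpha})^\kappa)$ on the bulk $|y-z|\lesssim \Delta^{1/\alpha}$, where \Cref{thrm:heatkernel} gives $p\asymp\Delta^{-d/\alpha}$. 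Hence the points of the cube can be coupled, by standard thinning/maximal coupling, so that a Poisson number of them, of mean $(1-\xi/2)\lambda\ell^d/M$, land at i.i.d.\ locations with density $p(\Delta,z,\cdot)$, independently of the past. Superposing over dense cubes gives a Poisson process at time $i$ with intensity $(1-\xi/2)\frac{\lambda}{M}\sum_{\text{dense }z}\ell^d p(\Delta,z,y)$. If all cubes within distance $D\gg\Delta^{1/\alpha}$ of $Q_V$ are dense, this intensity is at least $(1-\xi)\lambda/M$ on $Q_V$. The only loss is the heavy tail: the mass of $p(\Delta,z,\cdot)$ beyond distance $D$ is $O(\Delta D^{-\alpha})$ by \Cref{thrm:heatkernel}, and it must be at most $\xi/4$. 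So $\mathcal A_i$ holds whenever all subcubes in $Q_V+B_D(o)$ are dense at time $i-\Delta$.

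\textbf{Density of subcubes.} At each fixed time, $\eta^j_{i-\Delta}$ is Poisson, so a single subcube fails to be dense with probability $\exp(-c\xi^2\ell^d)$. The number of subcubes to check is $(V^{1/d}+D)^d/\ell^d$, and choosing $D\asymp (t/\log t)^{1/\alpha}$ produces exactly the prefactor in \eqref{eq:goodBoxBound}. To count over $i$, we split $\{0,\dots,t-1\}$ into residue classes mod $\Delta$ and run the coupling successively along each class, so that the event ``$\mathcal A_i$ fails'' is dominated by the union of two events: (a) density failure at time $i-\Delta$, and (b) a non-densified coupling. Event (b) is resampled with fresh randomness at each step, so its failures along a class are dominated by i.i.d.\ Bernoulli variables with small parameter, and a Chernoff bound shows that fewer than $\varepsilon t$ failures occur except with probability $e^{-ct/\Delta}$. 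Event (a) is controlled by the single-time bound plus a union bound over times and cubes. With $\ell^d$ of order $\log t$ times a slowly growing factor, and $\Delta=f(t)$ slowly growing, the total error is $(V^{1/d}+c(t/\log t)^{1/\alpha})^d e^{-t/f(t)}$. The condition $V=\omega(\log^{d/\alpha}t)$ ensures that $Q_V$ is large compared with $\ell$ and $\Delta^{1/\alpha}$, so the Hölder error and the boundary effects are negligible.

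\textbf{Main obstacle.} The delicate point is the heavy-tailed kernel. Unlike the Gaussian case, mass leaking from distance $D$ is only polynomially small, namely $\Delta D^{-\alpha}$. Controlling the leak against the enlargement of the region, which brings in $D^d$ extra cubes, dictates the choice $D\asymp(t/\log t)^{1/\alpha}$ and forces the correction $f=\omega(1)$ rather than a constant. Balancing the three small quantities
\[
(\ell/\Delta^{1/\alpha})^\kappa,\qquad \Delta D^{-\alpha},\qquad e^{-c\ell^d}
\]
simultaneously with the large-deviation count is where we expect the real work to lie. We would first choose $\Delta=f(t)$ and $\ell=\Delta^{1/(2\alpha)}$, and then check the exponents.
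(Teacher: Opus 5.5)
Your one-step coupling (thinning against $p(\Delta,z,\cdot)$ on the bulk, plus a Poisson/binomial comparison) is a workable substitute for \Cref{thrm:mixing}. Your Chernoff bound for event (b) along residue classes is also fine. The argument breaks at event (a).

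Despite the reference to a multiscale strategy, your argument uses a single spatial scale $\ell$ and a single lag $\Delta$. The density events ``all side-$\ell$ subcubes of $Q_V+B_D(o)$ are dense at time $i-\Delta$'' are strongly correlated across $i$, so the only tool you have for them is a union bound. That costs about $t\,(V^{1/d}+D)^d\ell^{-d}e^{-c\xi^2\ell^d}$. With $\ell^d$ of order $\log t$ times a slowly growing factor, this is only $t^{-\omega(1)}$, not $e^{-t/f(t)}$. Moreover, your choice $\ell=\Delta^{1/(2\alpha)}$ with $\Delta=f(t)$ gives $\ell^d=f(t)^{d/(2\alpha)}$, which need not even beat the factor $t$.

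Reaching $e^{-t/f(t)}$ by a union bound forces $\ell^d\gtrsim t/f(t)$. That in turn forces a lag $\Delta\gg\ell^\alpha\gtrsim (t/f(t))^{\alpha/d}$ that is polynomial in $t$, contradicting $\Delta=f(t)$. With such a lag, fresh processes are only decoupled from configurations polynomially far in the past. The independent-trials argument in the proof of \Cref{thm:percTime} would then have only about $t^{1-\alpha/d}$ well-separated trials.

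Nor can you control (a) by iterating your coupling at the fixed scale $\ell$ along a residue class. Conditionally on a worst-case past that is dense at threshold $\beta$, density at threshold $\beta$ cannot be recovered. So each step lowers the guaranteed intensity by a factor $1-\epsilon'$, and over $t/\Delta$ steps these losses are unbounded.

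The missing idea is the paper's hierarchical space-time multiscale scheme.
\begin{itemize}
\item It uses $\kappa=O(\log t)$ scales, with sub-box volumes $v_1>\dots>v_\kappa$ and interval lengths $m_1=t>m_2>\dots$. Each interval is cut into four subintervals, each preceded by a buffer $\Delta_{j-1}$, with $\Delta_j=C'v_j^{\alpha/d}$ and $\Delta_j/m_j=\epsilon/(8\kappa)$.
\item A union bound over all $t$ times is used only at the coarsest scale, where $v_1^{\alpha/d}\asymp t/\kappa$ makes it affordable.
\item At every finer scale, goodness of the $v_j$-boxes throughout $[b,b+m_j)$ is derived via \Cref{thrm:mixing} from goodness of the $v_{j-1}$-boxes at the single time $b-\Delta_{j-1}$. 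The error bound is uniform over the past.
\item Hence the number of bad scale-$j$ intervals is dominated by a binomial variable, and a Chernoff bound gives $\P(D_j^c\cap D_{j-1})\le V_j\exp(-ct\,v_j^{1-\alpha/d})$ up to $\kappa$-dependent factors.
\item Each level degrades the density threshold only by $\xi/(2(\kappa-1))$. This is affordable precisely because every chain of derivations has logarithmic depth.
\end{itemize}

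This also corrects your diagnosis of the main obstacle. The factor $(V^{1/d}+c(t/\log t)^{1/\alpha})^d$ is a bound on $V_1$. Its enlargement $B\sum_i v_i^{1/d}=O(v_1^{1/d})$ is the spatial slack needed to apply \Cref{thrm:mixing} at all scales, starting from the coarsest. The heavy-tail leakage only requires $D\gtrsim(\Delta/\xi)^{1/\alpha}$; it determines neither the prefactor nor the correction $f$, which comes from the multiscale bookkeeping.

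A minor point: the $M$ marks must be good at the same times. So the union bound over marks has to be taken per time step, or with $\varepsilon/M$ per mark.
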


We give the technical proof in the next \Cref{sec:proofDecoupling} and first explain how it can be used to prove our third main result.

\begin{proof}[Proof of \Cref{thm:percTime}.]
 We first consider the percolation time for an immobile node at the origin, i.e.\ \(g\equiv o\) and 
 \[
    T_\perc= T_\perc^o = \inf\{t\geq 0\colon \exists X^i_t \in \Psi_t^\infty \text{ such that }|X^i_t|<R_i\}.
\]
We comment on the more general movements \(g\) at the end of the proof. 

The main step of our proof is to first localise the problem by restricting the model to a large box and an application of the decoupling result \Cref{prop:decoupling} afterwards that allows us to independently resample the model within that box. We start by collecting some preliminaries that bring us in the position to do so.
    
    \paragraph{Step 1: Construction and discretisation.}
        We start by constructing the model on the points of a uniformly-marked Poisson process. To this end, assign each vertex of \(\Psi_0=\{X_0^i\}_{i\in\N}\) an independent mark distributed uniformly on \((0,1)\) that we denote by \(\{U_i\}_{i\in\N}\). Let \(\bar f_R(r)=\P(R>r)\) be the tail distribution function of the radius distribution and \(\bar f_R^{-1}\) its generalised inverse. Then, \(R_i\) has the same distribution as \(\bar f_R^{-1}(U_i)\) and the model can thus be constructed using the uniformly marked Poisson process. Put differently, at each time \(t\), we can write
        \[
            \G_t = \bigcup_{X_t^i} B_{\bar f_R^{-1}(U_i)}(X_t^i).
        \]

        Now, fix \(\lambda>\lambda_c\) to guarantee the existence of an unbounded connected component. We next define a supercritical instance of this model whose radii take only \(M\) different values, each with equal probability, and that thus reduces to an independently-marked Poisson process of intensity \(\lambda\) with \(M\) marks. 

        To this end, let \(\delta\in(0,1)\), \(N\in\N\) large and define the partition 
        \[
            \cP_{N,\delta}=\{\bar{f}_R(N),\bar{f}_R(N)+\delta,\dots,1-\delta,1\}
        \]
        of the mark space $(0,1)$ where we assume without loss of generality that this is possible. We define the truncated Boolean model with radius distribution restricted to $\cP_{N,\delta}$ as follows. We start with the regular Boolean model with radius distribution $\bar f_R$ and remove all vertices with radius larger than $N$ (that is, all marks smaller than $\bar{f}_R(N)$). Next, we discretise the remaining vertex radii by ``rounding down'' the radii with respect to the partition $\cP_{N,\delta}$. More precisely, if $R=\bar{f}_R^{-1}(U)$ is the radius of a given vertex and $U\in (n\delta,(n+1)\delta]$, we set $R$ to \(\bar f_R^{-1}((n+1)\delta)\). We denote the corresponding Boolean model by \(\G_0^{N,\delta}\). It is clear that \(\G_0^{N,\delta}\subset \G_0\) and the same remains true at all points in time \(t\) since the nodes in both models follow the same trajectories (in fact both models only differ in the associated radius distribution). Another immediate consequence of this construction is that if the discretised model \(\G_0^{N,\delta}\) is supercritical, the original model \(\G_0\) is supercritical as well. In fact, when looking at the region occupied by the discretised model, it is strictly contained in the region occupied by the original model. Furthermore, this monotonicity trivially extends to the infinite component of the discretised model; if such a component exists, it is by necessity fully contained in the infinite component of the original model.
        
        Finally, observe that \(\G_0^{N,\delta}\) is constructed from a Poisson process where each vertex gets independently assigned one of 
        \[
            M=\max\{n\colon \bar f_R(N)+n\delta\leq 1\}
        \] 
        marks, each getting assigned with probability \(1/M\). In the following, we may always use the superscripts \(N,\delta\) to indicate that the quantity in question refers to the truncated model. 

        \paragraph{Step 2: The discretised model is supercritical.}
        Let us argue that the discretised model is supercritical if \(N\) is chosen large and \(\delta\) is chosen small enough. That is, \(\lambda>\lambda_c^{N,\delta}\). Indeed, we first observe that, by the truncation result~\cite[Corollary~1.6]{dembin2022sharpess}, for each \(\lambda>\lambda_c\), there exists \(N\) sufficiently large such that the truncated Boolean model, where all radii larger than \(N\) are removed, is supercritical still. Since this truncated model is of finite range, we may afterwards apply the approximation result \cite[Theorem~3.7]{Meester1996} to infer that for \(\delta\) sufficiently small, we still have \(\lambda>\lambda_c^{N,\delta}\), as claimed. 
    
    \paragraph{Step 3: Applying the decoupling result.}
    We now work exclusively in the supercritical discretised model, i.e., \(N\) and \(\delta\) are chosen appropriately. Note here that monotonicity of the unbounded component implies
    \(
        T_{\perc}\leq T_{\perc}^{N,\delta}.
    \)
    Now, pick a box volume \(V=V_t\geq t^{d/(d-1)}\geq N^d\) and consider the box \(Q_V\). Note that no ball can cover the whole box by the truncation in place. Fix a \(\xi>0\) so small that \((1-\xi)\lambda>\lambda_c^{N,\delta}\), fix some \(\varepsilon\in(0,1)\), and denote, for \(t\in\N\), by \(\Ecal_t\) the event that
     \[
     	\sum_{i=0}^{t-1}\1_{\mathcal{A}_i}>(1-\varepsilon)t, 
     \] 
     where \(\mathcal A_i\) is the event that the box is good at time \(i\), cf.~\eqref{eq:goodnessEvent}. Furthermore, we denote by \(\C^{N,\delta}_s(Q_V)\) the largest connected component of \(\G^{N,\delta}_s\cap Q_V\), and define for each \(j\in\N\) the event 
	\[
		\Hcal_j := \big\{|\C^{N,\delta}_j(Q_V)|\geq \rho V \text{ and } \C^{N,\delta}_j(Q_V)\subset \G^{(N,\delta), \infty}_j\big\}.
	\]
	 Put differently, \(\Hcal_j\) is the event that, at time \(j\), the box \(Q_V\) contains a component of linear proportion \(\rho\) that intersects with the unbounded component of the discretised model. Indeed, \(\C^{N,\delta}_j(Q_V)\) is the \emph{unique} giant component of asymptotic size \(\theta^{N,\delta}(\lambda):=\P(o\leftrightarrow\infty \text{ in }\G_0^{N,\delta})\), whose existence and uniqueness are implied by super-criticality and the truncation of radii that makes the results of~\cite{Penrose1996} applicable. Choosing \(\rho=\theta^{N,\delta}((1-\xi)\lambda)/2\), we thus infer from~\cite[Theorem~1]{Penrose1996}, that
	\begin{equation*}
        \P(\Hcal_j^c)\leq \exp(-c V^{(d-1)/d})\leq \exp(-c t),	    
	\end{equation*}
	noting that \(0<\theta^{N,\delta}((1-\xi)\lambda)\leq \theta^{N,\delta}(\lambda)\) by our choice of \(\xi\), and that the constant in the exponential bound depends on \(\lambda\) and \(\xi\). Combining these observations yields for any \(t\in\N\) large enough,
	\[
		\begin{aligned}
			\P_o(T_\perc\geq t)
			&
				\leq \P_o\Big(\Ecal_t\cap \bigcap_{j=0}^{t-1}\big(\{o\not\in\C^{N,\delta}_j(Q_V)\}\cap\{|\C^{N,\delta}_j(Q_V)|\geq \rho V\}\big)\Big) + \exp(-c t/f(t)),
		\end{aligned}
	\]
	by \Cref{prop:decoupling}, where the constant \(c\) is chosen sufficiently small according to our choices for \(\xi\), \(\varepsilon\), and \(f\). To bound the remaining probability, observe that, on \(\Ecal_t\), there exist \(k=\lfloor(1-\varepsilon)t\rfloor\) many time steps \(\tau_1,\dots,\tau_k\), where there exist independent Poisson processes \(\eta^1,\dots,\eta^k\) in \(Q_V\) independently marked with the \(M\) marks of equal probability, of intensity \((1-\xi)\lambda>\lambda_c^{N,\delta}\), such that \(\eta^i\subset \Psi^{N,\delta}_{\tau_i}\cap Q_V\). Moreover, \(\eta^i_j\), the restriction of \(\eta^i\) to those vertices with mark \(j\) form an independent Poisson process of intensity \((1-\xi)\lambda/M\), which is a subset of those vertices of the original process at time \(\tau_i\) also restricted to mark \(j\). Let us write \(\Ccal(\eta^i)\) for the largest connected component constructed inside \(Q_V\) on the points of \(\eta^i\). Clearly, \(\Ccal(\eta^i)\subset \C^{N,\delta}_{\tau_i}(Q_V)\). Moreover, as \((1-\xi)\lambda>\lambda_c(N,\delta)\), with high probability, \(\Ccal(\eta^i)\) is the giant component of \(\eta^i\) of size \(\theta_{N,\delta}((1-\xi)\lambda)\), implying \(|\Ccal(\eta^i)|\geq \rho V\) with probability approaching one, as \(t\to\infty\), by choice of \(\rho\). Using the independence of all \(\eta^i\), we thus infer for \(t\) large enough
    \begin{equation*}
		\begin{aligned}
			\P_o\Big(\Ecal_t\cap &\bigcap_{j=0}^{t-1}\big(\{o\not\in\C_{N,\delta}^V(j)\}\cap\{|\C_{N,\delta}^V(j)|\geq \rho V\}\big)\Big)\\
			&
				\leq \Big(\P_o\big(o\not\in \Ccal(\eta^1), |\Ccal(\eta^1)|\geq \rho V\big)+\P(\Ccal(\eta^1)<\rho V)\Big)^k
			\\ &
				\leq \big((1-\rho)+o(1)\big)^k \leq \exp(-c t),
		\end{aligned}
    \end{equation*}
	for some \(c\) that, again, only depends on \(\lambda\) and \(\xi\), proving the statement for an immobile origin.

    To generalise the proof to any \(g\) satisfying the assumption of the theorem, note that we only require that the target node is in $Q_V$ at the observation times $\{\tau_1,\dots,\tau_k\}$, as its precise location does not affect the argument. This is ensured if $g[0,t]\subset Q_V$. However, in order to obtain the desired tail bound, we require (cf.\ \Cref{prop:decoupling}) that $V^{1/d}$ is of order at most $\exp(o(t/f(t)))$.
\end{proof}

\subsection{Proof of the decoupling} \label{sec:proofDecoupling}
In order to prove the crucial decoupling result \Cref{prop:decoupling}, let us collect two preliminaries on soft local times and implied mixing properties. 

For this, we start by stating a result from~\cite{Popov2015}, which will let us couple the locations of our particle system after they have moved with an independent Poisson point process on a locally compact and Polish metric space $\mathbb{G}$.

\begin{prop}[Soft local times]\label{prop-softLocalTimes}
	Let $J$ be an at most countable index set and let \(\{Z_j\}_{j\in J}\) be a collection of points distributed independently on $ \mathbb{G}^d $ according to a family of probabilities, given by \(g_j\colon\mathbb{G}^d\rightarrow\mathbb{R}\), \(j\in J\). Define for all \(y\in \mathbb{G}^d\) the soft local-time function \(H_J(y):=\sum_{j\in J}\xi_jg_j(y)\), where the \(\xi_j\) are i.i.d.\ exponential random variables of mean \(1\). Let \(\psi\) be a Poisson point process on $ \mathbb{G}^d $ with intensity measure \(\rho\colon\mathbb{G}^d\rightarrow\mathbb{R}\) and define the event
	\(
	E:=\{\psi\subseteq\{Z_j\}_{j\in J}\},
	\)
    i.e., the particles belonging to $\psi$ are a subset of $\{Z_j\}_{j\in J}$.
	Then, there exists a coupling $\mathbb{Q}$ of \(\{Z_j\}_{j\in J}\) and \(\psi\), such that
	\[
		\mathbb{Q}(E)\geq\mathbb{Q}\big(H_J(y)\geq\rho(y),\;\forall y\in \mathbb{G}^d\big).
	\]
\end{prop}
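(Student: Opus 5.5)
The plan is to reproduce the construction of~\cite{Popov2015}: realise everything on a single Poisson point process in an enlarged space, so that the $Z_j$ are \emph{selected} points of that process and $\psi$ is the set of its points lying below the graph of $\rho$. Fix a reference measure $\mu$ on $\mathbb{G}^d$ (the Lebesgue measure in our application) with respect to which the $g_j$ and $\rho$ are densities. Let $\Lambda=\{(z_k,v_k)\}_k$ be a Poisson point process on $\mathbb{G}^d\times[0,\infty)$ with intensity $\mu\otimes\mathrm{d}v$. Then $\psi:=\{z_k\colon v_k\le \rho(z_k)\}$ is a Poisson point process with intensity $\rho$, by the mapping theorem. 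This defines $\psi$ within the coupling $\mathbb{Q}$.

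Next, the $Z_j$ are built inductively from $\Lambda$ along an enumeration $j_1,j_2,\dots$ of $J$, starting from $G_0\equiv 0$. At step $n$ set
\[
\xi_{j_n}:=\inf\big\{s\ge 0\colon \exists (z_k,v_k)\in\Lambda \text{ unused with } v_k\le G_{n-1}(z_k)+s\,g_{j_n}(z_k)\big\}.
\]
Let $Z_{j_n}:=z_{k^*}$, where $(z_{k^*},v_{k^*})$ is the (a.s.\ unique) point attaining the infimum, and update $G_n:=G_{n-1}+\xi_{j_n}g_{j_n}$. By construction $G_n=H_{\{j_1,\dots,j_n\}}$. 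Moreover, every point of $\Lambda$ with $v_k<G_n(z_k)$ has been used by step $n$: a point strictly below the current graph would have been hit earlier by the rising curve. This is the geometric heart of the argument.

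The key lemma is distributional. Conditionally on the first $n-1$ steps, the points of $\Lambda$ strictly above the graph of $G_{n-1}$ again form a Poisson process with intensity $\mu\otimes\mathrm{d}v$ on that region. This holds because the region below the graph is a stopping set, so the strong Markov property for Poisson processes applies. Given this, the number of points in $\{(z,v)\colon G_{n-1}(z)<v\le G_{n-1}(z)+s\,g_{j_n}(z)\}$ is Poisson with mean $s\int g_{j_n}\,\mathrm{d}\mu=s$. Hence $\xi_{j_n}\sim\mathrm{Exp}(1)$ is independent of the past, and the location $Z_{j_n}$ of the first point has density $g_{j_n}$ and is independent of $\xi_{j_n}$ and of the previous steps. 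Thus $(Z_j)_{j\in J}$ has the prescribed joint law and the $\xi_j$ are i.i.d.\ mean-one exponentials, as required in the definition of $H_J$. I expect this step, namely the stopping-set/strong Markov property of the Poisson process and the independence of $Z_{j_n}$ from $\xi_{j_n}$, to be the main technical obstacle. I would handle it by discretising $s$ and using the independence of the Poisson process on disjoint regions, then passing to the limit.

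Finally, for countable $J$ let $n\to\infty$. Then $G_n\uparrow H_J$ pointwise, and every point of $\Lambda$ with $v_k<H_J(z_k)$ has been selected, i.e.\ lies in $\{Z_j\}_{j\in J}$. On the event $\{H_J(y)\ge\rho(y)\ \forall y\}$, every point of $\psi$ has $v_k\le\rho(z_k)\le H_J(z_k)$. Since $\Lambda$ a.s.\ has no point exactly on the graph of $H_J$ (a $\mu\otimes\mathrm{d}v$-null set, argued conditionally on $H_J$), every such point is selected. Hence $\psi\subseteq\{Z_j\}_{j\in J}$, which gives $\mathbb{Q}(E)\ge\mathbb{Q}(H_J\ge\rho)$.
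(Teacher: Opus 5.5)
This is the same route as the paper. The paper proves the proposition only by citing the soft-local-time construction of \cite[Section~4, Corollary~4.4]{Popov2015} (in the form of \cite[Corollary~A.3]{Hilario2015}), and your proposal reconstructs exactly that construction. That means the Poisson process on $\mathbb{G}^d\times[0,\infty)$, the rising curves $G_{n-1}+s\,g_{j_n}$, and the restart property of the unused points above the graph of $G_{n-1}$. The restart property gives i.i.d.\ $\mathrm{Exp}(1)$ variables $\xi_j$ and independent $Z_j$ with densities $g_j$. Finally, $\psi$ is realised as the projection of the points below the graph of $\rho$. The argument is correct apart from one misstatement in your last step.

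The claim that $\Lambda$ a.s.\ has no point on the graph of $H_J$ is false. Each selected point satisfies $v_{k^*}=G_n(z_{k^*})$ at its selection step. So for finite $J=\{j_1,\dots,j_N\}$, the point selected last lies exactly on the graph of $G_N=H_J$, almost surely. The null-set argument ``conditionally on $H_J$'' is not legitimate, because $H_J$ is a function of $\Lambda$; that dependence is precisely why points sit on its graph.

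You do not need this claim. Since $\rho$ is deterministic, a.s.\ no point of $\Lambda$ lies on the graph of $\rho$. Hence, on $\{H_J\ge\rho\}$, every point of $\psi$ has $v_k<\rho(z_k)\le H_J(z_k)=\lim_n G_n(z_k)$. So $v_k<G_n(z_k)$ for some finite $n$, and the point was selected by step $n$. Equivalently, you can define $\psi$ via the strict inequality $v_k<\rho(z_k)$, which is still Poisson with intensity $\rho$.
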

\begin{proof}
	The coupling is introduced in~\cite[Section~4]{Popov2015} and proven in~\cite[Corollary~4.4]{Popov2015}. A reformulation of the construction for particles on a graph can be found in~\cite[Appendix~A]{Hilario2015}, and our claim corresponds to~\cite[Corollary~A.3]{Hilario2015}.
\end{proof}

\begin{theorem}[Mixing]\label{thrm:mixing}
	There exist constants \(c_0\), \(c_1\), \(C>0\) such that the following holds.
	Fix large enough \(K>\ell>0\), \(\epsilon\in(0,1)\). Consider the cube \(Q_K\subset\R^d\) tessellated into subcubes \((T_i)_{i}\subset Q_K\) of side length \(\ell\).  
	Let \((x_j)_{j}\subset  Q_{K}\) be the locations at time \(0\) of a collection of particles, such that each subcube \( T_i\) contains at least \(\beta|T_i|\) particles for some \(\beta>0\).
	Assume that $\ell$ is sufficiently large so that \(\lfloor\beta|T_i|\rfloor>0\) for all subcubes \(T_i\).
	Let \(\Delta\geq c_0\ell^\alpha\epsilon^{-4/\kappa}\) where \(\kappa\) is as in \Cref{prop:hoelder}.
	For $J$ being the index set of all particles in $Q_K$ and $j\in J$, denote by \(Y_j\) the location of the \(j\)-th particle at time \(\Delta\). 
	Fix \(K'>0\) such that \(K-K'\geq c_1{\Delta}^{1/\alpha}\epsilon^{-1/d}\). Then, there exists a coupling \(\mathbb{Q}\) of an independent Poisson point process \(\psi\) with intensity measure \(\zeta(y)=\beta(1-\epsilon)\), \(y\in \R^d\), and \((Y_j)_{j\in J}\) such that within \( Q_{K'}\subset  Q_K\), \(\psi\) is a subset of \((Y_j)_{j\in J}\) with probability at least
	\[
		1-|Q_{K'}|\exp\big(-C\beta\epsilon^2\Delta^{d/\alpha}\big).
	\]
\end{theorem}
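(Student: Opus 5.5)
We plan to apply the soft local times coupling of \Cref{prop-softLocalTimes} with $\mathbb{G}^d=\R^d$. Here $Z_j=Y_j$ is the position at time $\Delta$ of the $j$-th particle, and its density is $g_j(y)=p(\Delta,x_j,y)$. The target intensity is $\rho\equiv\beta(1-\epsilon)$, restricted to $Q_{K'}$; outside $Q_{K'}$ we take it to be zero. \Cref{prop-softLocalTimes} then reduces the claim to bounding from below
\[
\mathbb{Q}\big(H_J(y)\ge \beta(1-\epsilon)\ \forall y\in Q_{K'}\big),\qquad H_J(y)=\sum_{j\in J}\xi_j\,p(\Delta,x_j,y),
\]
where the $\xi_j$ are i.i.d.\ mean-one exponentials. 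We will show that $\E H_J(y)\ge\beta(1-\epsilon/2)$ uniformly on $Q_{K'}$, that $H_J(y)$ concentrates around its mean at rate $\exp(-C\beta\epsilon^2\Delta^{d/\alpha})$ at each fixed $y$, and that we can pass from a finite net of points to all of $Q_{K'}$ using Hölder continuity.

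\textbf{Expectation.} For fixed $y\in Q_{K'}$ we have $\E H_J(y)=\sum_j p(\Delta,x_j,y)$. In each subcube $T_i$ we keep exactly $\lfloor\beta|T_i|\rfloor$ particles, which can only decrease $H_J$. We then compare $\sum_{x_j\in T_i}p(\Delta,x_j,y)$ with $\beta\int_{T_i}p(\Delta,x,y)\,\dx x$. By \Cref{prop:hoelder} at time $\Delta$, the kernel varies inside a cube of side $\ell$ by at most $c\Delta^{-(d+\kappa)/\alpha}\ell^\kappa$. Summed over all cubes, the error is relative to the mass $\Delta^{-d/\alpha}$ of order $(\ell/\Delta^{1/\alpha})^\kappa\le c_0^{-\kappa/\alpha}\epsilon^{4/\alpha}$, which is small by the choice of $\Delta$. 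It is cleaner to restrict the comparison to cubes within distance $\Delta^{1/\alpha}\epsilon^{-1/d}$ of $y$ and to bound far cubes trivially from below by zero. It then remains to check that $\beta\int_{Q_K}p(\Delta,x,y)\,\dx x\ge \beta(1-\epsilon/4)$. This holds because the missing mass is $\P(|X_\Delta|\ge K-K')$, and by \Cref{thrm:heatkernel} this is at most $C\Delta(K-K')^{-\alpha}\le C c_1^{-\alpha}\epsilon^{\alpha/d}$. Choosing $c_1$ large makes this at most $\epsilon/4$; the exponent mismatch here is absorbed by taking the constants large. The flooring loss $\lfloor\beta|T_i|\rfloor\ge\beta|T_i|(1-o(1))$ is handled by the assumption that $\ell$ is large.

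\textbf{Concentration.} For fixed $y$ we apply a Chernoff bound to the weighted sum of exponentials:
\[
\P\big(H_J(y)\le (1-\epsilon/4)\E H_J(y)\big)\le \exp\Big(-c\,\epsilon^2\,\frac{\E H_J(y)}{\max_j p(\Delta,x_j,y)}\Big).
\]
The upper bound $p\le C\Delta^{-d/\alpha}$ from \Cref{thrm:heatkernel} turns the exponent into $-c\beta\epsilon^2\Delta^{d/\alpha}$.

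\textbf{Union bound and the main obstacle.} We take a net of $Q_{K'}$ with mesh $\epsilon^{c}\Delta^{1/\alpha}$. The Hölder bound in \Cref{prop:hoelder} controls the fluctuation of $H_J$ between net points by $\epsilon\beta/8$, after multiplying by $\sum_j\xi_j$ restricted to the relevant region, whose size is itself controlled by the same Chernoff estimate. A union bound over the at most $|Q_{K'}|$ net points (the factor $\epsilon^{-c}\Delta^{-d/\alpha}\le1$ for large $\Delta$) then gives the stated probability. We expect this last step to be the main obstacle. The Hölder estimate must be uniform in the random weights, which requires a companion upper-tail concentration for the local sums of the $\xi_j$. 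The constants $c_0,c_1$ and the exponent $4/\kappa$ need to be tuned so that the discretisation error, the tail loss outside $Q_K$, and the Chernoff error together stay below $\epsilon\beta$.
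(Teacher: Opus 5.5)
Your outline follows the paper's own argument: soft local times, a Chernoff/Laplace bound at fixed $y$ using $p(\Delta,\cdot,\cdot)\le C\Delta^{-d/\alpha}$, a Hölder comparison of $\sum_j p(\Delta,x_j,y)$ with $\beta\int p(\Delta,x,y)\,\mathrm{d}x$ over cubes within distance $L$ of $y$, and an escape bound for the lost mass. But the step ``the exponent mismatch is absorbed by taking the constants large'' is a genuine gap.

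\textbf{The $\epsilon$-dependence cannot be absorbed.} The constants $c_0,c_1,C$ are fixed before $\epsilon$. Since $\alpha<2\le d$, the requirement $Cc_1^{-\alpha}\epsilon^{\alpha/d}\le\epsilon/4$ fails as soon as $\epsilon<(4Cc_1^{-\alpha})^{d/(d-\alpha)}$.

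No sharper estimate rescues this, because the loss is real. Take exactly $\beta|T_i|$ particles per cube (with $\beta\ell^d\in\N$), $K-K'=c_1\Delta^{1/\alpha}\epsilon^{-1/d}$ and $K'=2\Delta^{1/\alpha}$. Let $S\subset Q_{K'}$ be the slab of width $\Delta^{1/\alpha}$ along one face, with outer normal $e$.
\begin{itemize}
\item We have $\E\#\{j\colon Y_j\in S\}=\E\#\{j\colon x_j\in S-X_\Delta\}$.
\item The set $S-z$ misses $Q_K$ whenever $\langle z,e\rangle<-(K-K')$; otherwise it contains at most $\beta|S|(1+O(\ell\Delta^{-1/\alpha}))$ of the $x_j$.
\item By symmetry and the lower heat-kernel bound, $\P(\langle X_\Delta,e\rangle<-r)\ge c\Delta r^{-\alpha}$ for $r\ge\Delta^{1/\alpha}$.
\item Hence $\E\#\{j\colon Y_j\in S\}\le\beta|S|(1-c\,c_1^{-\alpha}\epsilon^{\alpha/d}+o(1))<\beta(1-2\epsilon)|S|$ for small fixed $\epsilon$ and large $\Delta$.
\end{itemize}
Now $\#\{j\colon Y_j\in S\}$ is a sum of independent indicators, and $\psi(S)$ is Poisson with mean $\beta(1-\epsilon)|S|$, where $|S|\asymp\Delta^{d/\alpha}$. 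So every coupling has $\mathbb{Q}(\psi(S)\le\#\{j\colon Y_j\in S\})\le 2\exp(-c\beta\epsilon^2\Delta^{d/\alpha})\to 0$. The claimed bound $1-2^d\Delta^{d/\alpha}\exp(-C\beta\epsilon^2\Delta^{d/\alpha})$ instead tends to $1$.

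So the statement with slack $\epsilon^{-1/d}$ is false for $\alpha<2$. That slack is harmless for Brownian motion, where the tail is Gaussian. The paper's proof has exactly the same hole: it asserts $c_4\Delta/(c_1\Delta^{1/\alpha}\epsilon^{-1/d}-\sqrt{d}\ell)^\alpha\le\epsilon/4$ ``for $c_1$ large enough with respect to $c_4$''.

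What your argument does prove is the version with $K-K'\ge c_1\Delta^{1/\alpha}\epsilon^{-1/\alpha}$ and truncation radius $L=c_1\Delta^{1/\alpha}\epsilon^{-1/\alpha}$. Then the Hölder error over $B_L(y)$, from the global bound of \Cref{prop:hoelder}, is of order $\beta\epsilon^{-d/\alpha}(\ell\Delta^{-1/\alpha})^\kappa$. So $\Delta\ge c_0\ell^\alpha\epsilon^{-4/\kappa}$ suffices only when $d+\alpha\le 4$; in general it must be strengthened to $\Delta\ge c_0\ell^\alpha\epsilon^{-(d+\alpha)/\kappa}$.

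\textbf{Passing from fixed $y$ to all of $Q_{K'}$.} Here you are more careful than the paper. The paper writes $\mathbb{Q}(\exists y\in Q_{K'}\colon H_J(y)<\beta(1-\epsilon))\le\int_{Q_{K'}}\mathbb{Q}(H_J(y)<\beta(1-\epsilon))\,\mathrm{d}y$. This is not justified as it stands, since a union bound over an uncountable set cannot simply be replaced by an integral. Your net argument is the right repair, and the obstacle you anticipate is mild. Fix a subfamily $J'$ and a net point $y$. For $|y'-y|\le\delta$ one has $H_J(y')\ge\sum_{j\in J'(y)}\xi_j p(\Delta,x_j,y)-c\Delta^{-(d+\kappa)/\alpha}\delta^\kappa\sum_{j\in J'(y)}\xi_j$. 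The event $\sum_{j\in J'(y)}\xi_j\le 2|J'(y)|$ fails only with probability $\exp(-c\beta L^d)$, which is negligible.

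\textbf{Floor versus ceiling.} Keep $\lceil\beta|T_i|\rceil$ particles per cube, as the paper does; the counts are integers $\ge\beta|T_i|$, so this is always possible. With $\lfloor\beta|T_i|\rfloor$, the hypothesis $\lfloor\beta|T_i|\rfloor>0$ only guarantees a factor $1/2$, not $1-o(1)$.
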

\begin{proof}
    Using \Cref{prop-softLocalTimes}, there exists a coupling $\mathbb{Q}$ of an independent Poisson point process $\psi$ with intensity measure $\beta(1-\epsilon)$ and the locations of the vertices $(Y_j)_{j\in J}$ which are distributed according to the density functions $q_{\Delta}(x_j,y)$, $y\in \R^d$, such that the vertices of $\psi$ are a subset of $(Y_j)_{j\in J}$ with probability at least
    \[
        \mathbb{Q}(H_J(y)\geq \beta(1-\epsilon),\forall y\in Q_{K'}).
    \]
    Here, $H_j(y)=\sum_{j\in J}\xi_j q_{\Delta}(x_j,y)$ and $(\xi_j)_{j\in J}$ are i.i.d.\ exponential random variables with parameter $1$. Considering the converse event and applying Markov's inequality for an arbitrary $\kappa>0$, we get
    \begin{align*}
        \mathbb{Q}\big(\exists y\in Q_{K'}\colon H_J(y)<\beta(1-\epsilon)\big)&\leq \int_{Q_{K'}}\mathbb{Q}\big(H_J(y)<\beta(1-\epsilon)\big)\dx y\leq \int_{Q_{K'}}{\rm e}^{\kappa\beta(1-\epsilon)}\E_{\mathbb{Q}}{\rm e}^{-\kappa H_J(y)}\dx y.
    \end{align*}
    Let $c_1$ be a large positive constant that we will fix later and consider the distance 
    \[
        L:=c_1\Delta^{1/\alpha}\epsilon^{-1/d}.
    \]
    Next, let $J'$ be any subset of $J$ that satisfies the following: for each $T_i$, $J'$ contains exactly $\lceil\beta|T_i|\rceil$ vertices inside $T_i$. If such a subset is not unique, pick one according to an arbitrary ordering of the indices in $J$. Furthermore, define $J'(y)\subseteq J'$ to be the set of $j\in J'$ that satisfy $|x_j-y|\leq L$. Finally, define $H'(y)$ in the same way as $H_J(y)$, but by restricting the sum to $j\in J'(y)$. It follows immediately that $H_J(y)\geq H'(y)$ and so
    \begin{align}
        \E_{\mathbb{Q}}\exp(-\kappa H'(y))&=\prod_{j\in J'(y)}\E_{\mathbb{Q}}\exp(-\kappa q_{\Delta}(x_j,y)\xi_j)=\prod_{j\in J'(y)}(1+\kappa q_{\Delta}(x_j,y))^{-1}\label{eq:taylorApplication}.
    \end{align}
    Using \Cref{thrm:heatkernel} we have that $q_{\Delta}(x,y)\leq C(\Delta^{-d/\alpha}\wedge c_2\Delta|x-y|^{-d-\alpha})$ for some constants $C$ and $c_2$, for all $y\in Q_{K'}$ and all $x\in\bigcup T_i$, where the union runs across all $T_i$ that contain at least one $x_j$ with $j\in J'(y)$. If we set $\kappa= \tilde C\epsilon\Delta^{d/\alpha}$ for $\tilde C := ((1\wedge c_2)4C)^{-1}$, then
    \[
        \sup_{x\in B_{L+\sqrt{d}\ell}(y)}\kappa q_{\Delta}(x,y)\leq \kappa \sup_{x\in B_{L+\sqrt{d}\ell}(y)}C(\Delta^{-d/\alpha}\wedge c_2\tfrac{\Delta}{|x-y|^{d+\alpha}})\leq \kappa C(1\wedge c_2)\Delta^{-d/\alpha}<\epsilon/4.
    \]
    
    Applying the inequality $1+x\geq \exp(x-x^2)$, $|x|\leq1/2$ to \eqref{eq:taylorApplication} and then using the above supremum bound, we calculate
    \begin{align*}
        \prod_{j\in J'(y)}(1+\kappa q_{\Delta}(x_j,y))^{-1}&\leq \prod_{j\in J'(y)}\exp\big(-\kappa q_{\Delta}(x_j,y)(1-\kappa q_{\Delta}(x_j,y))\big)\\
        &\leq \exp\Big(-\sum_{j\in J'(y)}\kappa q_{\Delta}(x_j,y)\big(1-\sup_{x\in B_{L+\sqrt{d}\ell}(y)}\kappa q_{\Delta}(x,y)\big)\Big)\\
        &\leq\exp\Big(-\kappa\sum_{j\in J'(y)} q_{\Delta}(x_j,y)(1-\epsilon/4)\Big).
    \end{align*}
    Once we establish $\sum_{j\in J'(y)} q_{\Delta}(x_j,y)\geq \beta(1-\epsilon/2)$ and use the definition of $\kappa$, the proof will be complete. To that end, define the following association: for each $T_i$ and each $x_j\in T_i$, set $x_j'=\argmax_{w\in T_i}q_{\Delta}(w,y)$. In other words, associate to each particle $x_j$ the location in the subcube $T_i$ containing $x_j$ that maximises $q_{\Delta}(\cdot,y)$. If this choice is not unique, we can choose this location arbitrarily; for example, by choosing the location with the smallest first coordinate. If that choice is still not unique, pick from the remaining candidates the one with the smallest second coordinate, etc. With this choice, a simple application of the triangle inequality gives
\[
    \sum_{j\in J'(y)}q_{\Delta}(x_j,y)\geq \sum_{j\in J'(y)}\big(q_{\Delta}(x_j',y)-|q_{\Delta}(x_j',y)-q_{\Delta}(x_j,y)|\big).
\]
For each $T_i$ that is entirely contained in $B_R(y)$, we have
\begin{align*}
    \sum_{\substack{j\in J'(y)\\x_j\in T_i}}q_{\Delta}(x_j',y)&=\max_{w\in T_i}q_{\Delta}\sum_{\substack{j\in J'(y)\\x_j\in T_i}}1\geq \max_{w\in T_i}q_{\Delta} \beta|T_i|\geq \int_{T_i}\beta q_{\Delta}(z,y)\dx z,
\end{align*}
where the second inequality follows from our definition of the set $J'$ and the fact that if $J'(y)$ contains a single vertex from $T_i$, it must contain all of them. To take advantage of this fact, we consider the set of all locations $z\in \R^d$ such that $|z-y|\leq L-\sqrt{d}\ell$, i.e., $B_{L-\sqrt{d}\ell}(y)$. 
This radius is positive since by definition, $L$ is proportional to $\ell$ and $c_1$ is assumed to be large. By the above discussion, if $z\in B_{L-\sqrt{d}\ell}(y)$, then if a vertex $x_j$ has $x_j'=z$ and $j\in J'$, then necessarily $j\in J'(y)$. Putting everything together, we have
\begin{align*}
    \sum_{j\in J'(y)}q_{\Delta}(x_j',y)&\geq \int_{B_{L-\sqrt{d}\ell}(y)}\beta q_{\Delta}(z,y)\dx z=\beta\int_{B_{L-\sqrt{d}\ell}(y)}q_{\Delta}(y,z)\dx z.
\end{align*}
By \Cref{prop:exitTime},
\begin{align*}
    \beta\int_{B_{L-\sqrt{d}\ell}(y)}q_{\Delta}(y,z)\dx z&\geq \beta\mathbb{P}_y(\tau_{L-\sqrt{d}\ell}\geq \Delta)\geq \beta\big(1-c_4\tfrac{\Delta}{(L-\sqrt{d}\ell)^{\alpha}}\big)\\
    &\geq \beta\big(1-c_4\tfrac{\Delta}{(c_1\Delta^{1/\alpha}\epsilon^{-1/d}-\sqrt{d}\ell)^{\alpha}}\big)\geq \beta(1-\epsilon/4),
\end{align*}
where we have set $c_1$ large enough with respect to $c_4$ for the last inequality to hold.

It remains to bound the term $\sum_{j\in J'(y)}|q_{\Delta}(x_j',y)-q_{\Delta}(x_j,y)|$. Let $I$ be the index set of all subcubes that contain a vertex $x_j$ from the set $(x_j)_{j\in J'(y)}$. Then, applying  \Cref{prop:hoelder}, we have
\begin{align*}
    \sum_{j\in J'(y)}|q_{\Delta}(x_j',y)-q_{\Delta}(x_j,y)|&=\sum_{i\in I}\sum_{\substack{j\in J'(y)\\x_j\in T_i}}|q_{\Delta}(x_j',y)-q_{\Delta}(x_j,y)|\\
    &\leq\sum_{i\in I}\sum_{\substack{j\in J'(y)\\x_j\in T_i}}c\Delta^{-(d+\kappa)/\alpha}(\sqrt{d}\ell)^\kappa\leq\sum_{i\in I}2\beta|T_i| \tilde c \ell^\kappa\Delta^{-(d+\kappa)/\alpha}.
\end{align*}
Finally, we note that $\sum_{i\in I}|T_i|$ is smaller than $\hat c L^d$ for some constant $\hat c$ and so the last expression can be bounded from above by
\(
    2\beta \hat c L^d\ell^\kappa\Delta^{-(d+\kappa)/\alpha}.
\)
Using the definition of $L$ and setting $c_0$ large enough with respect to $\hat c$, this can finally be bounded from above by \(\beta\epsilon/4\).
Putting everything together, this concludes the proof.
\end{proof}

\begin{remark}
    We note that, when $\alpha=2$, we recover the result from \cite{Peres2011} for Brownian motion.
\end{remark}

We now come to the proof of the decoupling result. We will follow a strategy similar to the ones in~\cite{Peres2011} and~\cite{Gracar2022b}. Crucially, although we will consider the vertices with different marks separately, we will make the argument for all of them simultaneously, as we require that all of the Poisson point processes in the superposition are ``well behaved'' for the same $(1-\epsilon)t$ many time steps.
Before we continue, we remind the reader that each of these $M$ point processes has a uniform density of $\lambda/M$ as part of the superposition of combined intensity $\lambda$.  Therefore, although we carefully track the many marked point processes to apply appropriate union bounds, we note that due to their shared intensity, the locations of the vertices behave qualitatively in the same way, regardless of their associated marks.

\begin{proof}[Proof of \Cref{prop:decoupling}]
    Define $\kappa:=(\alpha/\log 2-f_\kappa(t))\log t=O(\log t)$ to be the number of scales we will consider in the upcoming multi-scale argument, where $f_\kappa(t)\in(0,a/\log 2)$ is a (not necessarily strictly) monotonically decreasing function that we will fix later. We begin the argument in a box of volume 
    %$V_1:=V^\alpha$ and conclude at the desired volume $V_\kappa= V$, 
    $V_1>V$ (which we will fix momentarily) and conclude at the desired volume $V_\kappa= V$,
    decreasing the volume at each step of the argument, so that $V_1>V_2>\cdots>V_\kappa$. At each step of the argument, we tessellate the box of volume $V_i$ into sub-boxes of volume $v_i\ll V_i$, where these sub-boxes are similarly decreasing in volume, that is, $v_1>v_2>\cdots>v_\kappa$; see \Cref{fig:spatialtessellation}. For the time being, we impose no assumptions on how $v_i$ should scale with $t$ or $V_i$. In order for our construction to work, we set $V_1:=(V^{1/d}+B\sum_{i=1}^{\kappa-1}v_i^{1/d})^d$ where $B$ is a constant to be fixed later. This choice gives us enough spatial slack to be able to repeatedly apply \Cref{thrm:mixing} and obtain the result for the entire box $Q_V$.

    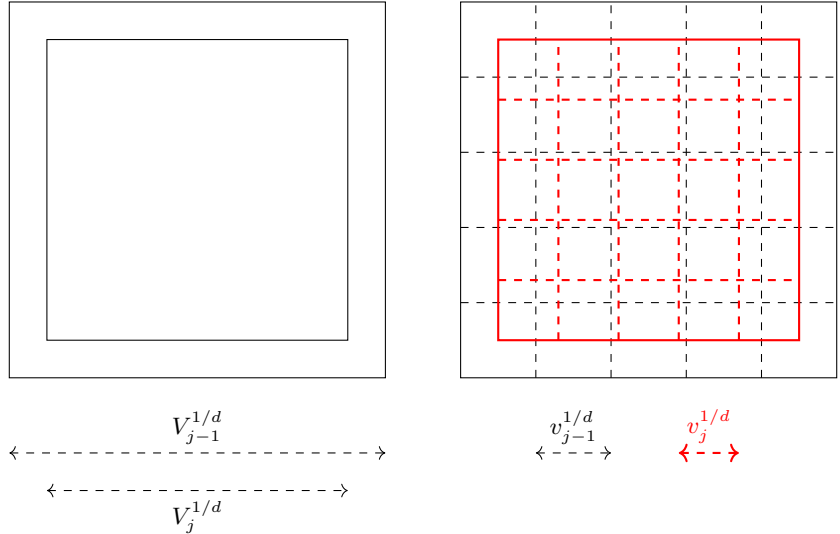
\begin{figure}[!ht]
	\begin{center}
    \begin{tikzpicture}
	\tikzstyle{every node}=[font=\small]
	\draw (0,0) rectangle (5,5);
	\draw[<->,dashed] (0,-1) -- (2.5,-1) node[above] {$V_{j-1}^{1/d}$} -- (5,-1);
	
	\draw (0.5,0.5) rectangle (4.5,4.5);
	\draw[<->,dashed] (0.5,-1.5) -- (2.5,-1.5) node[below] {$V_j^{1/d}$} -- (4.5,-1.5);
	
	\draw (6,0) rectangle (11,5);
	\draw[dashed] (6,1) -- (11,1);
	\draw[dashed] (6,2) -- (11,2);
	\draw[dashed] (6,3) -- (11,3);
	\draw[dashed] (6,4) -- (11,4);
	
	\draw[dashed] (7,0) -- (7,5);
	\draw[dashed] (8,0) -- (8,5);
	\draw[dashed] (9,0) -- (9,5);
	\draw[dashed] (10,0) -- (10,5);
	
	\draw[<->,dashed] (7,-1) -- (7.5,-1) node[above] {$v_{j-1}^{1/d}$} -- (8,-1);
	
	\draw[red,thick] (6.5,0.5) rectangle (10.5,4.5);
	\draw[dashed,red,thick] (6.5,0.5+4/5) -- (10.5,0.5+4/5);
	\draw[dashed,red,thick] (6.5,0.5+8/5) -- (10.5,0.5+8/5);
	\draw[dashed,red,thick] (6.5,0.5+12/5) -- (10.5,0.5+12/5);
	\draw[dashed,red,thick] (6.5,0.5+16/5) -- (10.5,0.5+16/5);
	
	\draw[dashed,red,thick] (6.5+4/5,0.5) -- (6.5+4/5,4.5);
	\draw[dashed,red,thick] (6.5+8/5,0.5) -- (6.5+8/5,4.5);
	\draw[dashed,red,thick] (6.5+12/5,0.5) -- (6.5+12/5,4.5);
	\draw[dashed,red,thick] (6.5+16/5,0.5) -- (6.5+16/5,4.5);
	
	\draw[<->,dashed,thick,red] (6.5+12/5,-1) -- (6.5+28/10,-1) node[above] {$v_{j}^{1/d}$} -- (6.5+16/5,-1);

    \end{tikzpicture}
    \caption{The spatial multi-scale recursion. Note that the difference $V_{j-1}-V_j$ is proportional to the value $v_{j-1}$ by \eqref{eq:v_jm_j} and \eqref{eq:v_geometric}, allowing us to apply \Cref{thrm:mixing}.}\label{fig:spatialtessellation}
    \end{center}
    \end{figure}

    Let $(\xi_j)_{j\in\{1\dots,\kappa\}}$ be a sequence satisfying
    \[
        \xi/2=\xi_1<\xi_2<\cdots<\xi_\kappa=\xi
    \]
    and
    \[
        \xi_j-\xi_{j-1}=\xi/(2(\kappa-1)),\quad\forall j\in\{2,\dots,\kappa\}.
    \]
    We say a sub-box is \emph{good} at a given time step (that is, at a given time $t\in \N$) for scale $j$, if it contains at least $(1-\xi_j)\lambda v_j/M$ vertices for each of the $M$ marks of the point process.

    \paragraph{Step 1: First scale.} We begin at the largest scale $1$ and define $D_1$ to be the event that all sub-boxes of volume $v_1$ inside the box of volume $V_1$ are good for all time steps (that is, integer times) in $[0,t]$. By the stationarity of the Poisson point process of vertices of a given mark, their number inside a fixed sub-box at a fixed time step is a Poisson random variable with mean $\lambda v_1/M$. Using a standard Chernoff bound, we have that with probability larger than $1-\exp(\xi_1^2\lambda v_1/(2M))$, this number is larger than $(1-\xi_1)\lambda v_1/M$. The total number of sub-boxes of volume $v_1$ in $Q_{V_1}$ is $O(V_1)$ by our assumptions on $v_1$. Taking a union bound across all sub-boxes of scale $1$, all time steps in $[0,t]$ and the $M$ marks, we get
    \begin{equation}\label{eq:D_1}
        \P(D_1)\geq 1-MV_1 t\exp\big(-\xi_1^2\tfrac{\lambda}{M} v_1/8\big)\geq 1-V^\alpha\exp\big(-c_1v_1 +\log(t)\big).
    \end{equation}
    
    As we want the probability of $D_1$ to go to $1$ with $t$, this imposes the relationship that $v_1=\omega(\log(t))$.

    \paragraph{Step 2: Smaller scales.} 
    When moving from a larger scale to a smaller one, we will discard some of the time steps and only retain a large proportion for which the vertex behaviour agrees with our requirements. Let $s_j$ be the number of time steps we consider at scale $j$, starting with $s_1=t$, meaning that, as seen above, all of the $t$ time steps were considered. When going from scale $j-1$ to $j$, we will divide the steps into $4$ groups of $m_{j}$ consecutive time steps, starting with $m_1=t$. Put differently, we start with a single interval of $t$ time steps at scale $1$ in agreement with how we treated scale 1 above, which we then subdivide into 4 subintervals when moving to a smaller scale.

    At scale $j-1$, we subdivide a given interval $[b,b+m_{j-1})$ into 4 subintervals, separated from each other by a yet to be determined ``buffer'' of $\Delta_{j-1}$ many time steps, taking the form
    \[
    \big[b+k\Delta_{j-1}+(k-1)m_j,b+k\Delta_{j-1}+km_j\big),\quad k\in\{1,2,3,4\},
    \]
    where $m_j=(m_{j-1}-4\Delta_{j-1})/4$. Intuitively, an interval of scale $j-1$ ends up being subdivided into $4$ subintervals of equal length $m_j$, with each subinterval preceded by $\Delta_{j-1}$ many steps; see \Cref{fig:timeintervals}. We will use these steps to allow for the application of  \Cref{thrm:mixing}. An immediate consequence of the above definitions is that $s_j=s_{j-1}(1-4\Delta_{j-1}/m_{j-1})$, meaning that when moving to a smaller scale, we ``lose'' a fraction $4\Delta_{j-1}/m_{j-1}$ of time steps that will no longer be considered in further subdivisions.

    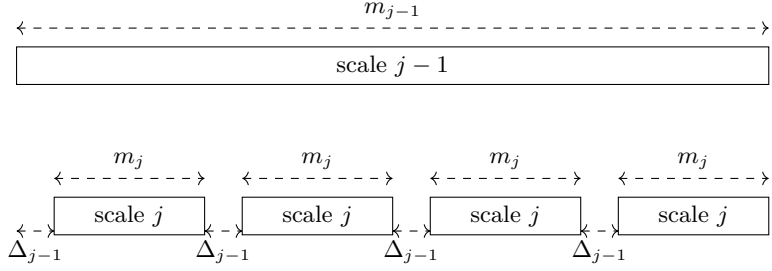
\begin{figure}[!ht]
	\begin{center}
	\begin{tikzpicture}
		\tikzstyle{every node}=[font=\small]
		\draw  (0,0) rectangle  node {\small scale $j-1$} (10,0.5);
		\draw [<->,dashed] (0,0.75) -- (5,0.75) node[above] {$m_{j-1}$} -- (10,0.75);
		\draw [<->,dashed] (0,-1.95) -- (0.25,-1.95) node[below] {$\Delta_{j-1}$} -- (0.5,-1.95);
		\draw  (0.5,-2) rectangle  node {\small scale $j$} (2.5,-1.5);
		\draw [<->,dashed] (0.5,-1.25) -- (1.5,-1.25) node[above] {$m_j$} -- (2.5,-1.25);
		
		\draw [<->,dashed] (2.5,-1.95) -- (2.75,-1.95) node[below] {$\Delta_{j-1}$} -- (3,-1.95);
		\draw  (3,-2) rectangle  node {\small scale $j$} (5,-1.5);
		\draw [<->,dashed] (3,-1.25) -- (4,-1.25) node[above] {$m_j$} -- (5,-1.25);
		
		\draw [<->,dashed] (5,-1.95) -- (5.25,-1.95) node[below] {$\Delta_{j-1}$} -- (5.5,-1.95);
		\draw  (5.5,-2) rectangle  node {\small scale $j$} (7.5,-1.5);
		\draw [<->,dashed] (5.5,-1.25) -- (6.5,-1.25) node[above] {$m_j$} -- (7.5,-1.25);
		
		\draw [<->,dashed] (7.5,-1.95) -- (7.75,-1.95) node[below] {$\Delta_{j-1}$} -- (8,-1.95);
		\draw  (8,-2) rectangle  node {\small scale $j$} (10,-1.5);
		\draw [<->,dashed] (8,-1.25) -- (9,-1.25) node[above] {$m_j$} -- (10,-1.25);
	\end{tikzpicture}
	\caption{The temporal multi-scale recursion.}\label{fig:timeintervals}
	\end{center}
    \end{figure}

    We extend the definition of a good sub-box to time intervals; we say an interval of scale $j$ is \emph{good}, if all of the sub-boxes are good for scale $j$ during the $m_j$ many steps contained in the interval.

    Define the sequence $0=\epsilon_1<\epsilon_2<\cdots<\epsilon_\kappa=\epsilon$ satisfying $\epsilon_j-\epsilon_{j-1}=\epsilon/(\kappa-1)$, and define $D_k$, $k\in\{2,\dots,\kappa-1\}$ to be the event that a fraction of at least $(1-\epsilon_j/2)$ time intervals of scale $j$ are good. For scale $\kappa$, define $D_{\kappa}$ to be the event that for a proportion of at least $(1-\epsilon_{\kappa}/2)$ time intervals of scale $\kappa$, for every mark simultaneously, the locations of the vertices in $Q_{V_{\kappa}}=Q_V$ with a given mark contain as a subset an independent Poisson point process of intensity $(1-\xi_{\kappa})\lambda/M$.

    Recall that we want to obtain that $D_{\kappa}$ occurs for $(1-\epsilon)t$ many time steps, so we require that $(1-\epsilon_\kappa/2)s_\kappa>(1-\epsilon)t$. Calculating, the right-hand side guarantees that when $D_{\kappa}$ holds, at least
    \begin{align*}
        \Big(1-\frac{\epsilon_{\kappa}}{2}\Big)s_\kappa=\Big(1-\frac{\epsilon_\kappa}{2}\Big)s_1\prod_{j=1}^{\kappa-1}\Big(1-\frac{4\Delta_j}{m_j}\Big)\geq \Big(1-\frac{\epsilon}{2}\Big)t\Big(1-\sum_{j=1}^{\kappa-1}\frac{4\Delta_j}{m_j}\Big)
    \end{align*}
    many of the time steps satisfy the requirement of the event $\mathcal{A}_i$. Setting $\Delta_j/m_j=\epsilon/(8\kappa)$, the last expression is larger than $(1-\epsilon)t$ as desired.

    Having set the ratio between the lengths of the time intervals and the small gaps in front of them, we proceed to determine how large $\Delta_j$ needs to be with respect to the volume of the sub-boxes $v_j$, so that particles will have enough time to mix well at that scale. Using \Cref{thrm:mixing} to inform this scaling, we set $v_j$ via the relation
    \begin{equation}\label{eq:Delta_v_relationship}
        \Delta_j=C'v_j^{\alpha/d}
    \end{equation}
    for some sufficiently large constant $C'$. This establishes the relationships
    \begin{equation}\label{eq:v_jm_j}
        \frac{v_j^{\alpha/d}}{m_j}=\frac{\epsilon}{8C'\kappa}
    \end{equation}
    and
\begin{equation}\label{eq:v_geometric}
        v_{j+1}=v_j\Big(\frac{1}{4}-\frac{\epsilon}{8\kappa}\Big)^{d/\alpha}.
    \end{equation}
    As $m_1=t$, this gives $v_1^{\alpha/d}=\frac{\epsilon}{8C'\kappa}t$, which is in agreement with the requirement that $v_1=\omega(\log(t))$. 
    Using the definition of $\kappa$, we also get
    \[
        v_\kappa^{\alpha/d}=\frac{\epsilon}{8C'\kappa}t \Big(\frac{1}{4}-\frac{\epsilon}{8\kappa}\Big)^{(\kappa-1)}.
    \]
    Depending on the choice of $f_\kappa$ in the definition of $\kappa$, this expression can range from $\omega(1)$ (for $f_\kappa(t)$ decreasing to $0$ arbitrarily slowly) to $\Theta(t^{1-c})$,
    where $c$ is any small constant, achieved by setting $f_\kappa(t)$ constant and sufficiently close to $\alpha/\log 2$.
    Consequently, we can ensure that $\lim_{t\to\infty}v_\kappa=\infty$ can be achieved at any desired sublinear rate.

    \paragraph{Step 3: Applying the decoupling theorem.} With the relationship between $\Delta_j$ and $v_j$ established, we proceed to show that if at time $b'=b-\Delta_j$ all subcubes are good for scale $j-1$, and $v_j=\omega(\log t)$, %\todo{condition on $t$ and $v_\kappa$}
     then the interval $[b,b+m_j)$ of scale $j$ is good with probability larger than $1-V_j\exp(-c\lambda v_j+\log(m_j))$ for a positive constant $c$, uniformly across all events that are measurable with respect to the $\sigma$-algebra induced by the particle behaviour up to and including the time $b'$.

    Let 
    \[
        E=\{\text{at time }b'\text{ all subcubes are good for the scale }j-1\}
    \]
    and let $F$ be any event measurable with respect to the above mentioned $\sigma$-algebra. If $E\cap F=\emptyset$, then
    \[
        \P([b,b+m_j)\text{ is not good},E|F)=0,
    \]
    and so the claimed bound is satisfied. Consider therefore the case $E\cap F\neq\emptyset$ and let $\pi_{b'}^u$ be the point process of vertices with mark $u\in\{1,\dots,M\}$, at time $b'$ after conditioning on $E\cap F$. Fix now an arbitrary time $b^*\in[b,b+m_j)$. We proceed to bound the probability
    \[
        \P(\text{at time }b^*\text{ not all subcubes are good for scale }j|E,F).
    \]
    By conditioning on $E$, all subcubes are good for scale $j-1$ at time $b'$. Now, we pick a constant $c_\epsilon$ such that $(1-c_\epsilon)^2(1-\xi_{j-1})=1-\xi_j$, yielding $c_\epsilon=\Theta(\xi_j-\xi_{j-1})$. We also pick a constant $c'$ and set $C'$ from \eqref{eq:Delta_v_relationship} such that
\begin{align}\label{eq:Delta_V_relationship}
        V_j^{1/d}\leq V_{j-1}^{1/d}-c'\Big(\Delta_{j-1}\log\frac{1}{c_\epsilon}\Big)^{1/\alpha},
    \end{align}
    which allows for the application of \Cref{thrm:mixing} with $K=V_{j-1}^{1/d}$ and $K'=V_j^{1/d}$ across each of the $M$ many marks. Each mark $u$ therefore gets a fresh Poisson point process $\Xi^u$ of intensity $(1-c_\epsilon)(1-\xi_{j-1})\lambda/M$, which can be coupled with $\pi_{b^*}^u$ in such a way that $\Xi^u$ is stochastically dominated by $\pi_{b^*}^u$ for all marks $u\in \{1,\dots,M\}$ inside $Q_{V_j}$ with probability at least
    \[
        1-M\exp\big(-c_1c_\epsilon^2\tfrac{\lambda}{M}(1-\xi_{j-1})v_j\big)
    \]
    for some constant $c_1$.

    In order for~\eqref{eq:Delta_V_relationship} to be well defined, we set the constant $B$ from the definition of $V_1$ to be
    \begin{equation}\label{eq:constantB}
        B:=c'(C')^{1/\alpha}(\log(1/c_\epsilon))^{1/\alpha}.
    \end{equation}
    This ensures that~\eqref{eq:Delta_V_relationship} holds with equality for every scale.
    This, the choice $\kappa=O(\log t)$, and~\eqref{eq:Delta_v_relationship} ensure that it is always possible to choose $V_j$ satisfying~\eqref{eq:Delta_V_relationship}, provided $V=\omega(\log^{d/\alpha}(t))$ as assumed in the statement of the theorem.

    Recall that $D_\kappa$ is defined slightly differently compared to the remaining events $D_j$, $j<\kappa$ and that the above bound is in fact already a bound for $D_\kappa$. We therefore focus on the remaining $j<\kappa$. A given sub-box is good for scale $j$ at time $b^*$ if for all marks, $\Xi^u$ contains at least $(1-\xi_j)\lambda v_j/M$ vertices in that sub-box, which, by our choice of $c_\epsilon$ and applying a simple Chernoff and union bound, occurs with probability at least
    \[
    1-M\exp\big(-c_2c_\epsilon^2(1-c_\epsilon)(1-\xi_{j-1})\tfrac{\lambda}{M} v_j\big).
    \]
    Taking a union bound across the time steps in $[b,b+m_j)$ and all sub-boxes of scale $j$, we therefore obtain for some constant $c$ the bound
    \begin{align}\label{eq:interval_good}
        \P([b,b+m_j)\text{ is good},E|F)\geq 1- Mm_j\tfrac{V_j}{v_j}{\rm e}^{-c\lambda v_j/M}\geq 1-V_j{\rm e}^{-c\lambda v_j+\log(m_j)-\log(v_j))}.
    \end{align}

    \paragraph{Step 4: Moving to smaller scale.} Using \eqref{eq:interval_good}, we proceed to bound the probability of the event $D_j^c\cap D_{j-1}$ for $j\geq 2$. When $D_{j-1}$ occurs, there are at least $(1-\epsilon_{j-1}/2)s_{j-1}/m_{j-1}$ intervals that are good for scale $j-1$. By definition, going to scale $j$ means that they will be subdivided into $4(1-\epsilon_{j-1}/2)s_{j-1}/m_{j-1}$ subintervals of scale $j$. In order for $D_j$ to not occur, there should be fewer than $(1-\epsilon_{j}/2)s_j/m_j$ good intervals of scale $j$ among them. Let
    \[
        w=\frac{s_j}{m_j}\Big(\frac{\epsilon_j-\epsilon_{j-1}}{2}\Big)
    \]
    be the difference between these values. If we denote by $Z$ the number of subintervals of $[b,b+m_j)$ of scale $j$ that are not good for scale $j$, but for which the particle configuration at time $b-\Delta_{j-1}$ was good for scale $j-1$, then if $D_{j-1}$ and $D_j^c$ hold simultaneously, then $\{Z\geq w\}$ has to hold.

    By the above, $Z$ can be written as a sum of $\frac{s_j}{m_j}$ indicator variables $\chi_r$, each one representing a scale $j$ time interval. These indicator variables are not independent. However, they are sequential in the sense that the related time intervals follow one after another and do not overlap. Consequently, we can apply the bound from \eqref{eq:interval_good}, which gives the bound $\rho_j$ for the probability of the event $\{\chi_r=1\}$, uniformly across any realisation on the preceding $r-1$ intervals. Using this, $Z$ is stochastically dominated by an independent binomial random variable $Z'$ with parameters $s_j/m_j$ and $\rho_j$. Using a Chernoff bound for a binomial random variable, we obtain
    \begin{align*}
        \P(Z'\geq w)&=\P\big(Z'-\E Z' \geq\tfrac{s_j}{m_j}(\tfrac{\epsilon_j-\epsilon_{j-1}}{2}-\rho_j)\big)\leq\exp\big(-\tfrac{s_j}{m_j}(\tfrac{\epsilon_j-\epsilon_{j-1}}{2})\big(\log(\tfrac{\epsilon_j-\epsilon_{j-1}}{2\rho_j})-1\big)\big).
    \end{align*}
    Recall now that $\epsilon_j-\epsilon_{j-1}=\epsilon/(\kappa-1)$ and $-\log(\rho_j)=\Theta(c \lambda v_j-\log(m_j)-\log(V_j)+\log(v_j))$. Furthermore, recall that $v_j\geq v_\kappa$ and $\kappa=O(\log t)$, which gives
    \[
        \P(Z'\geq w)\leq V_j\exp(-c\tfrac{s_j}{m_j}\tfrac{\epsilon}{\kappa}(c\lambda v_j-\log(m_j)+\log(v_j)).
    \]

    By \eqref{eq:v_jm_j}, we have $v_j^{\alpha/d}/m_j=\epsilon/(8C'\kappa)$ and by definition that $s_j=\Theta(t)$ for all $j$. This yields
    \[
        \P(Z'\geq w)\leq V_j\exp\big(-c\tfrac{\epsilon}{\kappa}\tfrac{\epsilon}{8C'\kappa}t(cv_j^{1-\alpha/d})\big).
    \]
    We conclude that
    \[
        \P(D_j^c\cap D_{j-1})\leq V_j\exp\big(-c t(v_j^{1-\alpha/d})\big).
    \]
    Using $V_j\leq V_1$, $v_j\geq v_\kappa$, and $v_{\kappa}^{\alpha/d}=\omega(f(t))$ for any function $f(t)$ diverting arbitrarily slowly to infinity, we obtain
    \begin{equation}\label{eq:D_j^c}
        \P(D_j^c\cap D_{j-1})\leq V_1\exp\big(-ct/f(t)\big),
    \end{equation}
    where the constant $c$ has been changing throughout the computation.
    We are now ready to conclude the proof, by bounding
    \[
        \P(D_{\kappa}^c)\leq \P(D_\kappa^c\cap D_{\kappa-1})+\P(D_{\kappa-1}^c).
    \]
    Applying this inequality recursively, we get
    \[
        \P(D_{\kappa}^c)\leq \sum_{j=2}^\kappa\P(D_j^c\cap D_{j-1})+\P(D_1^c)
    \]
    and using \eqref{eq:D_1}, \eqref{eq:D_j^c} and recalling the equality $v_1^{\alpha/d}=\frac{\epsilon}{8C'\kappa}t$ and the definition of $\kappa$, we get
    \[
        \P(D_{\kappa}^c)\leq V_1\exp\big(-ct/f(t)\big).
    \]
    Recall that $V_1^{1/d}=V^{1/d}+B\sum_{i=1}^{\kappa-1}v_i^{1/d}$ and that $v_i$ form a geometric sequence via \eqref{eq:v_geometric}, meaning that the sum is $O(v_1^{1/d})$ provided $t$ is large enough for the prefactor in \eqref{eq:v_geometric} to be sufficiently close to $1/4$. As noted immediately following \eqref{eq:Delta_V_relationship}, $v_1^{\alpha/d}=\frac{\epsilon}{8C'\kappa}t$. Combined, we can bound $V_1^{1/d}$ from above by
    \(
        V^{1/d}+cB(t/\kappa)^{1/\alpha}.
    \)
    Using $\kappa=O(\log t)$ concludes the proof.
\end{proof}

\paragraph{Acknowledgement.}
This research was supported by the Leibniz Association within the Leibniz Junior Research Group on {\em Probabilistic Methods for Dynamic Communication Networks} as part of the Leibniz Competition (grant no.\ J105/2020) and by the Deutsche Forschungsgemeinschaft (DFG, German Research Foundation) under Germany's Excellence Strategy -- The Berlin Mathematics Research Center MATH+ (EXC-2046/1, EXC-2046/2, project ID: 390685689) through the project {\em EF45-3} on {\em Data Transmission in Dynamical Random Networks}.

\section*{References}
\phantomsection
\addcontentsline{toc}{section}{References}
\renewcommand*{\bibfont}{\normalsize}
\printbibliography[heading = none]
\end{document}